\documentclass[a4paper,regno]{amsart}
\usepackage[english]{babel}
\usepackage{amsmath}
\usepackage{amsfonts}
\usepackage{amssymb}
\usepackage{mathrsfs}
\usepackage{amsthm}
\usepackage[all]{xy}
\usepackage{graphicx}
\usepackage[usenames]{color}

\newcommand{\R}{{{\mathbb R}}}

\newcommand{\im}{{{\mathfrak{I}}}}

\newcommand{\T}{{{\mathbb T}}}
\newcommand{\OT}{{{\mathcal T}}}

\newcommand{\zz}{{{z_2}}}
\newcommand{\F}{{{\mathcal{F}(z)}}}
\newcommand{\G}{{{\mathcal{F}(h)}}}
\providecommand{\pa}[1]{\partial_{\alpha}^{#1}}
\newcommand{\di}{{{d(z,h)}}}
\providecommand{\abs}[1]{\lvert#1\rvert}
\providecommand{\brz}[1]{BR(\varpi_{1},z)_{#1}}
\providecommand{\brh}[1]{BR(\varpi_{2},h)_{#1}}
\providecommand{\brzep}[1]{BR(\varpi_{1}^{\varepsilon},z^{\varepsilon})_{#1}}
\providecommand{\brhep}[1]{BR(\varpi_{2}^{\varepsilon},h^{\varepsilon})_{#1}}
\providecommand{\norm}[1]{\lVert#1\rVert}
\providecommand{\nor}[1]{\vert\vert\vert#1\vert\vert\vert}

\providecommand{\esth}[1]{\exp C(\norm{\F}^{2}_{L^{\infty}}+\norm{d(z,h)}_{L^{\infty}}^{2}+\norm{z}^{2}_{H^{#1}})}

\providecommand{\estheps}[1]{\exp C(\norm{\mathcal{F}(z^{\epsilon})}^{2}_{L^{\infty}(S)}+\norm{d(z^{\varepsilon},h)}_{L^{\infty}}+\norm{z^{\epsilon}}^{2}_{H^{#1}(S)})}
\theoremstyle{plain}
\newtheorem{lem}{Lemma}[section]
\newtheorem{prop}{Proposition}[section]
\newtheorem{thm}{Theorem}[section]

\theoremstyle{definition}

\title{Local-existence for the Inhomogeneous Muskat problem}
\author{Tania Pernas-Casta\~{n}o}
\date{}

\begin{document}
\maketitle
%\quad \newpage
%\tableofcontents
%\newpage \quad \newpage

\begin{abstract}
In this work we study the evolution of the interface between two different fluids in a porous media with two different permeabilities. We prove local existence in Sobolev spaces, when the free boundary is given by the discontinuity among the densities and viscosities of the fluids. 
\end{abstract}
\section{Introduction}
In this paper, we study the evolution of the interface between two different incompressible fluids with different viscosities and densities, in a porous medium where the permeability is a two dimensional step function. The velocity of a fluid in a porous media is given by the Darcy's law:
\begin{displaymath}
	\frac{\mu}{\kappa}u=-\nabla p-(0,g\rho)
	\end{displaymath}
	where $(x,t)\in\R^{2}\times\R^{+}$, $u=(u_{1}(x,t),u_{2}(x,t))$ is the incompressible velocity (i.e. $\nabla\cdot u=0$), $p=p(x,t)$ is the pressure, $\mu=\mu(x)$ is the dynamic viscosity, $\kappa=\kappa(x)$ is the permeability of the isotropic medium, $\rho=\rho(x)$ is the liquid density and $g$ is the acceleration due to gravity. The free boundary is caused by the discontinuity between the densities and viscosities of the fluids; the quantities $(\mu,\rho)$ are defined by
	\begin{displaymath}
(\mu,\rho)(x_{1},x_{2}) := \left\{ \begin{array}{ll}
(\mu^{1},\rho^{1}) & \textrm{$x\in\Omega_{1}(t)$}\\
(\mu^{2},\rho^{2}) & \textrm{$x\in\Omega_{2}(t)\cup\Omega_{3}=\R^{2}-\Omega^{1}(t)$}
\end{array} \right.
\end{displaymath}
where $\mu^{1}$, $\rho^{1}$, $\mu^{2}$ and $\rho^{2}$ are constants. And, in this work we study the case where the permeability $\kappa(x)$ is a step function separating two regions with different values of the permeability:
\begin{displaymath}
\kappa(x_{1},x_{2}) := \left\{ \begin{array}{ll}
\kappa^{1} & \textrm{$x\in \Omega_{1}(t)\cup\Omega_{2}(t)=\R^{2}-\Omega_{3}$}\\
\kappa^{2} & \textrm{$x\in \Omega_{3}$}
\end{array} \right.
\end{displaymath}

\begin{figure}[htb]
\centering
\includegraphics[width=62mm]{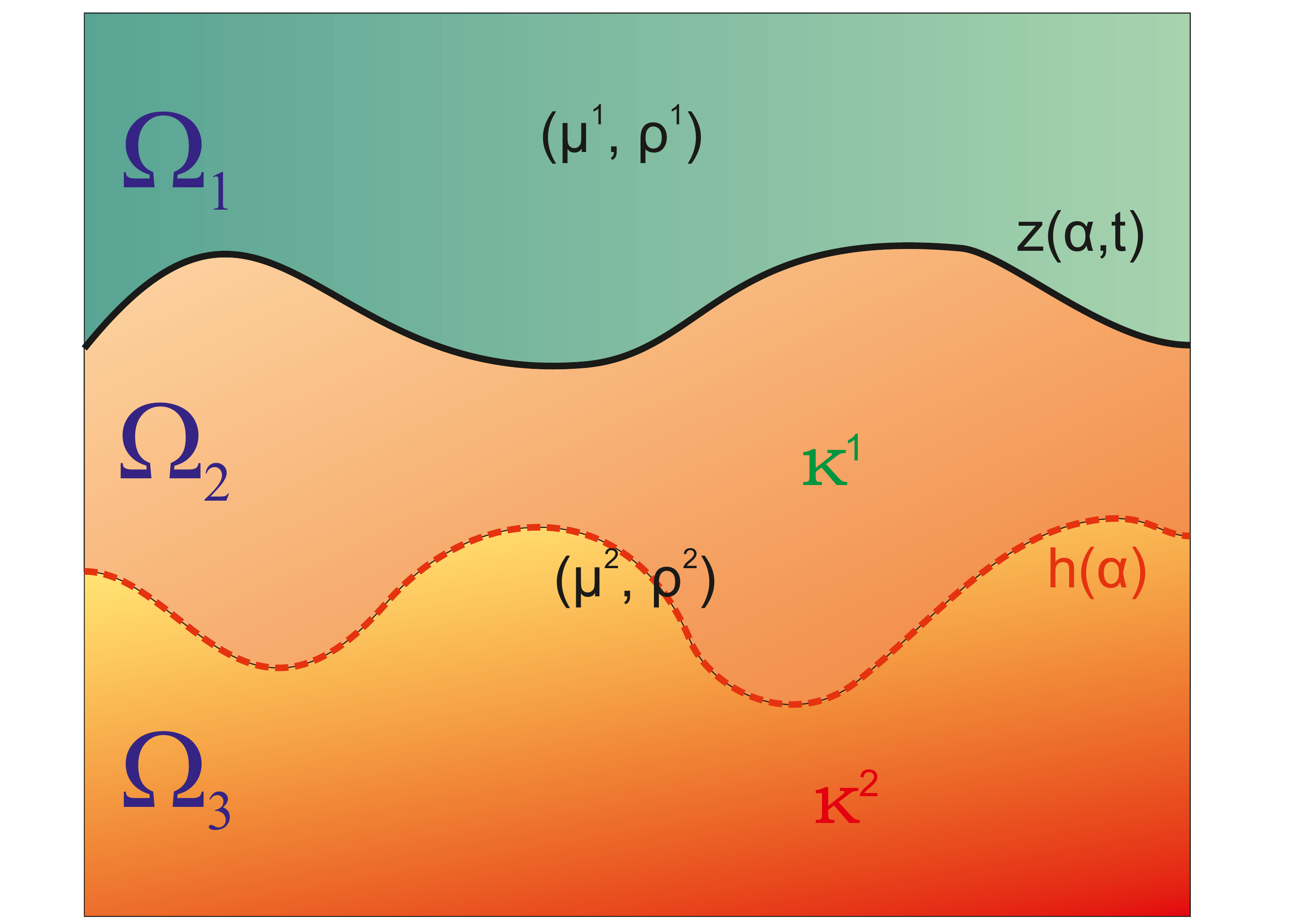}
\caption{Inhomogeneous Muskat Problem} \label{fig:inhmuskat}
\end{figure}

We parametrize the interface between two fluids by a curve
\begin{displaymath}
z(\alpha,t)=\{(z_{1}(\alpha,t),z_{2}(\alpha,t)):\alpha\in\R\}
\end{displaymath}
and 
\begin{displaymath}
h(\alpha)=\{(h_{1}(\alpha),h_{2}(\alpha)):\alpha\in\R\}
\end{displaymath}
for the curve, fixed on time, which separates two different regions with different permeability (see Figure \ref{fig:inhmuskat}). We are going to consider that these two curves don't touch each other initially.

Using Darcy's law we can see that the vorticity is nule inside the different regions, so we can consider in distributions sense:
\begin{displaymath}
\omega(x,t)=\varpi_{1}(\alpha,t)\delta(x-z(\alpha,t))+\varpi_{2}(\alpha,t)\delta(x-h(\alpha)).
\end{displaymath}
Using the Biot-Savart law we know that
\begin{align*}
u(x,t)&=\frac{1}{2\pi}PV\int_{\R}\frac{(x-z(\beta,t))^{\bot}}{\abs{x-z(\beta,t)}^{2}}\varpi_{1}(\beta,t)d\beta+\frac{1}{2\pi}PV\int_{\R}\frac{(x-h(\beta))^{\bot}}{\abs{x-h(\beta)}^{2}}\varpi_{2}(\beta,t)d\beta\\
&\equiv\brz{x}+\brh{x}.
\end{align*}
If we calculate directional limits in the normal direction of $z(\alpha,t)$ and $h(\alpha)$:
\begin{align*}
&u^{\pm}(z(\alpha,t),t)=\brz{z}(\alpha,t)+\brh{z}(\alpha,t)\mp\frac{1}{2}\frac{\varpi_{1}(\alpha,t)}{\abs{\partial_{\alpha}z(\alpha,t)}^{2}}\partial_{\alpha}z(\alpha,t)\\
&u^{\pm}(h(\alpha),t)=\brz{h}(\alpha,t)+\brh{h}(\alpha,t)\mp\frac{1}{2}\frac{\varpi_{2}(\alpha,t)}{\abs{\partial_{\alpha}h(\alpha)}^{2}}\partial_{\alpha}h(\alpha)
\end{align*}
Since $p^{+}(z(\alpha,t),t)=p^{-}(z(\alpha,t),t)$ (see \cite{hele}) we have,
\begin{displaymath}
\frac{\mu^{2}}{\kappa^{1}}u^{+}(z(\alpha,t),t)-\frac{\mu^{1}}{\kappa^{1}}u^{-}(z(\alpha,t),t)\cdot\partial_{\alpha}z(\alpha,t)=-g(\rho^{2}-\rho^{1})\partial_{\alpha}z_{2}(\alpha,t)
\end{displaymath}
Using the above limits,
\begin{align*}
&\frac{\mu^{2}}{\kappa^{1}}u^{+}(z(\alpha,t),t)-\frac{\mu^{1}}{\kappa^{1}}u^{-}(z(\alpha,t),t)\cdot\partial_{\alpha}z(\alpha,t)=\\
&=\frac{\mu^{2}-\mu^{1}}{\kappa^{1}}(\brz{z}+\brh{z})\cdot\partial_{\alpha}z(\alpha,t)+\frac{\mu^{2}-\mu^{1}}{2\kappa^{1}}\varpi_{1}(\alpha,t)
\end{align*}
Hence,
\begin{equation}
\label{vortici1}
\varpi_{1}(\alpha,t)=-2\frac{\mu^{2}-\mu^{1}}{\mu^{2}+\mu^{1}}(\brz{z}+\brh{z})\cdot\partial_{\alpha}z(\alpha,t)-2\kappa^{1}\frac{\rho^{2}-\rho^{1}}{\mu^{2}+\mu^{1}}g\partial_{\alpha}z_{2}(\alpha,t).
\end{equation}

For $\varpi_{2}(\alpha,t)$ we proceed in the same way. Since $p^{+}(h(\alpha),t)=p^{-}(h(\alpha),t)$ then,
\begin{displaymath}
\mu^{2}(\frac{u^{-}(h(\alpha),t)}{\kappa^{2}}-\frac{u^{+}(h(\alpha),t)}{\kappa^{1}})\cdot\partial_{\alpha}h(\alpha)=-\partial_{\alpha}(p^{-}(h(\alpha),t)-p^{+}(h(\alpha),t))=0
\end{displaymath}
and
\begin{align*}
&\mu^{2}(\frac{u^{-}(h(\alpha),t)}{\kappa^{2}}-\frac{u^{+}(h(\alpha),t)}{\kappa^{1}})\cdot\partial_{\alpha}h(\alpha)=\\
&=\frac{\mu^{2}}{\kappa^{2}-\kappa^{1}}(\brz{h}+\brh{h})\cdot\partial_{\alpha}h(\alpha)+\frac{\mu^{2}}{2(\kappa^{2}-\kappa^{1})}\varpi_{2}(\alpha,t).
\end{align*}
Therefore,
\begin{equation}
\label{vortici2}
\varpi_{2}(\alpha,t)=-2\frac{\kappa^{1}-\kappa^{2}}{\kappa^{2}+\kappa^{1}}(\brz{h}+\brh{h})\cdot\partial_{\alpha}h(\alpha).
\end{equation}

We consider our curve with the following periodic conditions,
\begin{displaymath}
(z_{1}(\alpha+2k\pi,t),z_{2}(\alpha+2k\pi,t))=(z_{1}(\alpha,t)+2k\pi,z_{2}(\alpha,t))
\end{displaymath}
with $z(\alpha,0)=z_{0}(\alpha)$.
We will add tangential term in order to get $\abs{\partial_{\alpha}z(\alpha,t)}^{2}\equiv A(t)$. For that we take,
\begin{align*}
c(\alpha,t)=&\frac{\alpha+\pi}{2\pi A(t)}\int_{\T}\partial_{\alpha}z(\beta,t)\cdot\partial_{\alpha}(\brz{z}+\brh{z})d\beta\\
&-\int_{-\pi}^{\alpha}\frac{\partial_{\alpha}z(\beta,t)}{A(t)}\cdot\partial_{\alpha}(\brz{z}+\brh{z})d\beta
\end{align*}
where $\T=[-\pi,\pi]$.
Finally, the system which describes our problem is:
\begin{equation*}
(P)\left\{ \begin{array}{ll}
z_{t}(\alpha,t)=\brz{z}(\alpha,t)+\brh{z}(\alpha,t)+c(\alpha,t)\partial_{\alpha}z(\alpha,t)\\
c(\alpha,t)=\frac{\alpha+\pi}{2\pi A(t)}\int_{\T}\partial_{\alpha}z(\beta,t)\cdot\partial_{\alpha}(\brz{z}+\brh{z})d\beta\\
-\int_{-\pi}^{\alpha}\frac{\partial_{\alpha}z(\beta,t)}{A(t)}\cdot\partial_{\alpha}(\brz{z}+\brh{z})d\beta\\
\varpi_{1}(\alpha,t)=-2\frac{\mu^{2}-\mu^{1}}{\mu^{2}+\mu^{1}}(\brz{z}+\brh{z})\cdot\partial_{\alpha}z(\alpha,t)\\
-2\kappa^{1}\frac{\rho^{2}-\rho^{1}}{\mu^{2}+\mu^{1}}g\partial_{\alpha}z_{2}(\alpha,t)\\
\varpi_{2}(\alpha,t)=-2\frac{\kappa^{1}-\kappa^{2}}{\kappa^{2}+\kappa^{1}}(\brz{h}+\brh{h})\cdot\partial_{\alpha}h(\alpha)
\end{array} \right.
\end{equation*}

For the stability of the problem we consider the Rayleigh-Taylor condition. Rayleigh \cite{ray} and Saffman-Taylor \cite{tay} gave this condition which must be satisfied for the linearized model in order to have a solution locally in time, namely that the normal component of the pressure gradient jump at the interface has to have a distinguished sign. This condition can be written as
\begin{displaymath}
\sigma(\alpha,t)=\frac{\mu^{2}-\mu^{1}}{\kappa^{1}}(\brz{z}+\brh{z})\cdot\pa{\bot}z(\alpha)+(\rho^{2}-\rho^{1})g\pa{}z_{1}(\alpha)>0.
\end{displaymath}

 Using Hopf's lemma, the Rayleigh-Taylor condition is satisfied for $\mu^{1}=\rho^{1}=0$ (see \cite{splash}). For the case of equal viscosities ($\mu^{1}=\mu^{2}$), this condition holds when the more dense fluid lies below the interface \cite{contour}.
 
 We will focus on the existence of classical solution locally in time in Sobolev spaces for the stable regime. There is a vast literature about these problems. In the case with $\kappa(x)\equiv\textbf{constant}$, this stability has been used to prove local existence in Sobolev spaces, when $\mu^{1}\neq\mu^{2}$ and $\rho^{1}\neq\rho^{2}$, in \cite{hele}. Taking the initial data on $H^2$ with $\mu^1=\rho^1=0$, local existence has been proved on \cite{granerh2}.  When $\mu^{1}=\mu^{2}$ local existence and instant analyticity in the stable case are available, see \cite{raytay} and \cite{contour}. For small data, the fact that $\sigma>0$ has been used to prove global existence as we can check in \cite{global}, \cite{global2}, \cite{para}, \cite{Granero} and \cite{onglobal2d3d}. For improvements for local and global existence for the case with finite slope, see \cite{globalregul}.
 
 For the case where $\kappa(x)$ is a step function, the authors in \cite{localsolv} prove local existence in Sobolev spaces by means of energy methods when the system is in the stable regime and the two fluids have the same viscosities ($\mu^{1}=\mu^{2}$). In this scenario, moreover, they consider $h(\alpha)=(\alpha,-h_{2})$, with $h_{2}>0$. Then the formula for the strength of the vorticities are simpler
\begin{align*}
&\varpi_{1}(\alpha,t)=-(\rho^{2}-\rho^{1})\partial_{\alpha}\zz(\alpha,t),\\
&\varpi_{2}(\alpha,t)=\frac{\kappa^{1}-\kappa^{2}}{\kappa^{1}+\kappa^{2}}\frac{\kappa^{1}(\rho^{2}-\rho^{1})}{\pi}PV\int_{\R}\frac{h_{2}+z_{2}(\alpha,t)}{\abs{h(\alpha)-z(\beta)}^{2}}\pa{}z_{2}(\beta,t)d\beta.
\end{align*} 
 In our case, with different viscosities, the expressions (\ref{vortici1}) and (\ref{vortici2}) involves the Birkhoff-Rott integrals, so we find a delicate issue, we need to invert an operator.
 
 Finally we introduce the functions that measures the arc-chord condition of the curves and the distance beetween both:
 \begin{displaymath}
\F(\alpha,\beta,t)=\frac{\beta^{2}}{\abs{z(\alpha)-z(\alpha-\beta)}^{2}},\quad\alpha,\beta\in\R
\end{displaymath}
with 
\begin{displaymath}
\F(\alpha,0,t)=\frac{1}{\abs{\partial_{\alpha}z(\alpha,t)}^{2}};
\end{displaymath}
and
\begin{displaymath}
d(z,h)=\frac{1}{\abs{z(\alpha,t)-h(\beta)}^{2}},\quad\alpha,\beta\in\R.
\end{displaymath}

The main theorem of this paper is the following:
\begin{thm}
\label{main}
Let $z_{0}(\alpha)\in H^{k}(\T)$ for $k\ge 3$, $h(\alpha)\in H^{k}(\T)$, $\mathcal{F}(z_{0})(\alpha,\beta)\in L^{\infty}$, $\G(\alpha,\beta)\in L^{\infty}$ and $d(z_{0},h)\in L^{\infty}$. Then, if the Rayleigh-Taylor condition is satisfied, there exists a classical solution of the Muskat problem $(P)$, $z\in\mathcal{C}^{1}([0,T],H^{k}(\T))$ where $T=T(z_{0})$.  
\end{thm}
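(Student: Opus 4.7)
The plan is to follow the standard energy-method strategy for free boundary problems of Muskat type: regularize the system $(P)$, prove a priori energy estimates uniform in the regularization parameter $\varepsilon$, and then pass to the limit. The energy must be built from several quantities simultaneously, namely the Sobolev norm $\norm{z}_{H^{k}(\T)}$, the arc-chord quantities $\norm{\F}_{L^{\infty}}$ and $\norm{\G}_{L^{\infty}}$, the separation $\norm{d(z,h)}_{L^{\infty}}$ between the two interfaces, and the inverse $\norm{1/\sigma}_{L^{\infty}}$ of the Rayleigh-Taylor function. The time $T=T(z_{0})$ will be the first time at which this $\varepsilon$-uniform bound could fail.

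First I would regularize by mollifying the Birkhoff-Rott integrals with a Friedrichs mollifier $\mathcal{J}_{\varepsilon}$ and adding an artificial parabolic term $\varepsilon\,\partial_{\alpha}^{2}z^{\varepsilon}$ to the equation for $z_{t}$, so that local existence for the mollified system $(P^{\varepsilon})$ reduces to a Picard iteration on a ball of $H^{k}(\T)$. The first non-routine difficulty shows up already here: since $\varpi_{1}$ and $\varpi_{2}$ appear on both sides of (\ref{vortici1})--(\ref{vortici2}), one has to invert a coupled system $(I+\mathcal{K})(\varpi_{1},\varpi_{2})^{T}=F$, where $\mathcal{K}$ is a $2\times 2$ operator matrix built from Birkhoff--Rott integrals. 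I would verify that $\mathcal{K}$ is compact on the relevant Sobolev pair, exploiting the smoothing produced by the principal value on smooth closed curves, and show injectivity of $I+\mathcal{K}$ using the contraction factors $\abs{\mu^{2}-\mu^{1}}/(\mu^{2}+\mu^{1})<1$ and $\abs{\kappa^{1}-\kappa^{2}}/(\kappa^{1}+\kappa^{2})<1$; Fredholm theory then produces a bounded inverse with norm controlled in terms of $\norm{\F}_{L^{\infty}}$, $\norm{\G}_{L^{\infty}}$ and $\norm{d(z,h)}_{L^{\infty}}$.

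The main step is the a priori energy estimate. Differentiating $\norm{z}_{H^{k}(\T)}^{2}$ in time produces a contribution from the tangential term $c\,\pa{}z$, which is handled by standard commutator estimates, plus a contribution from the Birkhoff--Rott field. After commuting $\pa{k}$ past the singular kernel and integrating by parts, the top order piece reduces, modulo controllable remainders, to a term of the form $\int \pa{k}z\cdot T(\sigma\,\pa{k}z)\,d\alpha$ with $T$ a positive operator of order one. The Rayleigh-Taylor condition $\sigma>0$ is precisely what makes this into a dissipative quantity that absorbs the bad derivative, yielding $\tfrac{d}{dt}E(t)\le C\,P(E(t))$ for a polynomial $P$. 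Auxiliary evolution inequalities for $\norm{\F}_{L^{\infty}}$, $\norm{\G}_{L^{\infty}}$ and $\norm{d(z,h)}_{L^{\infty}}$ follow from pointwise bounds on their time derivatives. The hardest single piece I expect is the evolution of $\norm{1/\sigma}_{L^{\infty}}$, since $\sigma$ itself involves Birkhoff--Rott terms at both curves and must be differentiated in time with care in order to guarantee uniform positivity on $[0,T]$.

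Finally, with a uniform bound on $E$ on a common interval $[0,T]$, I would pass to the limit $\varepsilon\to 0$ by an Aubin--Lions compactness argument, recovering $z\in\mathcal{C}([0,T],H^{k}(\T))$, and then use the equation itself to upgrade this to $z\in\mathcal{C}^{1}([0,T],H^{k}(\T))$ since each term of $z_{t}=\brz{z}+\brh{z}+c\,\pa{}z$ lies continuously in $H^{k}(\T)$. Uniqueness is obtained by running the same energy argument on the difference of two solutions at regularity $H^{k-1}$, again using the Rayleigh--Taylor dissipation and Gronwall. The essential obstacle throughout is the coupling between the two interfaces through the implicit vorticity equations: every top-order estimate of $z$ drags in $\pa{k-1}\varpi_{1}$ and $\pa{k-1}\varpi_{2}$, which must be extracted from the inverse of $(I+\mathcal{K})$, and the regularity of this inverse has to be tracked consistently at every step of the estimate.
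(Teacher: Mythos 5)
Your overall architecture (regularize, prove energy estimates in $H^{k}$ where the Rayleigh--Taylor function $\sigma$ supplies the sign of the top-order term $\int\sigma\,\pa{k}z\cdot\Lambda(\pa{k}z)\,d\alpha$, track the arc-chord quantities, the separation $d(z,h)$ and the minimum of $\sigma$, then pass to the limit) matches the paper. The genuine gap is in the step you dismiss as routine: the inversion of the coupled vorticity system. First, injectivity of $I+M\OT$ does \emph{not} follow from the contraction factors $\abs{\gamma_{1}}=\abs{\mu^{2}-\mu^{1}}/(\mu^{2}+\mu^{1})<1$ and $\abs{\gamma_{2}}=\abs{\kappa^{1}-\kappa^{2}}/(\kappa^{1}+\kappa^{2})<1$, because $\norm{\OT}_{L^{2}}$ is in general large; what is actually needed is the spectral statement that every eigenvalue $\lambda$ of the $2\times 2$ matrix operator $\OT^{*}$ satisfies $\abs{\lambda}<1$. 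In the paper this is proved by representing the entries $T_{i}^{*}$ as boundary values of Cauchy integrals $F_{1},F_{2}$ and running a positivity argument for $\int_{\Omega_{j}}\abs{F_{1}'+F_{2}'}^{2}dx$ over the three domains $\Omega_{1},\Omega_{2},\Omega_{3}$ separated by $z$ and $h$; the coupling terms $T_{2},T_{3}$ between the two curves make this a nontrivial extension of the one-curve case, and your proposal contains no substitute for it. Second, and more damaging for the scheme: Fredholm theory only gives the \emph{existence} of a bounded inverse, with no quantitative control. The energy estimates require the explicit bound $\norm{(I+M\OT)^{-1}}_{H^{\frac{1}{2}}_{0}}\le e^{C\nor{z,h}^{2}}$ (Propositions \ref{norml2}--\ref{invertoperhunmedio}), since this is what converts the a priori control of $\norm{\F}_{L^{\infty}}$, $\norm{d(z,h)}_{L^{\infty}}$ and $\norm{z}_{H^{k}}$ into bounds for $\norm{\varpi}_{H^{k}}$ and for the Birkhoff--Rott integrals, and hence lets the Gronwall argument close. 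The paper obtains this bound constructively, via the operators $\mathcal{H}_{i}$: Fourier multipliers of $\sinh/\cosh$ type on a flat strip, transported to the curved domains by conformal maps whose distortion is controlled from below by the arc-chord conditions and by $\norm{d(z,h)}_{L^{\infty}}$. Saying that ``Fredholm theory then produces a bounded inverse with norm controlled in terms of $\norm{\F}_{L^{\infty}}$, $\norm{\G}_{L^{\infty}}$ and $\norm{d(z,h)}_{L^{\infty}}$'' asserts exactly the estimate that has to be proved, so as written your a priori estimates for $\varpi$ (and therefore for $z$) do not close.

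Two smaller remarks. Your regularization adds an artificial viscosity $\varepsilon\partial_{\alpha}^{2}z^{\varepsilon}$; the paper instead mollifies the vorticity amplitudes (with $\phi_{\varepsilon}*\phi_{\varepsilon}*$) and the initial datum, which is what allows the mollified dissipative term $\int\sigma^{\varepsilon}\,\phi_{\varepsilon}*\pa{k}z^{\varepsilon}\cdot\Lambda(\phi_{\varepsilon}*\pa{k}z^{\varepsilon})\,d\alpha$ to be handled by the pointwise inequality $f\Lambda f-\tfrac{1}{2}\Lambda(f^{2})\ge 0$; if you use parabolic regularization you must re-derive the vorticity inversion and the R--T structure for the modified system. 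Also, uniqueness is not claimed in the theorem, so that part of your plan is unnecessary, while the part you flag as hardest (persistence of $\sigma>0$) is indeed handled in the paper by estimating $\sigma_{t}$, which again requires the quantitative inverse bound through $\norm{\partial_{t}\varpi}_{H^{\frac{1}{2}}}$.
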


In order to prove this theorem we have organized the paper as follows.

We devote section \ref{invertiroperador} to the study of the operator 
\begin{displaymath}
\mathcal{T}(u_{1},u_{2})(\alpha) = \;
   \begin{pmatrix}
      T_{1} & T_{2} \\
      T_{3} & T_{4} 
   \end{pmatrix}
   \begin{pmatrix}
   u_{1}\\
   u_{2}
   \end{pmatrix}
\end{displaymath}
where 
\begin{align*}
&T_{1}(u)(\alpha)=2BR(u,z)_{z}(\alpha)\cdot\partial_{\alpha}z(\alpha),\\
&T_{2}(u)(\alpha)=2BR(u,h)_{z}(\alpha)\cdot\partial_{\alpha}z(\alpha),\\
&T_{3}(u)(\alpha)=2BR(u,z)_{h}(\alpha)\cdot\partial_{\alpha}h(\alpha),\\
&T_{4}(u)(\alpha)=2BR(u,h)_{h}(\alpha)\cdot\partial_{\alpha}h(\alpha);
\end{align*}

In this section we want to estimate the $H^{\frac{1}{2}}$-norm of the $(I+M\OT)^{-1}$ for $M=\begin{pmatrix}
\mu_{1}&0\\
0&\mu_{2}
\end{pmatrix}$ with $\abs{\mu_{i}}\le 1$ for $i=1,2$. The main obstacle in this problem is to found some operators in order to estimate the $L^{2}$-norm of the inverse operator.

Once we have the above estimates, we are qualified to estimate the strength of the vorticities and the Birkhoff-Rott integrals. These estimations can be found in sections \ref{estiamplitud} and \ref{estibr}.

For the purpose of study the local existence of classical solutions in Sobolev Spaces, we will use energy methods. For that we will need to obtain several a priori estimates for the curve $z(\alpha,t)$ with regularity $H^{k}$ for $k\ge 3$. We present these technical computations in section \ref{estimacionesz}.

The other tools which we will need to show the estimates of the evolution of our energy, are the study of the evolution of the distance between $z$ and $h$ and the evolution of the minimum of the R-T condition. Sections \ref{evoldi} and \ref{evolsig} are dedicated to that.

Finally, after all these computations, in section \ref{regu} we follow the classical procedure and show the main theorem \ref{main}.
\section{Inverse Operator}\label{invertiroperador}

\subsection{The basic operator}
Let us consider the operator 
\begin{displaymath}
\OT(u_{1},u_{2})(\alpha) = \;
   \begin{pmatrix}
      T_{1} & T_{2} \\
      T_{3} & T_{4} 
   \end{pmatrix}
   \begin{pmatrix}
   u_{1}\\
   u_{2}
   \end{pmatrix}
\end{displaymath}
where 
\begin{align*}
&T_{1}(u)(\alpha)=2BR(u,z)_{z}(\alpha)\cdot\partial_{\alpha}z(\alpha)\\
&T_{2}(u)(\alpha)=2BR(u,h)_{z}(\alpha)\cdot\partial_{\alpha}z(\alpha)\\
&T_{3}(u)(\alpha)=2BR(u,z)_{h}(\alpha)\cdot\partial_{\alpha}h(\alpha)\\
&T_{4}(u)(\alpha)=2BR(u,h)_{h}(\alpha)\cdot\partial_{\alpha}h(\alpha)
\end{align*}
\begin{lem}
\label{tl2h1}
Suppose that $\norm{\F}_{L^{\infty}}<\infty$, $\norm{\G}_{L^{\infty}}<\infty$, $\norm{d(z,h)}_{L^{\infty}}<\infty$ and $z\in\mathcal{C}^{2,\delta}$, $h\in\mathcal{C}^{2,\delta}$. Then $\OT:L^{2}\times L^{2}\to H^{1}\times H^{1}$ is compact and
\begin{displaymath}
\norm{\OT}_{L^{2}\times L^{2}\to H^{1}\times H^{1}}\le C\norm{\F}^{2}_{L^{\infty}}\norm{d(z,h)}^{2}_{L^{\infty}}\norm{z}^{4}_{\mathcal{C}^{2,\delta}}
\end{displaymath}

\end{lem}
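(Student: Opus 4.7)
My plan is to treat each entry of $\OT$ as an integral operator with an explicit kernel, and to prove the stated bound together with compactness by careful kernel analysis that exploits cancellations.

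First I would handle the boundedness into $L^{2}\times L^{2}$. Writing
\[
T_{1}(u)(\alpha)=\frac{1}{\pi}\int_{\T}K_{1}(\alpha,\beta)u(\beta)d\beta,\qquad K_{1}(\alpha,\beta)=\frac{(z(\alpha)-z(\beta))^{\bot}\cdot\pa{}z(\alpha)}{\abs{z(\alpha)-z(\beta)}^{2}},
\]
a Taylor expansion of the numerator about $\beta=\alpha$ kills the leading order because $\pa{}z^{\bot}\cdot\pa{}z=0$, so the principal value is unnecessary and $K_{1}$ is in fact continuous with $\norm{K_{1}}_{L^{\infty}}\le C\norm{\F}_{L^{\infty}}\norm{z}_{\mathcal{C}^{2}}^{2}$. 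The same cancellation handles $T_{4}$. For the off-diagonal operators $T_{2},T_{3}$, the curves do not meet, so $\abs{z(\alpha)-h(\beta)}^{-2}\le\norm{d(z,h)}_{L^{\infty}}$ immediately bounds the kernels by smooth expressions in $z,h$. Cauchy–Schwarz over the torus then yields the $L^{2}\times L^{2}$ part of the target estimate.

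Second I would differentiate in $\alpha$ to pick up the $H^{1}$ component of each image. For the off-diagonal pieces this is routine because the kernels are smooth in $\alpha$. The delicate work is for $T_{1}$ and $T_{4}$: applying the quotient rule to $K_{1}$ produces two terms, each of which expands at leading order like $(\beta-\alpha)^{-1}$ with coefficient determined by $\pa{}z^{\bot}\cdot\pa{2}z$. I would combine these two pieces against the symmetry coming from an integration by parts in $\beta$ (together with periodicity) to rewrite the singular part as a kernel in which the differences $\pa{}z(\alpha)-\pa{}z(\beta)$ and $\pa{2}z(\alpha)-\pa{2}z(\beta)$ appear; the $\delta$-Hölder regularity inherited from $z\in\mathcal{C}^{2,\delta}$ then turns the marginally singular behaviour into an $L^{\infty}$ bound on the full $\alpha$-derivative kernel, controlled by $\norm{\F}_{L^{\infty}}^{2}\norm{z}_{\mathcal{C}^{2,\delta}}^{3}$. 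Assembling these with the $L^{2}$ bounds produces the claimed operator norm $C\norm{\F}_{L^{\infty}}^{2}\norm{d(z,h)}_{L^{\infty}}^{2}\norm{z}_{\mathcal{C}^{2,\delta}}^{4}$.

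Third, for compactness, I would observe that the previous step actually produced kernels whose $\alpha$-derivatives are not merely bounded but $\delta'$-Hölder continuous in $\alpha$ uniformly in $\beta$ for some $\delta'\in(0,\delta)$ (again thanks to $z,h\in\mathcal{C}^{2,\delta}$). This means the image of any bounded set in $L^{2}\times L^{2}$ is bounded in $H^{1+\delta'}(\T)\times H^{1+\delta'}(\T)$, and compactness then follows from the Rellich embedding $H^{1+\delta'}(\T)\hookrightarrow H^{1}(\T)$.

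The main obstacle is clearly the second step, and specifically for $T_{1}$: a blind quotient-rule differentiation of $K_{1}$ shows a $(\beta-\alpha)^{-1}$ singularity that at first glance only gives a bounded, Calderón-commutator–type operator on $L^{2}$ (good for boundedness, useless for compactness). The whole argument hinges on exhibiting the compensation between the two singular quotient-rule terms and on converting the leftover nearly singular contribution into an integrable kernel via the Hölder regularity $\delta$, both to close the $H^{1}$ norm bound and to gain the small amount of extra smoothing needed for compactness.
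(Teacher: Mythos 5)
Your route to the $H^{1}$ bound is genuinely different from the paper's: the paper disposes of the diagonal entries $T_{1},T_{4}$ by quoting Lemma 3.1 of \cite{hele} and only estimates the off-diagonal kernels $T_{2},T_{3}$ by hand (splitting $\partial_{\alpha}T_{2}(v)$ into the part where the derivative hits the kernel and the part where it hits $v$, the latter handled by integration by parts in $\beta$), whereas you differentiate the frozen-variable kernel $K_{1}(\alpha,\beta)$ directly and exploit a cancellation between the two quotient-rule terms. That cancellation is real: the two singular pieces have opposite leading parts proportional to $\frac{\pa{\bot}z(\alpha)\cdot\pa{2}z(\alpha)}{(\alpha-\beta)\abs{\pa{}z(\alpha)}^{2}}$, so no Hilbert-transform term survives (unlike for the adjoint $T_{1}^{*}$ treated in Proposition \ref{estimaT}). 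But your intermediate claim that this leaves an $L^{\infty}$ bound on $\partial_{\alpha}K_{1}$ is wrong: with $z\in\mathcal{C}^{2,\delta}$ the remainder is of size $\abs{\alpha-\beta}^{\delta-1}$ near the diagonal and in general no better, so the differentiated kernel is unbounded. This is harmless for the norm estimate — an integrable kernel with a uniform bound on $\int\abs{\partial_{\alpha}K_{1}(\alpha,\beta)}d\beta$ plus Schur's test gives the $L^{2}$ bound for the derivative, which is what your closing paragraph in fact invokes — but it should be stated that way; also the appeal to ``integration by parts in $\beta$ together with periodicity'' is unnecessary (and would put a derivative on the $L^{2}$ density): the cancellation is pointwise at the kernel level.

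The genuine gap is the compactness step. You assert that the $\alpha$-differentiated kernels are $\delta'$-H\"older in $\alpha$ uniformly in $\beta$; for $T_{1}$ and $T_{4}$ this is false, since, as just noted, $\partial_{\alpha}K_{1}(\alpha,\beta)$ blows up like $\abs{\alpha-\beta}^{\delta-1}$ at the diagonal, so boundedness of the image in $H^{1+\delta'}\times H^{1+\delta'}$ does not follow from the argument you give (it would require a separate fractional-smoothing estimate for kernels of this type, which you neither state nor prove). Moreover you are aiming at more than is needed: what is actually used later (invertibility of $I+M\OT^{*}$ via Fredholm theory) only requires $\OT$ and $\OT^{*}$ to be compact as operators on $L^{2}\times L^{2}$, and that follows at once from the bound into $H^{1}\times H^{1}$ you have already established together with the compact Rellich embedding $H^{1}(\T)\hookrightarrow L^{2}(\T)$. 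Either argue compactness that way, or supply a genuine proof of the extra smoothing; as written this step does not close.
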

\begin{proof}
We have 
\begin{displaymath}
\OT(w)(\alpha) = \;
   \begin{pmatrix}
      T_{1} & T_{2} \\
      T_{3} & T_{4} 
   \end{pmatrix}
   \begin{pmatrix}
   u\\
   v
   \end{pmatrix}=\begin{pmatrix}
   T_{1}(u)+T_{2}(v)\\
   T_{3}(u)+T_{4}(v)
   \end{pmatrix}
\end{displaymath}
and we consider $\norm{(u,v)}_{L^{2}}=\norm{u}_{L^{2}}+\norm{v}_{L^{2}},$
then
\begin{displaymath}
\norm{\OT(w)}_{L^{2}}=\norm{T_{1}(u)+T_{2}(v)}_{L^{2}}+\norm{T_{3}(u)+T_{4}(v)}_{L^{2}}
\end{displaymath}

We want to estimate $\norm{\partial_{\alpha}\OT(w)}_{L^{2}}$. Since
\begin{align*}
&\norm{\partial_{\alpha}T_{1}(u)+\partial_{\alpha}T_{2}(v)}_{L^{2}}\le\norm{\partial_{\alpha}T_{1}(u)}_{L^{2}}+\norm{\partial_{\alpha}T_{2}(v)}_{L^{2}},\\
&\norm{\partial_{\alpha}T_{3}(u)+\partial_{\alpha}T_{4}(v)}_{L^{2}}\le\norm{\partial_{\alpha}T_{3}(u)}_{L^{2}}+\norm{\partial_{\alpha}T_{4}(v)}_{L^{2}},
\end{align*}
it is enough to estimate each $T_{i}$ for $i=1,2,3,4$ separately.

Operator $T_{1}$ and $T_{4}$ are exactly the same as the operator $T$ on \cite{hele}. Therefore, by lemma 3.1 on \cite{hele} we have:
\begin{align*}
&\norm{\partial_{\alpha}T_{1}(u)}_{L^{2}}\le C\norm{\F}^{2}_{L^{\infty}}\norm{z}^{4}_{\mathcal{C}^{2,\delta}}\norm{u}_{L^{2}},\\
&\norm{\partial_{\alpha}T_{4}(v)}_{L^{2}}\le C\norm{\G}^{2}_{L^{\infty}}\norm{h}^{4}_{\mathcal{C}^{2,\delta}}\norm{v}_{L^{2}}.
\end{align*}
Let us estimate operator $T_{2}$ and $T_{3}$.
We write first,
\begin{align*}
\partial_{\alpha}T_{2}(v)&=\frac{1}{\pi}PV\int_{\R}\pa{} (\frac{(z(\alpha)-h(\alpha-\beta))^{\bot}\cdot\pa{} z(\alpha)}{\abs{z(\alpha)-h(\alpha-\beta)}^{2}})v(\alpha-\beta)d\beta\\
&+\frac{1}{\pi}PV\int_{\R}\frac{(z(\alpha)-h(\alpha-\beta))^{\bot}\cdot\pa{} z(\alpha)}{\abs{z(\alpha)-h(\alpha-\beta)}^{2}}\pa{} v(\alpha-\beta)d\beta\equiv I_{1}+I_{2}.
\end{align*}
Then, using $\Delta zh=z(\alpha)-h(\alpha-\beta)$ in order to reduce notation,
\begin{align*}
&I_{1}=\frac{1}{\pi}PV\int_{\R}\frac{(\pa{} z(\alpha)-\pa{} h(\alpha-\beta))^{\bot}\cdot\pa{} z(\alpha)}{\abs{z(\alpha)-h(\alpha-\beta)}^{2}}v(\alpha-\beta)d\beta\\
&+\frac{1}{\pi}PV\int_{\R}\frac{(z(\alpha)-h(\alpha-\beta))^{\bot}\cdot\partial^{2}_{\alpha} z(\alpha)}{\abs{z(\alpha)-h(\alpha-\beta)}^{2}}v(\alpha-\beta)d\beta\\
&-\frac{2}{\pi}PV\int_{\R}\frac{(\Delta zh)^{\bot}\cdot\pa{} z(\alpha)(\Delta zh)\cdot\pa{}\Delta zh}{\abs{z(\alpha)-h(\alpha-\beta)}^{4}}v(\alpha-\beta)d\beta\\
&\equiv I_{1}^{1}+I_{1}^{2}+I_{1}^{3}.
\end{align*} 

Since $\pa{} z(\alpha)\cdot\pa{} z(\alpha)^{\bot}=0$ we have,
\begin{displaymath}
I_{1}^{1}=-\frac{1}{\pi}PV\int_{\R}\frac{\pa{} h(\alpha-\beta)\cdot\pa{} z(\alpha)}{\abs{z(\alpha)-h(\alpha-\beta)}^{2}}v(\alpha-\beta)d\beta\le C\norm{d(z,h)}_{L^{\infty}}\norm{\pa{} z}_{L^{\infty}}\norm{h}_{H^{1}}\norm{v}_{L^{2}}.
\end{displaymath}
Using the Cauchy inequality it is easy to get $u\cdot v\le\frac{\abs{u}^{2}}{2}+\frac{\abs{v}^{2}}{2}$, then
\begin{align*}
I_{1}^{2}&\le\frac{1}{2\pi}PV\int_{\R}v(\alpha-\beta)d\beta+\frac{1}{2\pi}PV\int_{\R}\frac{\abs{\partial^{2}_{\alpha}z(\alpha)}^{2}}{\abs{z(\alpha)-h(\alpha-\beta)}^{2}}v(\alpha-\beta)d\beta\\
&\le C\norm{v}_{L^{2}}+C\norm{d(z,h)}_{L^{\infty}}\norm{z}_{\mathcal{C}^{2}}^{2}\norm{v}_{L^{2}}
\end{align*}

\begin{align*}
&\abs{I_{1}^{3}}\le\frac{2}{\pi}PV\int_{\R}\frac{\abs{z(\alpha)-h(\alpha-\beta)}^{2}\abs{\pa{} z(\alpha)}\abs{\pa{} z(\alpha)-\pa{} h(\alpha-\beta)}}{\abs{z(\alpha)-h(\alpha-\beta)}^{4}}\abs{v(\alpha-\beta)}d\beta\\
&\le C\norm{d(z,h)}_{L^{\infty}}\norm{z}_{\mathcal{C}^{1}}(\norm{z}_{\mathcal{C}^{1}}+\norm{h}_{\mathcal{C}^{1}})\norm{v}_{L^{2}}
\end{align*}

On the other hand, using integration by parts
\begin{align*}
I_{2}&=\frac{1}{\pi}PV\int_{\R}\partial_{\beta}(\frac{(z(\alpha)-h(\alpha-\beta))^{\bot}\cdot\pa{} z(\alpha)}{\abs{z(\alpha)-h(\alpha-\beta)}^{2}})v(\alpha-\beta)d\beta\\
&=\frac{1}{\pi}PV\int_{\R}\frac{(\pa{} h(\alpha-\beta))^{\bot}\cdot\pa{} z(\alpha)}{\abs{z(\alpha)-h(\alpha-\beta)}^{2}}v(\alpha-\beta)d\beta\\
&-\frac{2}{\pi}PV\int_{\R}\frac{(z(\alpha)-h(\alpha-\beta))^{\bot}\cdot\pa{} z(\alpha)(z(\alpha)-h(\alpha-\beta))\cdot\pa{} h(\alpha-\beta)}{\abs{z(\alpha)-h(\alpha-\beta)}^{4}}v(\alpha-\beta)d\beta\\
&\le C\norm{d(z,h)}_{L^{\infty}}\norm{\pa{} z}_{L^{\infty}}\norm{h}_{\mathcal{C}^{1}}\norm{v}_{L^{2}}+C\norm{d(z,h)}_{L^{\infty}}\norm{z}_{\mathcal{C}^{1}}\norm{h}_{\mathcal{C}^{1}}\norm{v}_{L^{2}}.
\end{align*}

Then,

\begin{displaymath}
\norm{\pa{} T_{2}(v)}_{L^{2}}\le C\norm{d(z,h)}_{L^{\infty}}\norm{z}^{2}_{\mathcal{C}^{2}}\norm{h}_{\mathcal{C}^{1}}\norm{v}_{L^{2}}.
\end{displaymath} 

Finally, we have to estimate $\pa{} T_{3}$. We have,
\begin{align*}
\pa{} T_{3}(u)(\alpha)&=\frac{1}{\pi}PV\int_{\R}\pa{} (\frac{(h(\alpha)-z(\alpha-\beta))^{\bot}\cdot\pa{} h(\alpha)}{\abs{h(\alpha)-z(\alpha-\beta)}^{2}})u(\alpha-\beta)d\beta\\
&+\frac{1}{\pi}PV\int_{\R}\frac{(h(\alpha)-z(\alpha-\beta))^{\bot}\cdot\pa{} h(\alpha)}{\abs{h(\alpha)-z(\alpha-\beta)}^{2}}\pa{} u(\alpha-\beta)d\beta.
\end{align*}
Changing $z$ for $h$, we can check that we have the same estimates as in $T_{2}$. Thus,
\begin{displaymath}
\norm{T_{3}(u)}_{L^{2}}\le C\norm{d(z,h)}_{L^{\infty}}\norm{h}^{2}_{\mathcal{C}^{2}}\norm{z}_{\mathcal{C}^{1}}\norm{u}_{L^{2}}.
\end{displaymath}

Therefore, 
\begin{displaymath}
\norm{\pa{}\OT(u,v)}_{L^{2}}\le C\norm{\F}^{2}_{L^{\infty}}\norm{\G}^{2}_{L^{\infty}}\norm{d(z,h)}^{2}_{L^{\infty}}\norm{h}^{4}_{\mathcal{C}^{2,\delta}}\norm{z}^{4}_{\mathcal{C}^{2,\delta}}\norm{(u,v)}_{L^{2}}.
\end{displaymath}
Since $h$ is fixed on time, $\norm{\G}_{L^{\infty}}^{2}$ and $\norm{h}^{4}_{\mathcal{C}^{2,\delta}}$ are not dependent of time. Thus we get,
\begin{displaymath}
\norm{\pa{}\OT(w)}_{L^{2}}\le C\norm{\F}^{2}_{L^{\infty}}\norm{d(z,h)}^{2}_{L^{\infty}}\norm{z}^{4}_{\mathcal{C}^{2,\delta}}\norm{(w)}_{L^{2}}.
\end{displaymath}
\end{proof}

%%%%%%%%%%%%%%%%%%%%%%%%%%%%%%%%%%%%%%%%%%%%%%%%%%%%%%%%%

\subsection{Estimates on the inverse operator}\label{estiminversesect}

We are going to work with the adjoint operator of $\OT$ in order to estimate the inverse operator $(I+M\OT)^{-1}$.

We have,
\begin{align*}
&\left( \begin{pmatrix}
   u_{1}\\
   u_{2}
   \end{pmatrix},
   \begin{pmatrix}
      T_{1} & T_{2} \\
      T_{3} & T_{4} 
   \end{pmatrix}
   \begin{pmatrix}
   w_{1}\\
   w_{2}
   \end{pmatrix}
   \right) =\left( 
   \begin{pmatrix}
   u_{1}\\
   u_{2}
   \end{pmatrix},
   \begin{pmatrix}
      T_{1}(w_{1})+ T_{2}(w_{2}) \\
      T_{3}(w_{1})+ T_{4}(w_{2}) 
   \end{pmatrix}\right)\\
   &=(T_{1}(w_{1}),u_{1})+(T_{2}(w_{2}),u_{1})+(T_{3}(w_{1}),u_{2})+(T_{4}(w_{2}),u_{2})\\
   &=(w_{1},T^{*}_{1}(u_{1}))+(w_{2},T^{*}_{2}(u_{1}))+(w_{1},T^{*}_{3}(u_{2}))+(w_{2},T^{*}_{4}(u_{2}))=\\
   &\left( \begin{pmatrix}
   w_{1}\\
   w_{2}
   \end{pmatrix},
   \begin{pmatrix}
      T^{*}_{1}(u_{1}) + T^{*}_{3}(u_{2}) \\
      T^{*}_{2}(u_{1}) + T^{*}_{4}(u_{2}) 
   \end{pmatrix}
   \right) =\left( 
   \begin{pmatrix}
   w_{1}\\
   w_{2}
   \end{pmatrix},
   \begin{pmatrix}
      T^{*}_{1} & T^{*}_{3} \\
      T^{*}_{2} & T^{*}_{4} 
   \end{pmatrix}
   \begin{pmatrix}
      u_{1} \\
      u_{2} 
   \end{pmatrix}\right)
\end{align*}
The adjoint operator is given by
\begin{displaymath}
\OT^{*}(u_{1},u_{2})(\alpha) = \;
   \begin{pmatrix}
      T^{*}_{1} & T^{*}_{3} \\
      T^{*}_{2} & T^{*}_{4} 
   \end{pmatrix}
   \begin{pmatrix}
   u_{1}\\
   u_{2}
   \end{pmatrix}
\end{displaymath}

 where we can compute:
\begin{displaymath}
T_{1}^{*}(u)(\alpha)=-\frac{1}{\pi}PV\int_{\R}\frac{(z(\alpha)-z(\beta))^{\bot}\cdot\partial_{\alpha}z(\beta)}{\abs{z(\alpha)-z(\beta)}^{2}}u(\beta)d\beta,
\end{displaymath}
\begin{displaymath}
T_{2}^{*}(u)(\alpha)=-\frac{1}{\pi}PV\int_{\R}\frac{(h(\alpha)-z(\beta))^{\bot}\cdot\partial_{\alpha}z(\beta)}{\abs{h(\alpha)-z(\beta)}^{2}}u(\beta)d\beta,
\end{displaymath}
\begin{displaymath}
T_{3}^{*}(u)(\alpha)=-\frac{1}{\pi}PV\int_{\R}\frac{(z(\alpha)-h(\beta))^{\bot}\cdot\partial_{\alpha}h(\beta)}{\abs{z(\alpha)-h(\beta)}^{2}}u(\beta)d\beta,
\end{displaymath}
and
\begin{displaymath}
T_{4}^{*}(u)(\alpha)=-\frac{1}{\pi}PV\int_{\R}\frac{(h(\alpha)-h(\beta))^{\bot}\cdot\partial_{\alpha}h(\beta)}{\abs{h(\alpha)-h(\beta)}^{2}}u(\beta)d\beta.
\end{displaymath}
\begin{prop}\label{estimaT}
Suppose that $\norm{\F}_{L^{\infty}}<\infty$, $\norm{\G}_{L^{\infty}}<\infty$,\\ $\norm{d(z,h)}_{L^{\infty}}<\infty$ and $z,h\in\mathcal{C}^{2,\delta}$. Then $\OT^{*}:L^{2}\times L^{2}\to H^{1}\times H^{1}$ and
\begin{displaymath}
\norm{\OT^{*}}_{L^{2}\times L^{2}\to H^{1}\times H^{1}}\le\norm{\F}_{L^{\infty}}^{2}\norm{d(z,h)}_{L^{\infty}}^{2}\norm{z}^{2}_{\mathcal{C}^{2,\delta}}.
\end{displaymath}
\end{prop}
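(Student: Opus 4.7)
The structure of the adjoint is the same as that of $\OT$, namely a $2\times 2$ matrix of Birkhoff--Rott-type operators, so the proof mirrors Lemma \ref{tl2h1}. Splitting $\OT^{*}(u_{1},u_{2})=(T_{1}^{*}(u_{1})+T_{3}^{*}(u_{2}),\,T_{2}^{*}(u_{1})+T_{4}^{*}(u_{2}))$ and writing $\norm{\OT^{*}(u_{1},u_{2})}_{H^{1}\times H^{1}}$ as a sum of the four contributions, the $L^{2}$ piece is routine (it is controlled exactly as in Lemma \ref{tl2h1}), so the real task reduces to bounding $\norm{\pa{}T_{i}^{*}(u)}_{L^{2}}$ by $\norm{u}_{L^{2}}$ for each $i=1,2,3,4$.

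For $T_{1}^{*}$ and $T_{4}^{*}$, the formal adjoints of the single-curve operators $T_{1}$ and $T_{4}$ whose kernels coincide with the operator $T$ studied in \cite{hele}, I would differentiate under the integral sign. The product rule applied to $(z(\alpha)-z(\beta))^{\bot}/\abs{z(\alpha)-z(\beta)}^{2}$ yields two principal-value pieces of Calderón-commutator type; the key cancellation is $\pa{}z(\alpha)^{\bot}\cdot\pa{}z(\alpha)=0$, which lets one rewrite $\pa{}z(\alpha)^{\bot}\cdot\pa{}z(\beta)$ as $\pa{}z(\alpha)^{\bot}\cdot(\pa{}z(\beta)-\pa{}z(\alpha))$, reducing the apparent second-order kernel singularity to first order; the second piece is treated similarly. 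Boundedness of the resulting Calderón-type operators on $L^{2}$, in terms of $\norm{\F}_{L^{\infty}}$ and $\norm{z}_{\mathcal{C}^{2,\delta}}$, then gives the required bound (this is exactly the content of Lemma 3.1 of \cite{hele} applied to the adjoint); the analogous argument with $h$ in place of $z$ handles $T_{4}^{*}$.

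For $T_{2}^{*}$ and $T_{3}^{*}$, the kernels $\abs{h(\alpha)-z(\beta)}^{-2}$ and $\abs{z(\alpha)-h(\beta)}^{-2}$ are nonsingular, because the two curves do not touch and hence $\abs{h(\alpha)-z(\beta)}^{-1}\le\norm{\di}_{L^{\infty}}^{1/2}$ uniformly. Differentiating in $\alpha$ acts only on the ``outer'' curve, and the product rule expands $\pa{}T_{2}^{*}(u)(\alpha)$ as a handful of integrals with smooth kernels; each is controlled by Minkowski's inequality and Cauchy--Schwarz in terms of $\norm{\di}_{L^{\infty}}$, $\norm{z}_{\mathcal{C}^{1}}$, $\norm{h}_{\mathcal{C}^{1}}$ and $\norm{u}_{L^{2}}$. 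The estimate for $T_{3}^{*}$ is symmetric, interchanging the roles of the two curves.

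The main obstacle is the singular-integral analysis of $T_{1}^{*}$ and $T_{4}^{*}$: the cancellation inside the principal value must be extracted carefully so that each remaining piece has Calderón-commutator structure and can be estimated uniformly in terms of $\norm{\F}_{L^{\infty}}$ and $\norm{z}_{\mathcal{C}^{2,\delta}}$. Once the four bounds are combined, summing the resulting factors and absorbing the time-independent quantities $\norm{\G}_{L^{\infty}}$ and $\norm{h}_{\mathcal{C}^{2,\delta}}$ into the constant $C$ produces the stated inequality.
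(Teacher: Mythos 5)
Your proposal is correct and follows essentially the same route as the paper: estimate each $T_{i}^{*}$ separately, treat $T_{1}^{*}$ and $T_{4}^{*}$ by the singular-integral analysis of Lemma 3.1 in \cite{hele} (exploiting the cancellation $\pa{}z^{\bot}\cdot\pa{}z=0$ and extracting a Hilbert-transform/Calder\'on-commutator piece controlled by $\norm{\F}_{L^{\infty}}$ and $\norm{z}_{\mathcal{C}^{2,\delta}}$), bound the off-diagonal terms $T_{2}^{*},T_{3}^{*}$ directly using $\norm{d(z,h)}_{L^{\infty}}$ since their kernels are nonsingular, and absorb the time-independent quantities $\norm{\G}_{L^{\infty}}$, $\norm{h}_{\mathcal{C}^{2,\delta}}$ into the constant. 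The only (immaterial) difference is that you differentiate the kernel directly in the untranslated variables, whereas the paper changes variables to $\alpha-\beta$ and removes the resulting derivative on $u$ by an integration by parts, which leads to the same kernel terms.
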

\begin{proof}
In the same way as in the study of $\OT$, we can prove this estimate studying each $T_{i}^{*}$.
\begin{displaymath}
T_{1}^{*}(u)=-\frac{1}{\pi}PV\int_{\R}\frac{(z(\alpha)-z(\beta))^{\bot}\cdot\partial_{\alpha}z(\beta)}{\abs{z(\alpha)-z(\beta)}^{2}}u(\beta)d\beta
\end{displaymath}
then
\begin{align*}
\partial_{\alpha}T_{1}^{*}(u)=&-\frac{1}{\pi}PV\int_{\R}\partial_{\alpha}(\frac{(\Delta z)^{\bot}\cdot\partial_{\alpha}z(\alpha-\beta)}{\abs{\Delta z}^{2}})u(\alpha-\beta)d\beta\\
&-\frac{1}{\pi}PV\int_{\R}\frac{(\Delta z)^{\bot}\cdot\partial_{\alpha}z(\alpha-\beta)}{\abs{\Delta z}^{2}}\partial_{\alpha}u(\alpha-\beta)d\beta\equiv I_{1}+I_{2}.
\end{align*}
$I_{1}$ is estimated in the same way that operator $T_{1}$. Using integration by parts
\begin{align*}
I_{2}&=\frac{1}{\pi}PV\int_{\R}\frac{(\Delta z)^{\bot}\cdot\partial_{\alpha}z(\alpha-\beta)}{\abs{\Delta z}^{2}}\partial_{\beta}u(\alpha-\beta)d\beta\\
&=-\frac{1}{\pi}PV\int_{\R}\partial_{\beta}(\frac{(\Delta z)^{\bot}\cdot\partial_{\alpha}z(\alpha-\beta)}{\abs{\Delta z}^{2}})u(\alpha-\beta)d\beta\\
&=-\frac{1}{\pi}PV\int_{\R}\frac{(\partial^{\bot}_{\alpha} z(\alpha-\beta)\cdot\partial_{\alpha}z(\alpha-\beta)}{\abs{\Delta z}^{2}}u(\alpha-\beta)d\beta\\
&-\frac{1}{\pi}PV\int_{\R}\frac{(\Delta z)^{\bot}\cdot\partial^{2}_{\alpha}z(\alpha-\beta)}{\abs{\Delta z}^{2}}u(\alpha-\beta)d\beta\\
&+\frac{2}{\pi}PV\int_{\R}\frac{(\Delta z)^{\bot}\cdot\partial_{\alpha}z(\alpha-\beta)\Delta z\cdot\partial_{\alpha}z(\alpha-\beta)}{\abs{\Delta z}^{2}})u(\alpha-\beta)d\beta\equiv I_{2}^{1}+I_{2}^{2}+I_{2}^{3}.
\end{align*}
Since $\partial_{\alpha}^{\bot}z\cdot\partial_{\alpha}z=0$, $I_{2}^{1}=0$.

We can write
\begin{align*}
I_{2}^{2}&=-\frac{1}{\pi}PV\int_{\R}(\frac{(\Delta z)^{\bot}}{\abs{\Delta z}^{2}}-\frac{\pa{\bot}z(\alpha)}{\beta\abs{\pa{} z(\alpha)}^{2}})\cdot\partial^{2}_{\alpha}z(\alpha-\beta)u(\alpha-\beta)d\beta\\
&-\frac{1}{\pi}PV\int_{\R}\frac{\pa{\bot}z(\alpha)}{\beta\abs{\pa{} z(\alpha)}^{2}}\cdot\partial^{2}_{\alpha}z(\alpha-\beta)u(\alpha-\beta)d\beta\equiv I_{2}^{21}+I_{2}^{22}.
\end{align*} 
Since we compute:
\begin{align*}
&\frac{(\Delta z)^{\bot}}{\abs{\Delta z}^{2}}-\frac{\pa{\bot}z(\alpha)}{\beta\abs{\pa{} z(\alpha)}^{2}}=\frac{\beta\abs{\pa{} z(\alpha)}^{2}\Delta z^{\bot}-\pa{\bot}z(\alpha)\abs{\Delta z}^{2}}{\beta\abs{\pa{} z(\alpha)}^{2}\abs{\Delta z}^{2}}\\
&=\frac{\beta^{2}\abs{\pa{} z(\alpha)}^{2}\int_{0}^{1}\pa{\bot}z(\alpha-\beta+t\beta)dt-\pa{\bot}z(\alpha)\abs{\Delta z}^{2}}{\beta\abs{\pa{} z(\alpha)}^{2}\abs{\Delta z}^{2}}\\
&=\frac{\beta^{3}\abs{\pa{} z(\alpha)}^{2}\int_{0}^{1}\int_{0}^{1}\pa{2\bot}z(\alpha-s\beta+st\beta)(t-1)dsdt+\pa{\bot}z(\alpha)(\beta^{2}\abs{\pa{}z(\alpha)}^{2}-\abs{\Delta z}^{2})}{\beta\abs{\pa{} z(\alpha)}^{2}\abs{\Delta z}^{2}}\\
&\frac{\beta^{2}\int_{0}^{1}\int_{0}^{1}\pa{2}z(\alpha-s\beta+st\beta)(t-1)dsdt}{\abs{\Delta z}^{2}}\\
&+\frac{\beta^{2}\pa{\bot}z(\alpha)\int_{0}^{1}\int_{0}^{1}\pa{2}z(\alpha-\beta+t\beta+s\beta-ts\beta)(1-t)dsdt\cdot\int_{0}^{1}(\pa{}z(\alpha)+\pa{}z(\alpha-\beta+t\beta))dt}{\abs{\pa{} z(\alpha)}^{2}\abs{\Delta z}^{2}},
\end{align*} 
therefore
\begin{align*}
I_{2}^{21}\le C\norm{\F}_{L^{\infty}}\norm{z}_{\mathcal{C}^{2}}^{2}\norm{u}_{L^{2}}.
\end{align*}
For the term $I_{2}^{22}$
\begin{align*}
I_{2}^{22}=&-\frac{1}{\pi}PV\int_{\R}\frac{\pa{\bot}z(\alpha)}{\abs{\pa{} z(\alpha)}^{2}}\cdot\frac{\partial^{2}_{\alpha}z(\alpha-\beta)-\pa{2}z(\alpha)}{\beta}u(\alpha-\beta)d\beta\\
&-\frac{1}{\pi}PV\int_{\R}\frac{\pa{\bot}z(\alpha)}{\abs{\pa{} z(\alpha)}^{2}}\cdot\partial^{2}_{\alpha}z(\alpha)\frac{u(\alpha-\beta)}{\beta}d\beta\\
&\le C\norm{\F}^{\frac{1}{2}}_{L^{\infty}}\norm{z}_{\mathcal{C}^{2,\delta}}\norm{u}_{L^{2}}-\frac{1}{\pi}\frac{\pa{\bot}z(\alpha)\cdot\pa{2}z(\alpha)}{\abs{\pa{}z(\alpha)}^{2}}H(u)\\
&\le C\norm{\F}^{\frac{1}{2}}_{L^{\infty}}\norm{z}_{\mathcal{C}^{2,\delta}}\norm{u}_{L^{2}}.
\end{align*} 
We can see easily for $\phi=\alpha-\beta+t\beta$
\begin{align*}
I_{2}^{3}&= \frac{2}{\pi}PV\int_{\R}\frac{\beta^{2}\int_{0}^{1} \pa{\bot}z(\phi)dt\cdot\partial_{\alpha}z(\alpha-\beta)\int_{0}^{1}\pa{}z(\phi)dt\cdot\partial_{\alpha}z(\alpha-\beta)}{\abs{\Delta z}^{2}}u(\alpha-\beta)d\beta\\
&\le C\norm{\F}_{L^{\infty}}\norm{z}^{2}_{\mathcal{C}^{1}}\norm{z}^{2}_{H^{1}}\norm{u}_{L^{2}}.
\end{align*}
Now, we consider
\begin{align*}
T_{2}^{*}(v)=-\frac{1}{\pi}PV\int_{\R}\frac{(h(\alpha)-z(\beta))^{\bot}\cdot\partial_{\alpha}z(\beta)}{\abs{h(\alpha)-z(\beta)}^{2}}v(\beta)d\beta,
\end{align*}
then
\begin{align*}
\pa{}T_{2}^{*}(v)&=-\frac{1}{\pi}PV\int_{\R}\pa{}(\frac{(h(\alpha)-z(\alpha-\beta))^{\bot}\cdot\partial_{\alpha}z(\alpha-\beta)}{\abs{h(\alpha)-z(\alpha-\beta)}^{2}})v(\alpha-\beta)d\beta\\
&-\frac{1}{\pi}PV\int_{\R}\frac{(h(\alpha)-z(\alpha-\beta))^{\bot}\cdot\partial_{\alpha}z(\alpha-\beta)}{\abs{h(\alpha)-z(\alpha-\beta)}^{2}}\pa{}v(\alpha-\beta)d\beta\equiv J_{1}+J_{2}.
\end{align*}
Using $\pa{\bot}z\cdot\pa{}z=0$,
\begin{align*}
J_{1}&=-\frac{1}{\pi}PV\int_{\R}\frac{\pa{\bot}h(\alpha)\cdot\partial_{\alpha}z(\alpha-\beta)}{\abs{h(\alpha)-z(\alpha-\beta)}^{2}}v(\alpha-\beta)d\beta\\
&-\frac{1}{\pi}PV\int_{\R}\frac{(h(\alpha)-z(\alpha-\beta))^{\bot}\cdot\pa{2}z(\alpha-\beta)}{\abs{h(\alpha)-z(\alpha-\beta)}^{2}}v(\alpha-\beta)d\beta\\
&+\frac{2}{\pi}PV\int_{\R}\frac{(\Delta hz)^{\bot}\cdot\partial_{\alpha}z(\alpha-\beta)\Delta hz\cdot(\pa{}h(\alpha)-\pa{}z(\alpha-\beta))}{\abs{h(\alpha)-z(\alpha-\beta)}^{4}}v(\alpha-\beta)d\beta\\
&\equiv J_{1}^{1}+J_{1}^{2}+J_{1}^{3}.
\end{align*}
Directly,
\begin{align*}
\abs{J_{1}^{1}}\le C\norm{d(z,h)}_{L^{\infty}}\norm{z}_{\mathcal{C}^{2}}\norm{h}_{\mathcal{C}^{1}}\norm{v}_{L^{2}},
\end{align*}

\begin{align*}
\abs{J_{1}^{2}}\le C\norm{d(z,h)}_{L^{\infty}}^{\frac{1}{2}}\norm{z}^{2}_{\mathcal{C}^{2}}\norm{v}_{L^{2}}
\end{align*}
and
\begin{align*}
&J_{1}^{3}\le C\norm{d(z,h)}_{L^{\infty}}\norm{z}_{\mathcal{C}^{1}}(\norm{z}_{\mathcal{C}^{1}}+\norm{h}_{\mathcal{C}^{1}})\norm{v}_{L^{2}}.
\end{align*}

Now, we study the term $J_{2}$. Since $\pa{}z\cdot\pa{\bot}z=0$,
\begin{align*}
J_{2}&=-\frac{1}{\pi}PV\int_{\R}\partial_{\beta}(\frac{(h(\alpha)-z(\alpha-\beta))^{\bot}\cdot\partial_{\alpha}z(\alpha-\beta)}{\abs{h(\alpha)-z(\alpha-\beta)}^{2}})v(\alpha-\beta)d\beta\\
&=\frac{1}{\pi}PV\int_{\R}\frac{(h(\alpha)-z(\alpha-\beta))^{\bot}\cdot\partial_{\alpha}^{2}z(\alpha-\beta)}{\abs{h(\alpha)-z(\alpha-\beta)}^{2}}v(\alpha-\beta)d\beta\\
&-\frac{2}{\pi}PV\int_{\R}\frac{(\Delta zh)^{\bot}\cdot\partial_{\alpha}z(\alpha-\beta)\Delta zh\cdot\pa{}z(\alpha-\beta)}{\abs{h(\alpha)-z(\alpha-\beta)}^{4}}v(\alpha-\beta)d\beta.\\
\end{align*}
Using the same procedure as in term $J_{1}$,
\begin{align*}
\abs{J_{2}^{1}}\le C\norm{d(z,h)}^{\frac{1}{2}}_{L^{\infty}}\norm{z}_{\mathcal{C}^{2}}\norm{v}_{L^{2}}
\end{align*}
and
\begin{align*}
\abs{J_{2}^{2}}\le C\norm{d(z,h)}_{L^{\infty}}\norm{z}^{2}_{\mathcal{C}^{1}}\norm{v}_{L^{2}}.
\end{align*}
The operator $T_{3}^{*}(v)(\alpha)$ is estimated as well as $T_{2}^{*}(u)(\alpha)$ changing $z$ with $h$ and vice verse. For $T_{4}^{*}(v)(\alpha)$ we do the same as for $T_{1}^{*}(u)(\alpha)$ changing $z$ for $h$ and instead of $\F$ the arc-chord condition for $h$, $\G$.

In conclusion,
\begin{displaymath}
\norm{\pa{}\OT^{*}w}_{L^{2}}\le C\norm{\F}^{2}_{L^{\infty}}\norm{\G}_{L^{\infty}}^{2}\norm{d(z,h)}^{2}_{L^{\infty}}\norm{z}_{\mathcal{C}^{2,\delta}}^{2}\norm{h}_{\mathcal{C}^{2,\delta}}^{2}\norm{w}_{L^{2}}.
\end{displaymath}
\end{proof}

Now it will be useful to consider the following functions:

Let $x$ be outside the curve $z(\alpha)$ and $h(\alpha)$, then we define
\begin{align*}
f_{1}(x)=&-\frac{1}{\pi}PV\int_{\R}\frac{(x-z(\beta))^{\bot}\cdot\partial_{\alpha}z(\beta)}{\abs{x-z(\beta)}^{2}}u(\beta)d\beta\\
&=\frac{1}{\pi}PV\int_{\R}\frac{(x_{2}-z_{2}(\beta))\partial_{\alpha}z_{1}(\beta)}{\abs{x-z(\beta)}^{2}}u(\beta)d\beta-\frac{1}{\pi}PV\int_{\R}\frac{(x_{1}-z_{1}(\beta))\partial_{\alpha}z_{2}(\beta)}{\abs{x-z(\beta)}^{2}}u(\beta)d\beta.
\end{align*}
In the following we identify $(u_{1},u_{2})$ with $u_{1}+iu_{2}$. Since $-u^{\bot}\cdot v=u_{2}v_{1}-u_{1}v_{2}$ and $(u_{1}+iu_{2})(v_{1}+iv_{2})=(u_{1}v_{1}+u_{2}v_{2})+i(u_{2}v_{1}-u_{1}v_{2})$
we get,
\begin{displaymath}
-u^{\bot}\cdot v=\im(u\bar{v}).
\end{displaymath}
Therefore, we can write 
\begin{displaymath}
f_{1}(x)=\frac{1}{\pi}\im\int_{\R}\frac{(x-z(\beta))\overline{\partial_{\alpha}z(\beta)}}{\abs{x-z(\beta)}^{2}}u(\beta)d\beta
\end{displaymath}
In the same way,
\begin{displaymath}
f_{2}(x)=\frac{1}{\pi}\im\int_{\R}\frac{(x-h(\beta))\overline{\partial_{\alpha}h(\beta)}}{\abs{x-h(\beta)}^{2}}v(\beta)d\beta
\end{displaymath}
Both are the real part of the following Cauchy integrals
\begin{align*}
F_{1}(x)=f_{1}(x)+ig_{1}(x)=\frac{1}{i\pi}\int_{\R}\frac{(x-z(\beta))\overline{\partial_{\alpha}z(\beta)}}{\abs{x-z(\beta)}^{2}}u(\beta)d\beta,\\
F_{2}(x)=f_{2}(x)+ig_{2}(x)=\frac{1}{i\pi}\int_{\R}\frac{(x-h(\beta))\overline{\partial_{\alpha}h(\beta)}}{\abs{x-h(\beta)}^{2}}v(\beta)d\beta
\end{align*}

Taking $x=z(\alpha)+\epsilon\partial^{\bot}_{\alpha}z(\alpha)$ and letting $\epsilon\to 0$, we obtain

\begin{equation}
\label{f1}
f_{1}(z(\alpha))=T_{1}^{*}(u)(\alpha)-sign(\epsilon)u(\alpha).
\end{equation}

and taking $x=h(\alpha)+\epsilon\partial^{\bot}_{\alpha}h(\alpha)$ and letting $\epsilon\to 0$
\begin{equation}
\label{f4}
f_{2}(h(\alpha))=T_{4}^{*}(v)(\alpha)-sign(\epsilon)v(\alpha).
\end{equation}

Since the curve $z(\alpha)$ does not touch the curve $h(\alpha)$, we have
\begin{equation}
\label{f2}
f_{1}(h(\alpha))=T_{2}^{*}(u)(\alpha)
\end{equation}
and
\begin{equation}
\label{f3}
f_{2}(z(\alpha))=T_{3}^{*}(v)(\alpha).
\end{equation}

On the other hand,
\begin{displaymath}
\lim_{\epsilon\to 0}g_{1}(z(\alpha)\pm\epsilon\partial^{\bot}_{\alpha}z(\alpha))=\lim_{\epsilon\to 0}\im(F_{1}(z(\alpha)\pm\epsilon\partial^{\bot}_{\alpha}z(\alpha))\equiv G_{1}(u)(\alpha)
\end{displaymath}
where
\begin{displaymath}
G_{1}(u)(\alpha)=-\frac{1}{\pi}PV\int_{\R}\frac{(z(\alpha)-z(\beta))\cdot\partial_{\alpha}z(\beta)}{\abs{z(\alpha)-z(\beta)}^{2}}u(\beta)d\beta.
\end{displaymath}

In the same way, taking limits
\begin{displaymath}
\lim_{\epsilon\to 0}g_{2}(h(\alpha)\pm\epsilon\partial^{\bot}_{\alpha}h(\alpha))=\lim_{\epsilon\to 0}\im(F_{2}(h(\alpha)\pm\epsilon\partial^{\bot}_{\alpha}h(\alpha))\equiv G_{2}(u)(\alpha)
\end{displaymath}
where
\begin{displaymath}
G_{2}(v)(\alpha)=-\frac{1}{\pi}PV\int_{\R}\frac{(h(\alpha)-h(\beta))\cdot\partial_{\alpha}h(\beta)}{\abs{h(\alpha)-h(\beta)}^{2}}v(\beta)d\beta
\end{displaymath}
 
 Therefore, we have the fact that $g_{i}^{+}(z(\alpha))=g_{i}^{-}(z(\alpha))$ and $g_{i}^{+}(h(\alpha))=g_{i}^{-}(h(\alpha))$ for $i=1,2$, where $(\cdot)^{+}$ denotes the limit obtained approaching from above to the boundaries in the normal direction and $(\cdot)^{-}$ from below.(This fact will be used on Subsection \ref{operhi}).
 
Now we will show that $\OT^{*}w=\lambda w\Rightarrow\abs{\lambda}<1$.

   If $w$ is a eigenvector of $\OT$, we have
   \begin{displaymath}
   \OT^{*}w = \;
   \begin{pmatrix}
      T_{1}^{*} &  T_{3}^{*} \\
      T_{2}^{*} &  T_{4}^{*} 
   \end{pmatrix}
   \begin{pmatrix}
      u \\
      v
   \end{pmatrix}=
   \begin{pmatrix}
      T_{1}^{*}u + T_{3}^{*}v \\
      T_{2}^{*}u + T_{4}^{*}v 
   \end{pmatrix}=
    \begin{pmatrix}
      \lambda u \\
      \lambda v
   \end{pmatrix}= \lambda w.
   \end{displaymath}
   Let us compute $\nabla f_{i}$ for $i=1,2$. 
   The identity
   \begin{align*}
   f_{1}(x)&=\frac{1}{\pi}\im\int_{\R}\frac{(x-z(\beta))\overline{\partial_{\alpha}z(\beta)}}{\abs{x-z(\beta)}^{2}}u(\beta)d\beta=\frac{1}{\pi}\im\int_{\R}\frac{\overline{\partial_{\alpha}z(\beta)}}{\overline{(x-z(\beta))}}u(\beta)d\beta\\
   &=-\frac{1}{\pi}\im\int_{\R}\partial_{\beta}\ln(x-z(\beta))u(\beta)d\beta=\frac{1}{\pi}\im\int_{\R}\ln(x-z(\beta))\partial_{\beta}u(\beta)d\beta
   \end{align*}
   yields
   \begin{displaymath}
   \nabla f_{1}(x)=\frac{1}{\pi}\im\int_{\R}\partial_{\beta}u(\beta)\nabla\ln(x-z(\beta))d\beta.
   \end{displaymath}
   That is
   \begin{displaymath}
   \nabla f_{1}(x)=\frac{1}{\pi}\int_{\R}\partial_{\beta}u(\beta)\nabla\arg(x-z(\beta))d\beta=\frac{1}{\pi}\int_{\R}\frac{(x-z(\beta))^{\bot}}{\abs{x-z(\beta)}^{2}}\partial_{\beta}u(\beta)d\beta.
   \end{displaymath}
   In the same way,
   \begin{displaymath}
   \nabla f_{2}(x)=\frac{1}{\pi}\int_{\R}\frac{(x-h(\beta))^{\bot}}{\abs{x-h(\beta)}^{2}}\partial_{\beta}v(\beta)d\beta.
   \end{displaymath}
   Taking $x=z(\alpha)+\epsilon z(\alpha)$ and letting $\epsilon\to 0$ in $\nabla f_{1}$ we have
   \begin{equation}
   \label{nf1}
   \nabla f_{1}(z(\alpha))=2BR(\partial_{\alpha}u,z)_{z}-sign(\epsilon)\frac{\pa{}u(\alpha)\partial_{\alpha}z(\alpha)}{2\abs{\partial_{\alpha}z(\alpha)}^{2}}.
   \end{equation}
   On the other hand, taking $x=h(\alpha)+\epsilon h(\alpha)$ on $\nabla f_{2}$ and letting $\epsilon\to 0$,
   \begin{equation}
   \label{nf4}
   \nabla f_{2}(h(\alpha))=2BR(\partial_{\alpha}v,h)_{h}-sign(\epsilon)\frac{\pa{}v(\alpha)\partial_{\alpha}h(\alpha)}{2\abs{\partial_{\alpha}h(\alpha)}^{2}}.
   \end{equation}
  Obviously,
  
   \begin{equation}
   \label{nf2}
   \nabla f_{1}(h(\alpha))=2BR(\partial_{\alpha}u,z)_{h}
   \end{equation}
 and
   \begin{equation}
   \label{nf3}
   \nabla f_{2}(z(\alpha))=2BR(\partial_{\alpha}v,h)_{z}.
   \end{equation}
  
Assuming now that $\OT^{*}w=\lambda w$, $\Omega_{1}$ is the domain placed above of the curve $z(\alpha)$, $\Omega_{2}$ is the domain between $z(\alpha)$ and $h(\alpha)$ and $\Omega_{3}$ is below of the curve $h(\alpha)$. The analyticity of $F_{i}$ for $i=1,2$ allows us to obtain:
\begin{align}
\label{lambda1}
&0<\int_{\Omega_{1}}\abs{F'_{1}(x)+F_{2}'(x)}^{2}dx=2\int_{\Omega_{1}}\abs{(\nabla f_{1}(x)+\nabla f_{2}(x))}^{2}dx\\\nonumber
&=-2\int_{\Omega_{1}}\Delta(f_{1}(x)+f_{2}(x))(f_{1}(x)+f_{2}(x))dx\\\nonumber
&-2\int_{\T}(f_{1}^{+}(z(\alpha))+f_{2}^{+}(z(\alpha)))(\nabla f_{1}^{+}(z(\alpha))+\nabla f_{2}^{+}(z(\alpha)))\cdot\frac{\partial_{\alpha}^{\bot}z(\alpha)}{\abs{\partial_{\alpha}z(\alpha)}}d\alpha\\\nonumber
&=2\int_{\T}(-T_{1}^{*}(u)(\alpha)+u(\alpha)-T_{3}^{*}(v)(\alpha))(2BR(\partial_{\alpha}u,z)_{z}+2BR(\partial_{\alpha}v,h)_{z})\cdot\frac{\partial_{\alpha}^{\bot}z(\alpha)}{\abs{\partial_{\alpha}z(\alpha)}}d\alpha\\\nonumber
&=\int_{\T}(u(\alpha)-\lambda u(\alpha))M(u,v,h,z)d\alpha=(1-\lambda)\int_{\T}u(\alpha)M(u,v,h,z)d\alpha\\\nonumber
&\equiv(1-\lambda)A,
\end{align}
\begin{align}
\label{lambda2}
&0<\int_{\Omega_{2}}\abs{F'_{1}(x)+F_{2}'(x)}^{2}dx\\\nonumber
&=2\int_{\T}(f_{1}(z(\alpha))+f_{2}(z(\alpha)))(\nabla f_{1}(z(\alpha))+\nabla f_{2}(z(\alpha)))\cdot\frac{\partial_{\alpha}^{\bot}z(\alpha)}{\abs{\partial_{\alpha}z(\alpha)}}d\alpha\\\nonumber
&-2\int_{\T}(f_{1}^{+}(h(\alpha))+f_{2}^{+}(h(\alpha)))(\nabla f_{1}^{+}(h(\alpha))+\nabla f_{2}^{+}(h(\alpha)))\cdot\frac{\partial_{\alpha}^{\bot}h(\alpha)}{\abs{\partial_{\alpha}h(\alpha)}}d\alpha\\\nonumber
&=2\int_{\T}(u(\alpha)+T_{1}^{*}(u)(\alpha)+T_{3}^{*}(v)(\alpha))M(u,v,h,z)d\alpha\\\nonumber
&+2\int_{\T}(v(\alpha)-T_{2}^{*}(u)(\alpha)-T_{4}^{*}(v)(\alpha))(2BR(\partial_{\alpha}u,z)_{h}+2BR(\partial_{\alpha}v,h)_{h})\cdot\frac{\partial_{\alpha}^{\bot}z(\alpha)}{\abs{\partial_{\alpha}z(\alpha)}}d\alpha\\\nonumber
&=\int_{\T}(u(\alpha)+\lambda u(\alpha))M(u,v,h,z)d\alpha+2\int_{\T}(v(\alpha)-\lambda v(\alpha))N(u,v,h,z)\\\nonumber
&=(1+\lambda)\int_{\T}u(\alpha)M(u,v,h,z)d\alpha+(1-\lambda)\int_{\T}v(\alpha)N(u,v,h,z)d\alpha\\\nonumber
&\equiv (1+\lambda)A+(1-\lambda)B
\end{align}
and
\begin{align}
\label{lambda3}
&0<\int_{\Omega_{3}}\abs{F'_{1}(x)+F_{2}'(x)}^{2}dx\\\nonumber
&=2\int_{\T}(f_{1}^{-}(h(\alpha))+f_{2}^{-}(h(\alpha)))(\nabla f_{1}^{-}(h(\alpha))+\nabla f_{2}^{-}(h(\alpha)))\cdot\frac{\partial_{\alpha}^{\bot}h(\alpha)}{\abs{\partial_{\alpha}h(\alpha)}}d\alpha\\\nonumber
&=2\int_{\T}(v(\alpha)+T_{2}^{*}(u)(\alpha)+T_{4}^{*}(v)(\alpha))N(u,v,h,z)d\alpha\\\nonumber
&=\int_{\T}(v(\alpha)+\lambda v(\alpha))N(u,v,h,z)d\alpha=(1+\lambda)B
\end{align}
where we have used (\ref{f1})-(\ref{nf3}). 
Suppose that $\abs{\lambda}\ge 1$ then $\lambda\in(-\infty,-1]\cup[1,\infty)$:
\begin{itemize}
\item[$\to$] If $\lambda\in(-\infty,-1]$ then
\begin{itemize}
\item[i)]For (\ref{lambda1}) we get that $A>0$.
\item[ii)]For (\ref{lambda3}) we get that $B<0$ and $\lambda\neq -1$.
\item[iii)]Therefore, (\ref{lambda2}) is a contradiction.
\end{itemize}
\item[$\to$] If $\lambda\in[1,\infty)$
\begin{itemize}
\item[i)]For (\ref{lambda1}) we get that $A<0$ and $\lambda\neq 1$.
\item[ii)]For (\ref{lambda3}) we get that $B>0$.
\item[iii)]Therefore, (\ref{lambda2}) is a contradiction.
\end{itemize}
\end{itemize}
Thus $\abs{\lambda}<1$. At this point, since $\OT^{*}$ is a compact operator, we know that there exists $(I-M\OT^{*})^{-1}$ for $M=\left (\begin{matrix} 
         \mu_{1} & 0  \\
         0 & \mu_{2} 
      \end{matrix}\right)$ with $\abs{\mu_{i}}<1$ for $i=1,2$.
      
      Our propose is to prove that $H^{\frac{1}{2}}$-norm of the inverse operator are bounded by $\exp(C\nor{z,h}^{2})$ where $\nor{z,h}^{2}=\norm{\F}^{2}_{L^{\infty}}+\norm{d(z,h)}^{2}_{L^{\infty}}+\norm{z}^{2}_{H^{3}}$.
      To prove that we will start with the following proposition:
\begin{prop}\label{norml2}
The norms $\norm{(I\pm\OT^{*})^{-1}}_{L^{2}_{0}}$ are bounded by $\exp(C\nor{z,h}^{2})$ for some universal constant $C$. Here the space $L^{2}_{0}$ is the usual $L^{2}$ with the extra condition of mean value zero.
\end{prop}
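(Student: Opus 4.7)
The plan is to combine Fredholm theory, to get qualitative invertibility, with a quantitative use of the three energy identities (\ref{lambda1})--(\ref{lambda3}) just employed in the eigenvalue analysis. For invertibility, Proposition \ref{estimaT} shows $\OT^{*}$ maps $L^{2}\times L^{2}$ boundedly into $H^{1}\times H^{1}$, so the Rellich--Kondrachov embedding makes it compact on $L^{2}\times L^{2}$. Combined with the strict eigenvalue bound $\abs{\lambda}<1$ established just above, the Fredholm alternative guarantees that $(I\pm\OT^{*})$ has trivial kernel on $L^{2}_{0}\times L^{2}_{0}$ and is therefore boundedly invertible there. The quantitative bound is what remains.

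Given $g=(g_{1},g_{2})\in L^{2}_{0}\times L^{2}_{0}$, I would set $w=(u,v):=(I\pm\OT^{*})^{-1}g$ and build the Cauchy integrals $F_{1},F_{2}$ from $(u,v)$ as in Subsection \ref{estiminversesect}, writing $f:=f_{1}+f_{2}$. Using (\ref{f1})--(\ref{f3}) together with the equation $(I\pm\OT^{*})w=g$, the boundary values of $f$ on $z^{\pm}$ and $h^{\pm}$ become explicit affine combinations of $u,v,g_{1},g_{2}$. Repeating the derivation of (\ref{lambda1})--(\ref{lambda3}) over $\Omega_{1},\Omega_{2},\Omega_{3}$ with these new boundary values produces three non-negative identities in which $g_{1},g_{2}$ appear in place of the eigenvalue factors $\lambda u,\lambda v$, and summing them the $g$-contributions cancel, leaving
\begin{displaymath}
0\le\int_{\R^{2}}|\nabla f|^{2}\, dx=2\int_{\T}u\, M(u,v)\, d\alpha+2\int_{\T}v\, N(u,v)\, d\alpha,
\end{displaymath}
where $M,N$ are, up to sign, the normal derivatives of $f$ on the curves $z,h$ respectively.

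To close the estimate, I would identify the total Dirichlet energy with the $H^{1/2}$-norm of the density jumps via a quantitative Dirichlet-to-Neumann argument, obtaining the coercivity $\norm{w}^{2}_{L^{2}}\le\exp(C\nor{z,h}^{2})\int_{\R^{2}}|\nabla f|^{2}dx$ on the mean-zero subspace, with constants tracked from the arc-chord and distance hypotheses. On the other hand, each of the three individual identities expresses part of the Dirichlet energy as a pairing of $g_{1}$ or $g_{2}$ against $M$ or $N$; Cauchy--Schwarz combined with trace bounds $\norm{M}_{L^{2}}+\norm{N}_{L^{2}}\le C(\nor{z,h})\norm{\nabla f}_{L^{2}(\R^{2})}$, derivable by the same kind of singular integral arguments as in Lemma \ref{tl2h1} and Proposition \ref{estimaT}, then yield $\int_{\R^{2}}|\nabla f|^{2}\le C(\nor{z,h})\norm{g}_{L^{2}}\norm{\nabla f}_{L^{2}(\R^{2})}$. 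Combining the two inequalities gives the claimed $\norm{w}_{L^{2}}\le\exp(C\nor{z,h}^{2})\norm{g}_{L^{2}}$.

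The hard part is precisely this quantitative coercivity: a Dirichlet-to-Neumann-type lower bound for the harmonic extension into the curvilinear strips $\Omega_{1},\Omega_{2},\Omega_{3}$, with constants depending only on $\norm{\F}_{L^{\infty}},\norm{\G}_{L^{\infty}},\norm{d(z,h)}_{L^{\infty}}$ and $\norm{z}_{H^{3}}$. Tracking these constants in the required exponential form over the two-curve geometry, where the densities live on two distinct and non-touching interfaces, is what makes the inversion estimate delicate.
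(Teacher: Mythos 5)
Your qualitative part (compactness of $\OT^{*}$ from Proposition \ref{estimaT} plus the spectral bound $\abs{\lambda}<1$, hence invertibility of $I\pm\OT^{*}$ on $L^{2}_{0}\times L^{2}_{0}$) is consistent with the paper, but the quantitative core of your argument has a genuine gap. First, the structural claim is not accurate: writing $(I-\OT^{*})w=g$, the Green identities over the three regions give $\int_{\Omega_{1}}\abs{\nabla f}^{2}\sim\int g_{1}M$, but $\int_{\Omega_{2}}\abs{\nabla f}^{2}\sim\int(2u-g_{1})M+\int g_{2}N$ and $\int_{\Omega_{3}}\abs{\nabla f}^{2}\sim\int(2v-g_{2})N$, so after summation the total Dirichlet energy equals a quadratic form in $(u,v)$ (namely $\int uM+\int vN$), not a pairing against $g$; only part of the energy pairs purely with $g$, and the individual boundary contributions of $\Omega_{2}$ are not separately nonnegative, so you cannot isolate them. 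Second, and more fatally, the trace bound $\norm{M}_{L^{2}}+\norm{N}_{L^{2}}\le C\norm{\nabla f}_{L^{2}(\R^{2})}$ is false by half a derivative: for a harmonic function of finite Dirichlet energy the normal derivative on the boundary lies only in $H^{-1/2}$, and here $M,N$ involve $BR(\partial_{\alpha}u,z)$, $BR(\partial_{\alpha}v,h)$ with $u,v$ merely in $L^{2}$. The inequality your Cauchy--Schwarz step would imply, $\norm{\nabla f}_{L^{2}(\R^{2})}\le C\norm{g}_{L^{2}}$, is likewise false (already for one flat interface the Dirichlet energy is the $\dot H^{1/2}$ norm of the one-sided trace, not its $L^{2}$ norm); indeed for $w\in L^{2}_{0}$ only, the Dirichlet energy need not even be finite, so the whole chain through $\int\abs{\nabla f}^{2}$ cannot produce an $L^{2}_{0}\to L^{2}_{0}$ bound.

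The paper avoids the bulk energy entirely. It proves the two-sided ratio estimate $e^{-C\nor{z,h}^{2}}\le\norm{\varpi-\OT^{*}\varpi}_{L^{2}_{0}}/\norm{\varpi+\OT^{*}\varpi}_{L^{2}_{0}}\le e^{C\nor{z,h}^{2}}$, which immediately yields the bound on $(I\pm\OT^{*})^{-1}$. To do so it identifies $\varpi+\OT^{*}\varpi$ and $\varpi-\OT^{*}\varpi$ with the one-sided boundary values $(f_{1}^{-}(z)+f_{2}(z),\,f_{1}(h)+f_{2}^{-}(h))$ and $-(f_{1}^{+}(z)+f_{2}(z),\,f_{1}(h)+f_{2}^{+}(h))$, and then uses the continuity of the harmonic conjugates $g_{1},g_{2}$ across both curves to express the data on one side in terms of the data on the other through the operators $\mathcal{H}_{1},\mathcal{H}_{2}^{z},\mathcal{H}_{2}^{h},\mathcal{H}_{3}$: explicit Fourier multipliers ($\sinh/\cosh$, $1/\cosh$ type) in the flat strip, conjugated by the conformal maps $\phi_{i}$, whose $L^{2}$ norms are bounded by $e^{C\nor{z,h}^{2}}$ following the Hilbert-operator estimates of \cite{hele} (tangent interior balls of radius $\ge C\nor{z,h}^{-1}$). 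This argument works entirely at the level of $L^{2}$ boundary data, which is exactly what your Dirichlet-to-Neumann route cannot reach at this regularity; if you want to salvage your approach you would have to replace the $L^{2}$ pairings by $H^{1/2}$--$H^{-1/2}$ duality, and then you would be proving an $H^{1/2}$ bound rather than the stated $L^{2}_{0}$ bound.
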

\begin{proof}
The proof follows if we demostrate the estimate
\begin{equation}
\label{esti}
e^{-C\nor{z,h}^{2}}\le\frac{\norm{\varpi-\OT^{*}\varpi}_{L^{2}_{0}}}{\norm{\varpi+\OT^{*}\varpi}_{L^{2}_{0}}}\le e^{C\nor{z,h}^{2}}
\end{equation}
valid for every nonzero $\varpi\in L^{2}_{0}\times L^2_{0}$.

This is because if we assume $\norm{\varpi-\OT^{*}\varpi}_{L^{2}_{0}}\le e^{-2C\nor{z,h}^{2}}$ for some $\norm{\varpi}_{L^{2}_{0}}=1$ then we obtain $\norm{\varpi+\OT^{*}\varpi}_{L^{2}_{0}}\ge 2\norm{\varpi}_{L^{2}_{0}}-e^{-2C\nor{z,h}^{2}}\ge 1$ wich contradicts \ref{esti}. Therefore we must have $\norm{\varpi-\OT^{*}\varpi}_{L^{2}_{0}}\ge e^{-2C\nor{z,h}^{2}}$ for all $\norm{\varpi}_{L^{2}_{0}}=1$ i.e. $\norm{(I-\OT^{*})^{-1}}_{L^{2}_{0}}\le e^{2C\nor{z,h}^{2}}$. Similarly we also have $\norm{(I+\OT^{*})^{-1}}_{L^{2}_{0}}\le e^{2C\nor{z,h}^{2}}$.

Since
\begin{displaymath}
\varpi+\OT^{*}\varpi=\left (\begin{matrix} u+T^{*}_{1}u+T^{*}_{3}v\\
																v+T_{2}^{*}u+T_{4}^{*}v
																\end{matrix}\right)=\left (\begin{matrix} f_{1}^{-}(z(\alpha))+f_{2}(z(\alpha))\\
																f_{1}(h(\alpha))+f_{2}^{-}(h(\alpha))
																\end{matrix}\right)\equiv\left(\begin{matrix}
																m^{+}\\
w																\end{matrix}\right)
\end{displaymath}
\begin{displaymath}
\varpi-\OT^{*}\varpi=\left (\begin{matrix} u-T^{*}_{1}u-T^{*}_{3}v\\
																v-T_{2}^{*}u-T_{4}^{*}v
																\end{matrix}\right)=(-1)\left (\begin{matrix} f_{1}^{+}(z(\alpha))+f_{2}(z(\alpha))\\
																f_{1}(h(\alpha))+f_{2}^{+}(h(\alpha))
																\end{matrix}\right)\equiv(-1)\left(\begin{matrix}
																f\\
																m^{-}
																\end{matrix}\right)
\end{displaymath}
Next we will see that we can write the above function as some operators, which we call $\mathcal{H}_{i}$ for $i=1,2,3$,  
where $i$ denotes the corresponding domain $\Omega_{1}$, $\Omega_{2}$, and $\Omega_{3}$(See Subsection \ref{operhi}). The relations with these operator are:
\begin{align*}
&m^{+}=\mathcal{H}_{2}^{z}(f,m^{-}),\\
&w=\mathcal{H}_{3}(f,m^{-}),\\
&f=\mathcal{H}_{1}(m^{+},w),\\
&m^{-}=\mathcal{H}_{2}^{h}(m^{+},w).
\end{align*}
And we will prove that
\begin{displaymath}
\norm{\mathcal{H}_{i}(\varpi)}_{L^{2}}\le e^{C\nor{z,h}^{2}}\norm{\varpi}_{L^{2}},
\end{displaymath}
where $C$ denotes a universal constant not necessarily the same at each occurrence.
With all these assumptions, the proof is as follows:
\begin{align*}
&\norm{\varpi+\OT^{*}\varpi}_{L^{2}_{0}}=\norm{\left(\begin{matrix}
\mathcal{H}_{2}^{z}(f,m^{-})\\
\mathcal{H}_{3}(f,m^{-})
\end{matrix}\right)}_{L^{2}_{0}}\le e^{C\nor{z,h}^{2}}\norm{\left(\begin{matrix}
f\\
m^{-}
\end{matrix}\right)}_{L^{2}_{0}}\\
&=e^{2C\nor{z,h}^{2}}\norm{\varpi-\OT^{*}\varpi}_{L^{2}_{0}}.
\end{align*}
In the same way,
\begin{align*}
&\norm{\varpi-\OT^{*}\varpi}_{L^{2}_{0}}=\norm{\left(\begin{matrix}
\mathcal{H}_{1}(m^{+},w)\\
\mathcal{H}_{2}^{h}(m^{+},w)
\end{matrix}\right)}_{L^{2}_{0}}\le e^{C\nor{z,h}^{2}}\norm{\left(\begin{matrix}
m^{+}\\
w
\end{matrix}\right)}_{L^{2}_{0}}\\
&=e^{2C\nor{z,h}^{2}}\norm{\varpi+\OT^{*}\varpi}_{L^{2}_{0}}.
\end{align*}

\end{proof}
Once we have the estimation of $(I\pm\OT^{*})^{-1}$, we introduce the term $M=\left (\begin{matrix} 
         \mu_{1} & 0  \\
         0 & \mu_{2} 
      \end{matrix}\right)$ with $\abs{\mu_{i}}<1$ for all $i=1,2$.
      \begin{lem}\label{l2conm}
      The following estimate holds:
      \begin{displaymath}
      \norm{(I+ M\OT^{*})^{-1}}_{L^{2}_{0}}\le e^{C\nor{z,h}^{2}}
      \end{displaymath}
      for a universal constant $C$ and $\abs{\mu_{i}}\le 1$ for $i=1,2$.
      \end{lem}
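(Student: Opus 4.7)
The plan is to reduce the bound for $(I+M\OT^{*})^{-1}$ to the already-handled cases $M=\pm I$ of Proposition \ref{norml2}. The starting point is the algebraic identity
\begin{displaymath}
I + M\OT^{*} = \frac{I+M}{2}(I+\OT^{*}) + \frac{I-M}{2}(I-\OT^{*}),
\end{displaymath}
which one verifies by direct computation. Because $\abs{\mu_{i}}\le 1$, both diagonal matrices $\frac{I\pm M}{2}$ are positive semidefinite with entries in $[0,1]$ summing to the identity, so $I+M\OT^{*}$ becomes a convex combination of $I+\OT^{*}$ and $I-\OT^{*}$.

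Fix $\varpi\in L^{2}_{0}\times L^{2}_{0}$ and set $g:=(I+M\OT^{*})\varpi$. With the notation of the proof of Proposition \ref{norml2}, write $\varpi+\OT^{*}\varpi=(m^{+},w)$ and $\varpi-\OT^{*}\varpi=(-f,-m^{-})$, so $u=(m^{+}-f)/2$ and $v=(w-m^{-})/2$. Inserting these into the decomposition above yields, component-wise,
\begin{displaymath}
g_{1}=\tfrac{1+\mu_{1}}{2}m^{+}-\tfrac{1-\mu_{1}}{2}f,\qquad g_{2}=\tfrac{1+\mu_{2}}{2}w-\tfrac{1-\mu_{2}}{2}m^{-}.
\end{displaymath}
The couplings $m^{+}=\mathcal{H}_{2}^{z}(f,m^{-})$, $w=\mathcal{H}_{3}(f,m^{-})$, $f=\mathcal{H}_{1}(m^{+},w)$, $m^{-}=\mathcal{H}_{2}^{h}(m^{+},w)$ from Proposition \ref{norml2}, each with $L^{2}_{0}$-operator norm at most $e^{C\nor{z,h}^{2}}$, then produce two equivalent closed linear systems expressing $g$ linearly in either $(f,m^{-})$ or $(m^{+},w)$.

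My aim is to invert the resulting system with a bound of the form $e^{C\nor{z,h}^{2}}$ and then combine with $\norm{(I\pm\OT^{*})^{-1}}_{L^{2}_{0}}\le e^{C\nor{z,h}^{2}}$ (Proposition \ref{norml2}) to recover $\varpi$ from $(m^{+},w)$ or $(f,m^{-})$. The cleanest packaging is to prove a two-sided ratio estimate
\begin{displaymath}
e^{-C\nor{z,h}^{2}}\le\frac{\norm{\varpi+M\OT^{*}\varpi}_{L^{2}_{0}}}{\norm{\varpi+\OT^{*}\varpi}_{L^{2}_{0}}}\le e^{C\nor{z,h}^{2}},
\end{displaymath}
parallel to (\ref{esti}), and chain it with (\ref{esti}) to deduce the desired $L^{2}_{0}$-bound on $(I+M\OT^{*})^{-1}$.

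The hard part is the uniform invertibility of the coupled system as $M$ ranges over diagonal matrices with $\abs{\mu_{i}}\le 1$, because the coefficients $(1\pm\mu_{i})/2$ degenerate at $\mu_{i}=\pm 1$. Those two degenerate extremes coincide exactly with the settings $M=\pm I$ already handled in Proposition \ref{norml2}. For intermediate $M$, the observation is that for each component $i$ at least one of $\tfrac{1+\mu_{i}}{2}$ and $\tfrac{1-\mu_{i}}{2}$ is at least $1/2$; I would then pick, component by component, the formulation of the system whose diagonal part is nondegenerate, using the $\mathcal{H}_{i}$ bounds to control the off-diagonal contributions. Since all constants produced by this procedure depend only on $\nor{z,h}^{2}$ and not on $M$, one recovers $\norm{(I+M\OT^{*})^{-1}}_{L^{2}_{0}}\le e^{C\nor{z,h}^{2}}$.
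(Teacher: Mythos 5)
Your convex-combination identity $I+M\OT^{*}=\frac{I+M}{2}(I+\OT^{*})+\frac{I-M}{2}(I-\OT^{*})$ and the resulting component formulas for $g=(I+M\OT^{*})\varpi$ are correct, and the upper half of your ratio estimate does follow from (\ref{esti}). But the heart of Lemma \ref{l2conm} is the lower bound $\norm{(I+M\OT^{*})\varpi}_{L^{2}_{0}}\ge e^{-C\nor{z,h}^{2}}\norm{\varpi}_{L^{2}_{0}}$ uniformly over all diagonal $M$ with $\abs{\mu_{i}}\le 1$, and at exactly this point your argument is only a sketch whose mechanism does not work. You propose to invert the coupled system by choosing, component by component, the formulation whose diagonal coefficient $\tfrac{1\pm\mu_{i}}{2}$ is at least $1/2$ and then ``using the $\mathcal{H}_{i}$ bounds to control the off-diagonal contributions.'' The only available bound on the $\mathcal{H}_{i}$ is $\norm{\mathcal{H}_{i}}_{L^{2}}\le e^{C\nor{z,h}^{2}}$, which is an upper bound by an enormous quantity, not a smallness estimate; an off-diagonal block of size $e^{C\nor{z,h}^{2}}$ multiplied by a coefficient that may be as large as $1/2$ cannot be absorbed by a diagonal coefficient that is merely $\ge 1/2$, so no Neumann-series or diagonal-dominance inversion closes. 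Moreover, the degenerate extremes are \emph{not} only $M=\pm I$: mixed choices such as $M=\operatorname{diag}(1,-1)$ (which do occur, since $\gamma_{1}$ and $\gamma_{2}$ may have opposite signs) make one component degenerate in each of your two formulations simultaneously, and Proposition \ref{norml2} gives you no bound for these mixed boundary-value configurations.

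For comparison, the paper obtains the uniformity in $M$ by splitting the parameter range rather than the operator: when $1-e^{-C_{1}\nor{z,h}^{2}}\le\abs{\mu_{i}}\le 1$ it writes $I+M\OT^{*}=M(I\pm\OT^{*})+(I\mp M)$, where the perturbation $I\mp M$ has norm at most $e^{-C_{1}\nor{z,h}^{2}}$ and can therefore be absorbed against the bound on $(I\pm\OT^{*})^{-1}$ from Proposition \ref{norml2}; when $\abs{\mu_{i}}\le 1-e^{-C_{1}\nor{z,h}^{2}}$ it uses the Neumann series $(I+M\OT^{*})^{-1}=\sum_{n}(M\OT^{*})^{n}$, where the smallness $\norm{M\OT^{*}}_{L^{2}_{0}}\le 1-e^{-C_{1}\nor{z,h}^{2}}$ comes from the factor $1-\abs{\mu_{i}}$ itself, and the series sums to $e^{C_{1}\nor{z,h}^{2}}$. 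In both regimes the needed smallness is supplied by the distance of $\abs{\mu_{i}}$ to $1$, never by a smallness claim on $\OT^{*}$ or the $\mathcal{H}_{i}$; your plan would need to be reorganized along these lines (or supplemented with genuinely new mixed-boundary operators and quantitative bounds for them) before it yields the stated estimate.
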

      \begin{proof}
      If we look at the identity $I+ M\OT^{*}=M(I+\OT^{*})+(I-MI)$, using the estimate on proposition \ref{norml2} we can conclude that 
      \begin{displaymath}
      \norm{(I+ M\OT^{*})^{-1}}_{L^{2}_{0}}\le\exp{C\nor{z,h}^{2}}
      \end{displaymath}
      for $1-e^{-C_{1}\nor{z,h}^{2}}\le\abs{\mu_{i}}\le 1$.
      
      For $\abs{\mu_{i}}\le 1-e^{-C_{1}\nor{z,h}^{2}}$:
      
      Since $\norm{M\OT^{*}}_{L^{2}}<1$ then we can write $(I+ M\OT^{*})^{-1}=\sum_{n}(M\OT^{*})^{n}$. Taking norms,
      \begin{displaymath}
      \norm{(I+ M\OT^{*})^{-1}}_{L^{2}_{0}}\le\sum_{n}\norm{M\OT^{*}}^{n}_{L^{2}_{0}}\le\sum_{n}(1-e^{-C_{1}\nor{z,h}^{2}})^{n}=e^{C_{1}\nor{z,h}^{2}}
      \end{displaymath}
      \end{proof}
     
Now we are in position to prove the $H^{\frac{1}{2}}$-norm,

\begin{prop}
\label{invertoperhunmedio}
For $\mu_{i}\le 1$ the following estimate holds
\begin{displaymath}
\norm{(I+M\OT)^{-1}}_{H^{\frac{1}{2}}_{0}}=\norm{(I+M\OT^{*})^{-1}}_{H^{\frac{1}{2}}_{0}}\le e^{C\nor{z,h}^{2}},\\
\end{displaymath}
where $C$ is a universal constant and $M=\left(\begin{matrix}
\mu_{1} & 0\\
0 & \mu_{2}
\end{matrix}\right)$.
\end{prop}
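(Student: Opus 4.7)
The plan is to bootstrap the $L^{2}_{0}$ bound of Lemma~\ref{l2conm} up to an $H^{1}_{0}$ bound using Proposition~\ref{estimaT}, and then interpolate to reach $H^{\frac{1}{2}}_{0}$.

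For the bootstrap, fix $\varpi\in H^{1}_{0}\times H^{1}_{0}$ and set $\phi=(I+M\OT^{*})^{-1}\varpi$, so that $\phi+M\OT^{*}\phi=\varpi$ with $\phi\in L^{2}_{0}\times L^{2}_{0}$. Lemma~\ref{l2conm} already gives $\norm{\phi}_{L^{2}_{0}}\le e^{C\nor{z,h}^{2}}\norm{\varpi}_{L^{2}_{0}}$. The constant in Proposition~\ref{estimaT} is a polynomial in $\norm{\F}_{L^{\infty}}$, $\norm{\G}_{L^{\infty}}$, $\norm{d(z,h)}_{L^{\infty}}$ and the $\mathcal{C}^{2,\delta}$ norms of $z$ and $h$; using the Sobolev embedding $H^{3}\hookrightarrow\mathcal{C}^{2,\delta}$ for $\delta$ small and the elementary bound $x^{k}\le e^{Cx^{2}}$, this polynomial is absorbed into $e^{C\nor{z,h}^{2}}$. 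Since $\abs{\mu_{i}}\le 1$, rewriting $\phi=\varpi-M\OT^{*}\phi$ yields
\begin{equation*}
\norm{\phi}_{H^{1}_{0}}\le\norm{\varpi}_{H^{1}_{0}}+\norm{M\OT^{*}\phi}_{H^{1}_{0}}\le\norm{\varpi}_{H^{1}_{0}}+e^{C\nor{z,h}^{2}}\norm{\phi}_{L^{2}_{0}}\le e^{C\nor{z,h}^{2}}\norm{\varpi}_{H^{1}_{0}}.
\end{equation*}
Hence $(I+M\OT^{*})^{-1}$ is bounded on both $L^{2}_{0}$ and $H^{1}_{0}$ with the desired exponential constant.

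For the second step, complex interpolation identifies $[L^{2}_{0},H^{1}_{0}]_{1/2}=H^{\frac{1}{2}}_{0}$ (the mean-zero constraint is trivially compatible with fractional derivatives, since it is the kernel of a bounded functional that survives interpolation). The standard interpolation inequality for operators then provides
\begin{equation*}
\norm{(I+M\OT^{*})^{-1}}_{H^{\frac{1}{2}}_{0}}\le\norm{(I+M\OT^{*})^{-1}}_{L^{2}_{0}}^{1/2}\norm{(I+M\OT^{*})^{-1}}_{H^{1}_{0}}^{1/2}\le e^{C\nor{z,h}^{2}}.
\end{equation*}
The identical argument, with Lemma~\ref{tl2h1} replacing Proposition~\ref{estimaT} in the bootstrap step, yields the same bound for $(I+M\OT)^{-1}$; alternatively, since $M$ is diagonal with real entries, the $L^{2}$-adjoint of $I+M\OT$ is $I+\OT^{*}M$, and duality in the Hilbert space $H^{\frac{1}{2}}_{0}$ equates the two operator norms.

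The main obstacle I anticipate is essentially bookkeeping: verifying carefully that the product of several $L^{\infty}$-type and $\mathcal{C}^{2,\delta}$ factors appearing in Proposition~\ref{estimaT} truly fits inside the single exponential $e^{C\nor{z,h}^{2}}$, and that the constants generated at the interpolation step (which depend on the equivalence of norms between the interpolation space and $H^{\frac{1}{2}}_{0}$ on the mean-zero subspace) are universal and do not introduce an additional dependence on $z$ or $h$. Once these checks are made, the three displayed inequalities close the argument.
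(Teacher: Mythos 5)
Your bound for $(I+M\OT^{*})^{-1}$ is correct, but the mechanism differs from the paper's. The paper does not pass through an $H^{1}_{0}$ bound and interpolation: it proves directly the two-sided comparison $e^{-C\nor{z,h}^{2}}\le \norm{\varpi-M\OT^{*}\varpi}_{H^{\frac{1}{2}}_{0}}/\norm{\varpi+M\OT^{*}\varpi}_{H^{\frac{1}{2}}_{0}}\le e^{C\nor{z,h}^{2}}$, by writing $\Lambda^{\frac{1}{2}}(\varpi+M\OT^{*}\varpi)=\Lambda^{\frac{1}{2}}(\varpi-M\OT^{*}\varpi)+2M\Lambda^{\frac{1}{2}}(\OT^{*}\varpi)$, controlling the extra term with the smoothing estimate $\norm{\OT^{*}\varpi}_{H^{1}}\le e^{C\nor{z,h}^{2}}\norm{\varpi}_{L^{2}}$ of Proposition \ref{estimaT}, and then using Lemma \ref{l2conm} in the form $\norm{\varpi}_{L^{2}_{0}}\le e^{C\nor{z,h}^{2}}\norm{\varpi-M\OT^{*}\varpi}_{L^{2}_{0}}$. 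Your bootstrap uses exactly the same two ingredients ($L^{2}_{0}$ invertibility plus $L^{2}\to H^{1}$ smoothing), so the analytic content is the same; what interpolation buys is that you never manipulate $\Lambda^{\frac{1}{2}}$ of the equation, at the price of invoking $[L^{2}_{0},H^{1}_{0}]_{1/2}=H^{\frac{1}{2}}_{0}$, which is legitimate here since the mean-zero projection is bounded uniformly on the whole scale, so the constants are universal. The absorption of the polynomial factors of Proposition \ref{estimaT} into $e^{C\nor{z,h}^{2}}$ via $H^{3}\hookrightarrow\mathcal{C}^{2,\delta}$ is also what the paper does implicitly.

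The one soft spot is the passage from $\OT^{*}$ to $\OT$. Running the ``identical argument'' with Lemma \ref{tl2h1} requires, as input, an $L^{2}_{0}$ bound for $(I+M\OT)^{-1}$, and Lemma \ref{l2conm} only supplies it for $(I+M\OT^{*})^{-1}$; your duality fallback is not correct as stated, because the $L^{2}$-adjoint of $I+M\OT$ is $I+\OT^{*}M$ (not $I+M\OT^{*}$: $M$ does not commute with $\OT^{*}$ when $\mu_{1}\neq\mu_{2}$), and Hilbert-space duality in $H^{\frac{1}{2}}_{0}$ relates an operator to its $H^{\frac{1}{2}}$-adjoint, not to its $L^{2}$-adjoint. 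The gap is fixable: $L^{2}$ duality gives $\norm{(I+M\OT)^{-1}}_{L^{2}_{0}}=\norm{(I+\OT^{*}M)^{-1}}_{L^{2}_{0}}$, and the resolvent identity $(I+\OT^{*}M)^{-1}=I-\OT^{*}(I+M\OT^{*})^{-1}M$ together with Proposition \ref{estimaT} and Lemma \ref{l2conm} bounds the latter by $e^{C\nor{z,h}^{2}}$; with that $L^{2}_{0}$ bound your bootstrap and interpolation go through verbatim for $I+M\OT$ using Lemma \ref{tl2h1}. To be fair, the paper is no more careful on this point: its proof only treats $\OT^{*}$, and the asserted equality of the two $H^{\frac{1}{2}}_{0}$ norms in the statement is left unjustified.
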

\begin{proof}
We will use the same idea as in Proposition \ref{norml2}, therefore we are going to prove:
\begin{equation*}
e^{-C\nor{z,h}^{2}}\le\frac{\norm{\varpi-\OT^{*}\varpi}_{H^{\frac{1}{2}}_{0}}}{\norm{\varpi+\OT^{*}\varpi}_{H^{\frac{1}{2}}_{0}}}\le e^{C\nor{z,h}^{2}}.
\end{equation*}
To do that, using (\ref{estimaT}) and $\abs{\mu_{i}}<1$, 
\begin{align*}
&\norm{\Lambda^{\frac{1}{2}}(\varpi+M\OT^{*}\varpi)}_{L^{2}_{0}}\le\norm{\Lambda^{\frac{1}{2}}(\varpi-M\OT^{*}\varpi)}_{L^{2}_{0}}+2\norm{M\Lambda^{\frac{1}{2}}(\OT^{*}\varpi)}_{L^{2}_{0}}\\\
&\le\norm{\Lambda^{\frac{1}{2}}(\varpi-M\OT^{*}\varpi)}_{L^{2}_{0}}+2\norm{\OT^{*}\varpi)}_{H^{1}}\\
&\le\norm{\Lambda^{\frac{1}{2}}(\varpi-M\OT^{*}\varpi)}_{L^{2}_{0}}+e^{C\nor{z,h}^{2}}\norm{\varpi}_{L^{2}_{0}}.
\end{align*}
Using the estimate of Lemma \ref{l2conm}, 
\begin{displaymath}
\norm{\varpi}_{L^{2}_{0}}=\norm{(I-M\OT^{*})^{-1}(I-M\OT^{*})\varpi}\le e^{C\nor{z,h}^{2}}\norm{\varpi-M\OT^{*}\varpi}_{L^{2}_{0}}.
\end{displaymath}
Therefore,
\begin{displaymath}
\norm{\varpi+M\OT^{*}\varpi}_{H^{\frac{1}{2}}_{0}}\le e^{C\nor{z,h}^{2}}\norm{\varpi-M\OT^{*}\varpi}_{H^{\frac{1}{2}}_{0}}
\end{displaymath}
Analogously, we get
\begin{displaymath}
\norm{\varpi-M\OT^{*}\varpi}_{H^{\frac{1}{2}}_{0}}\le e^{C\nor{z,h}^{2}}\norm{\varpi+M\OT^{*}\varpi}_{H^{\frac{1}{2}}_{0}}
\end{displaymath}
and we finish the proof.
\end{proof}

\subsection{$\mathcal{H}_{i}$ Operators}\label{operhi}

The truth of the above results depend on the existence of the $\mathcal{H}_{i}$ Operators which we have denoted on Proposition \ref{norml2}.

We will start with considering a flat domain, where the boundaries are $(x,0)$ and $(x,1)$.

Let be $F$ a harmonic function, decaying at infinity, above $(x,1)$ such that
\begin{displaymath}
\left\{ \begin{array}{ll}
\Delta F=0\\
F(x,1)=f(x)
\end{array} \right.
\end{displaymath}
Taking Fourier transform, we can get $\hat{F}(\xi,y)=e^{-\abs{\xi}(y-1)}\hat{f}(\xi)$.

Now, if we calculate the harmonic conjugate,which we will call $G$, we can get $\hat{G}(\xi,y)=-i sign(\xi)\hat{f}(\xi)e^{-\abs{\xi}(y-1)}$. And therefore, $\hat{G}(\xi,1)=-i sign(\xi)\hat{f}(\xi)$.

Now we consider between the boundaries the harmonic function $M$ such that,
\begin{displaymath}
\left\{ \begin{array}{ll}
\Delta M=0\\
M(x,1)=m^{+}(x)\\
M(x,0)=m^{-}(x)
\end{array} \right.
\end{displaymath}
Taking Fourier Transform and computing the harmonic conjugate, we get\\ $\hat{N}(\xi,y)=iA\cosh(\xi y)+iB\sinh(\xi y)$.

At the end, we want to relate these harmonic function with ours $F_{i}$ described at the Subsection \ref{estiminversesect}. We saw that $g_{i}^{+}(z(\alpha))=g_{i}^{-}(z(\alpha))$ and $g_{i}^{+}(h(\alpha))=g_{i}^{-}(h(\alpha))$ for $i=1,2$.  That is why we consider now $G(x,1)=N(x,1)$ and before $N(x,0)=R(x,0)$.

Therefore, since
\begin{align*}
&\hat{N}(\xi,1)=iA\cosh(\xi)+iB\sinh(\xi)=\hat{G}(\xi,1)=-i sign(\xi)\hat{f}(\xi),\\
&\hat{M}(\xi,0)=B=\hat{m}^{-}(\xi)
\end{align*}
then
\begin{displaymath}
A=\frac{sign(\xi)\hat{f}(\xi)-\hat{m}^{-}(\xi)\sinh(\xi)}{\cosh(\xi)}
\end{displaymath}
and
\begin{displaymath}
\hat{m}^{+}(\xi)=\hat{M}(\xi,1)=\frac{sign(\xi)\hat{f}(\xi)\sinh(\xi)+\hat{m}^{-}(\xi)}{\cosh(\xi)}.
\end{displaymath}
Moreover, 
\begin{displaymath}
\hat{N}(\xi,0)=\frac{isign(\xi)\hat{f}(\xi)-i\hat{m}^{-}(\xi)\sinh(\xi)}{\cosh(\xi)}.
\end{displaymath}

Finally, we consider an harmonic function $W$ below $(x,0)$ in such a way that
 
\begin{displaymath}
\left\{ \begin{array}{ll}
\Delta W=0\\
W(x,0)=w(x)
\end{array} \right.
\end{displaymath}

With the same procedure as before, we get the harmonic conjugate $\hat{R}(\xi,y)=isign(\xi)\hat{w}(\xi)e^{\abs{x}y}$.
Since, $\hat{R}(\xi,0)=\hat{N}(\xi,0)$ therefore 
\begin{displaymath}
\hat{w}(\xi)=\frac{\hat{f}(\xi)-sign(\xi)\hat{m}^{-}(\xi)\sinh(\xi)}{\cosh(\xi)}.
\end{displaymath}

Thus we just put $\hat{m}^{+}$ and $\hat{w}$ as a function of $\hat{f}$ and $\hat{m}^{-}$. We do this going from the top of the domain to the bottom. If we do the same going from the bottom to the top we will obtain 
\begin{align*}
&\hat{f}(\xi)=\frac{-\hat{w}(\xi)-sign(\xi)\hat{m}^{+}(\xi)\sinh(\xi)}{\cosh(\xi)},\\
&\hat{m}^{-}(\xi)=\frac{\hat{m}^{+}(\xi)-sign(\xi)\hat{w}(\xi)\sinh(\xi)}{\cosh(\xi)}.
\end{align*}

Here we define ours operators like:
\begin{align*}
&\widehat{H_{1}(m^{+},w)}=\frac{-\hat{w}(\xi)-sign(\xi)\hat{m}^{+}(\xi)\sinh(\xi)}{\cosh(\xi)},\\
&\widehat{H_{2}^{h}(m^{+},w)}=\frac{\hat{m}^{+}(\xi)-sign(\xi)\hat{w}(\xi)\sinh(\xi)}{\cosh(\xi)},\\
&\widehat{H_{2}^{z}(f,m^{-})}=\frac{\hat{m}^{-}(\xi)+sign(\xi)\hat{f}(\xi)\sinh(\xi)}{\cosh(\xi)},\\
&\widehat{H_{3}(f,m^{-})}=\frac{\hat{f}(\xi)-sign(\xi)\hat{f}(\xi)\sinh(\xi)}{\cosh(\xi)},
\end{align*}
which are bounded in $L^{2}$.
\begin{figure}[htb]
\centering
\includegraphics[width=65mm]{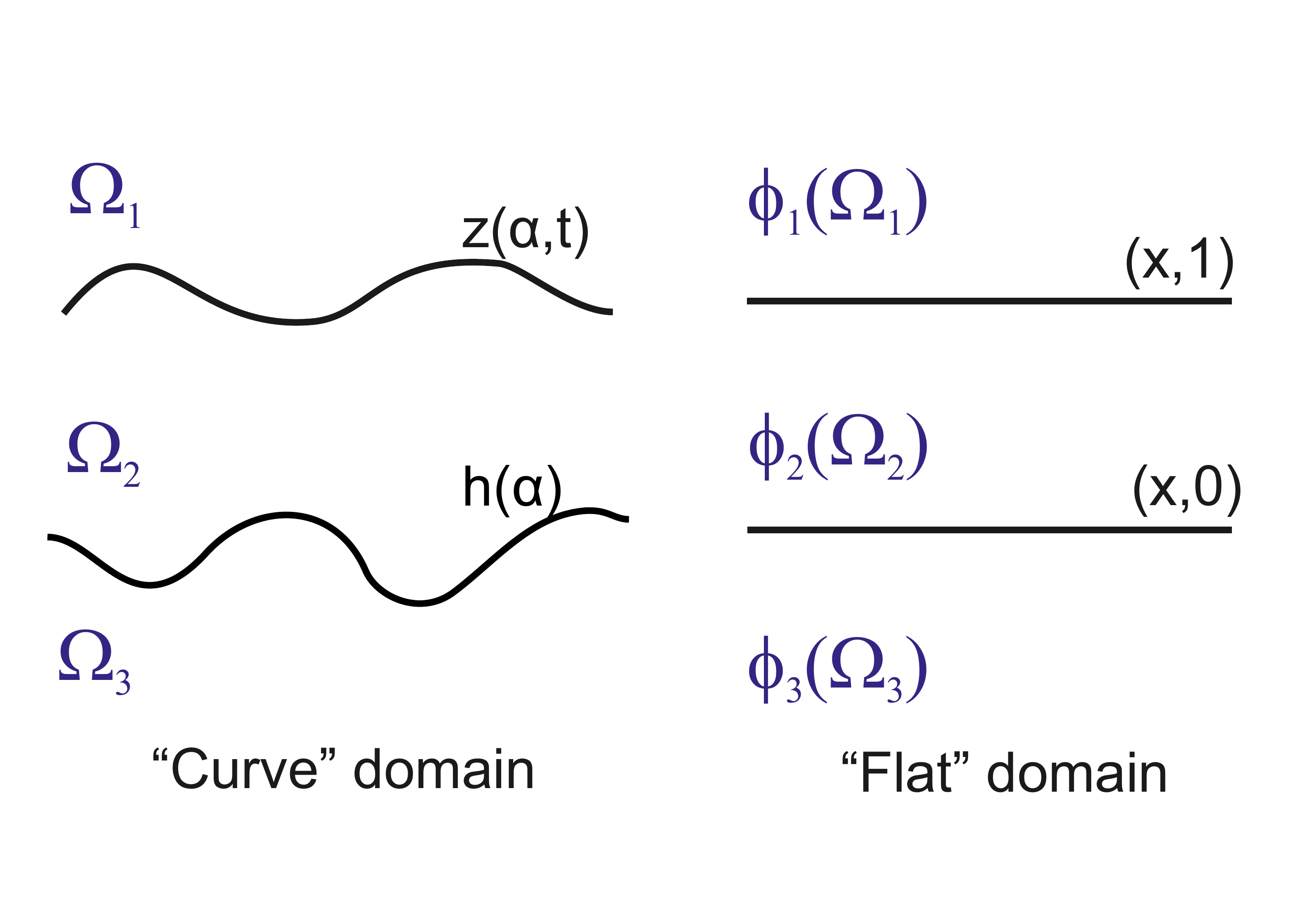}
 \caption{Conformal maps $\phi_{i}$}\label{fig:flat}
\end{figure}

Let $\phi_{i}$ the conformal mapping from the $\Omega_{i}$ domain to the "flat" domain (See the figure \ref{fig:flat}), then the corresponding operator in the "curve" domain are denoted by $\mathcal{H}_{i}$.

For the $L^{2}$-norm of the $\mathcal{H}_{i}$ Operator we can repeat the proofs in \cite{hele} for their Hilbert operator $\mathcal{H}_{1}$.

To do this we only have to look at the formulas:
\begin{align*}
&\mathcal{H}_{1}(m^{+},w)=H_{1}(m^{+}\circ\phi_{1}^{-1},w\circ\phi_{1}^{-1})\circ\phi_{1},\\
&\mathcal{H}_{2}^{h}(m^{+},w)=H_{2}^{h}(m^{+}\circ\phi_{2}^{-1},w\circ\phi_{2}^{-1})\circ\phi_{2},\\
&\mathcal{H}_{2}^{z}(f,m^{-})=H_{2}^{z}(f\circ\phi_{2}^{-1},m^{-}\circ\phi_{2}^{-1})\circ\phi_{2},\\
&\mathcal{H}_{3}(f,m^{-})=H_{3}(f\circ\phi_{3}^{-1},m^{-}\circ\phi_{3}^{-1})\circ\phi_{3}.
\end{align*}

Since our parametric curves $z(\alpha)$ and $h(\alpha)$ are $\mathcal{C}^{2,\delta}$ satisfying the arc-chord conditions $\norm{\F}_{L^{\infty}}<\infty$, $\norm{\G}_{L^{\infty}}<\infty$ and the distance $\norm{\di}_{L^{\infty}}<\infty$. Then we have tangent balls to the boundary contained inside the domains $\Omega_{i}$. Furthermore, we can estimate from below the radius of those balls by $C\nor{z,h}^{-1}$(As in Lemma 4.3 in \cite{hele}).

Following the steps of the proof of Lemma 4.4 in \cite{hele} we can conclude that 
\begin{displaymath}
\norm{\mathcal{H}_{i}}_{L^{2}}\le e^{C\nor{z,h}^{2}}
\end{displaymath}
for all $i=1,2,3$.

\section{Estimates on $\varpi$}\label{estiamplitud}
In this section we show that the norm of amplitude of the vorticity $\varpi=(\varpi_{1},\varpi_{2})$ is bounded in $H^{k}$, for $k\ge 2$.
\begin{lem}
Let $\varpi=(\varpi_{1},\varpi_{2})$ be a function given by
\begin{align}
\label{var1}
&\varpi_{1}(\alpha)=-\gamma_{1}T_{1}(\varpi_{1})(\alpha)-\gamma_{1}T_{2}(\varpi_{2})(\alpha)-N\pa{}z_{2}(\alpha),\\
\label{var2}
&\varpi_{2}(\alpha)=-\gamma_{2}T_{3}(\varpi_{1})(\alpha)-\gamma_{2}T_{4}(\varpi_{2})(\alpha)
\end{align}
where $\gamma_{1}=\frac{\mu_{2}-\mu_{1}}{\mu_{1}+\mu_{2}}$, $\gamma_{2}=\frac{\kappa_{1}-\kappa_{2}}{\kappa_{1}+\kappa_{2}}$ and $N=2\kappa_{1}g\frac{\rho_{2}-\rho_{1}}{\mu_{2}+\mu_{1}}$.

Then
\begin{displaymath}
\norm{\varpi}_{H^{k}}\le\esth{k+1}
\end{displaymath}
for $k\ge 2$.
\end{lem}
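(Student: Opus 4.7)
The plan is to rewrite the system (\ref{var1})--(\ref{var2}) as a single operator equation and then deploy the inversion estimates already established in Section \ref{invertiroperador}. Setting $M=\mathrm{diag}(\gamma_{1},\gamma_{2})$ and $F=(-N\pa{}z_{2},0)^{T}$, the two identities collapse into $(I+M\OT)\varpi=F$. Because $\mu_{1},\mu_{2}>0$ and $\kappa_{1},\kappa_{2}>0$ one has $\abs{\gamma_{1}}<1$ and $\abs{\gamma_{2}}<1$, which is exactly the range where Proposition \ref{invertoperhunmedio} and Lemma \ref{l2conm} apply.

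As a base case I would directly invoke Proposition \ref{invertoperhunmedio}, obtaining
\[
\norm{\varpi}_{H^{1/2}_{0}}\le e^{C\nor{z,h}^{2}}\norm{F}_{H^{1/2}_{0}}\le\esth{2},
\]
where the mean-zero hypothesis is met because the first component of $F$ is $\pa{}$ of a periodic function and the second is identically zero.

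For the $H^{k}$ bound with $k\ge 2$ I would proceed by induction on $k$. Applying $\pa{k}$ to $(I+M\OT)\varpi=F$ yields
\[
(I+M\OT)\pa{k}\varpi=\pa{k}F-M[\pa{k},\OT]\varpi.
\]
The forcing satisfies $\norm{\pa{k}F}_{L^{2}}\le C\norm{z}_{H^{k+1}}\le\esth{k+1}$. The commutator $[\pa{k},\OT]\varpi$, expanded componentwise on each $T_{i}$ by Leibniz, is a sum of singular integrals in which at most $k-1$ derivatives fall on $\varpi$ while at most $k+1$ derivatives fall on $z$ and $h$ inside the kernel. Granting the commutator bound $\norm{[\pa{k},\OT]\varpi}_{L^{2}}\le\esth{k+1}\,(1+\norm{\varpi}_{H^{k-1}})$, the $L^{2}$ inverse estimate from Lemma \ref{l2conm} applied to the above identity gives $\norm{\pa{k}\varpi}_{L^{2}}\le\esth{k+1}$, which combined with the inductive hypothesis $\norm{\varpi}_{H^{k-1}}\le\esth{k}$ closes the loop.

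The main obstacle is precisely that commutator estimate. The entries of $\OT$ are Birkhoff--Rott type singular integrals with kernels built from $z(\alpha)-z(\alpha-\beta)$ and $z(\alpha)-h(\alpha-\beta)$, so applying $\pa{k}$ produces many terms, the most delicate being those in which the full $k+1$ derivatives of $z$ (or $h$) sit under a principal-value integral. These top-order pieces must be handled by the same devices that appear in the proofs of Lemma \ref{tl2h1} and Proposition \ref{estimaT}: symmetrising the kernel around $\beta=0$, using $\pa{}z\cdot\pa{\bot}z=0$ to kill the leading singularity, isolating Hilbert-transform pieces, and integrating by parts to redistribute one derivative between the kernel and $\varpi$. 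Once these cancellations are in place, the remaining integrals reduce to standard singular integrals whose $L^{2}$ norms are bounded through Sobolev embedding in terms of $\norm{\F}_{L^{\infty}}$, $\norm{d(z,h)}_{L^{\infty}}$, and $\norm{z}_{H^{k+1}}$, yielding the claimed $\esth{k+1}$ bound.
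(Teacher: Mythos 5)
Your strategy is the same one the paper uses: rewrite (\ref{var1})--(\ref{var2}) as $(I+M\OT)\varpi=F$, get the low-norm bound from the inverse-operator estimates of Section \ref{invertiroperador}, then differentiate $k$ times and invert again, treating the Leibniz remainder as a commutator. The genuine gap is that the inequality you ``grant'', $\norm{[\pa{k},\OT]\varpi}_{L^{2}}\le\esth{k+1}(1+\norm{\varpi}_{H^{k-1}})$, is precisely where the content of this lemma lies; gesturing at ``the same devices as in Lemma \ref{tl2h1} and Proposition \ref{estimaT}'' does not establish it, since those results only give $L^{2}\to H^{1}$ bounds for $\OT$ and $\OT^{*}$, not bounds for $k$ derivatives falling on the Birkhoff--Rott kernels. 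The paper closes exactly this point by an algebraic reduction you did not notice: the Leibniz remainder $R_{k}^{i}$ is rewritten (up to harmless constants) as $\pa{k}T_{i}(\varpi_{j})-\pa{k-1}T_{i}(\pa{}\varpi_{j})$, so that its norm is controlled by the separately proven smoothing estimate of Lemma \ref{thkhk1}, $\norm{\OT}_{H^{m}\times H^{m}\to H^{m+1}\times H^{m+1}}\le C\nor{z,h}^{2}\norm{z}^{2}_{H^{m+2}}$. Applied with $m=k-1$ this yields exactly your granted commutator bound, with $\norm{z}_{H^{k+1}}$ as required; so your route can be completed, but only by invoking (or reproving) Lemma \ref{thkhk1}, which is the real work and is absent from your proposal.

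Beyond that, your variant differs from the paper only mildly: you apply $\pa{k}$ and invert in $L^{2}$ (via Lemma \ref{l2conm}), whereas the paper applies $\Lambda^{\frac{1}{2}}\pa{k}$ and inverts in $H^{\frac{1}{2}}$ (Proposition \ref{invertoperhunmedio}), bounding the remainder in $H^{1}$; your version needs the remainder only in $L^{2}$, which is slightly cheaper, and it rests on the same identification of the inverse norms for $I+M\OT$ and $I+M\OT^{*}$ that the paper itself asserts. Two bookkeeping corrections: the base case cannot be $\esth{2}$, because the inverse bound already costs $e^{C\nor{z,h}^{2}}$, which contains $\norm{z}^{2}_{H^{3}}$, so the honest statement is $\norm{\varpi}_{H^{\frac{1}{2}}}\le e^{C\nor{z,h}^{2}}$ (harmless, since the lemma only claims $\esth{k+1}$ with $k\ge 2$); and to start your induction at $k=2$ you need $\norm{\varpi}_{H^{1}}$, which the $H^{\frac{1}{2}}$ base case does not give directly --- you must either run your identity once with $k=1$ or interpolate, as the paper implicitly does with its $k=\frac{1}{2}$ step.
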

\begin{proof}
We can write,
\begin{equation}
\label{amplieq}
\varpi=M\OT\varpi-v
\end{equation}
where $M=\left(\begin{matrix}
-\gamma_{1}&0\\
0&-\gamma_{2}
\end{matrix}\right)$, $\OT=\left(\begin{matrix}
T_{1}&T_{2}\\
T_{3}&T_{4}
\end{matrix}\right)$ and $v=\left(\begin{matrix}
N\pa{}z_{2}(\alpha)\\
0
\end{matrix}\right)$.
The formula (\ref{amplieq}) is equivalent to
\begin{displaymath}
\varpi=(I+M\OT)^{-1}v.
\end{displaymath}
It yields
\begin{displaymath}
\norm{\varpi}_{H^{\frac{1}{2}}}\le\norm{(I+M\OT)^{-1}}_{H^{\frac{1}{2}}}\norm{\pa{}z_{2}}_{H^{\frac{1}{2}}}.
\end{displaymath}
Since $\abs{\gamma_{i}}<1$ for all $i$, the proposition \ref{invertoperhunmedio} gives
\begin{displaymath}
\norm{\varpi}_{H^{\frac{1}{2}}}\le e^{C\nor{z,h}^{2}}.
\end{displaymath}
Recall that $\nor{z,h}^{2}=\norm{\F}^{2}_{L^{\infty}}+\norm{d(z,h)}^{2}_{L^{\infty}}+\norm{z}^{2}_{H^{3}}$.
Now, we consider the $H^{k+1}$-norm
\begin{displaymath}
\norm{\varpi}_{H^{k+1}}=\norm{\varpi_{1}}_{H^{k+1}}+\norm{\varpi_{2}}_{H^{k+1}}.
\end{displaymath}
Then, we study each component one by one. 

Taking the $k$ derivative of (\ref{var1}) we get:
\begin{displaymath}
\pa{k}\varpi_{1}(\alpha)=-\lambda_{1}\pa{k}(2\brz{z}\cdot\pa{}z(\alpha))-\lambda_{1}\pa{k}(2\brh{z}\cdot\pa{}z(\alpha))-N\pa{k+1}z_{2}(\alpha).
\end{displaymath}
Using Leibniz's rule we have,
\begin{align*}
&2\pa{k}(\brz{z}\cdot\pa{}z(\alpha))=\sum_{j=0}^{k}\frac{C_{k}}{\pi}\int_{\R}\pa{k-j}(\frac{\Delta z^{\bot}\cdot\pa{}z(\alpha)}{\abs{\Delta z}^{2}})\pa{j}\varpi_{1}(\alpha-\beta)d\beta\\
&=\sum_{j=0}^{k-1}\frac{C_{k}}{\pi}\int_{\R}\pa{k-j}(\frac{\Delta z^{\bot}\cdot\pa{}z(\alpha)}{\abs{\Delta z}^{2}})\pa{j}\varpi_{1}(\alpha-\beta)d\beta+T_{1}(\pa{k}\varpi_{1})(\alpha)
\end{align*}
and
\begin{align*}
&2\pa{k}(\brh{z}\cdot\pa{}z(\alpha))\\
&=\sum_{j=0}^{k-1}\frac{C_{k}}{\pi}\int_{\R}\pa{k-j}(\frac{\Delta zh^{\bot}\cdot\pa{}z(\alpha)}{\abs{\Delta zh}^{2}})\pa{j}\varpi_{2}(\alpha-\beta)d\beta+T_{2}(\pa{k}\varpi_{2})(\alpha).
\end{align*}
Recall that $\Delta z=z(\alpha)-z(\alpha-\beta)$ and $\Delta zh=z(\alpha)-h(\alpha-\beta)$.
Therefore, we obtain
\begin{displaymath}
\pa{k}\varpi_{1}(\alpha)+\lambda_{1}T_{1}(\pa{k}\varpi_{1})(\alpha)+\lambda_{1}T_{2}(\pa{k}\varpi_{2})(\alpha)=R_{k}^{1}(\varpi_{1})+R_{k}^{2}(\varpi_{2})-N\pa{k+1}z_{2}(\alpha)
\end{displaymath}
where
\begin{align*}
&R^{1}_{k}(\varpi_{1})=\sum_{j=0}^{k-1}\frac{C_{k}}{\pi}\int_{\R}\pa{k-j}(\frac{\Delta z^{\bot}\cdot\pa{}z(\alpha)}{\abs{\Delta z}^{2}})\pa{j}\varpi_{1}(\alpha-\beta)d\beta,\\
&R^{2}_{k}(\varpi_{2})=\sum_{j=0}^{k-1}\frac{C_{k}}{\pi}\int_{\R}\pa{k-j}(\frac{\Delta zh^{\bot}\cdot\pa{}z(\alpha)}{\abs{\Delta zh}^{2}})\pa{j}\varpi_{2}(\alpha-\beta)d\beta.
\end{align*}
If we observe that
\begin{displaymath}
\pa{}T_{1}(\varpi_{1})(\alpha)=\frac{1}{\pi}\int_{\R}\pa{}(\frac{\Delta z^{\bot}\cdot\pa{}z(\alpha)}{\abs{\Delta z}^{2}})\varpi_{1}(\alpha-\beta)d\beta+T_{1}(\pa{}\varpi_{1})(\alpha)
\end{displaymath}
we get
\begin{align*}
&R_{k}^{1}(\varpi_{1})=\pa{k-1}(\frac{1}{\pi}\int_{\R}\pa{}(\frac{\Delta z^{\bot}\cdot\pa{}z(\alpha)}{\abs{\Delta z}^{2}})\varpi_{1}(\alpha-\beta)d\beta)\\
&=\pa{k-1}(\pa{}T_{1}(\varpi_{1})(\alpha)-T_{1}(\pa{}\varpi_{1})(\alpha))=\pa{k}T_{1}(\varpi_{1})(\alpha)-\pa{k-1}T_{1}(\pa{}\varpi_{1})(\alpha)
\end{align*}
and
\begin{displaymath}
R_{k}^{2}(\varpi_{2})=\pa{k}T_{2}(\varpi_{2})(\alpha)-\pa{k-1}T_{2}(\pa{}\varpi_{2})(\alpha).
\end{displaymath}
Taking the $k$-derivatives in (\ref{var2}), we get
\begin{displaymath}
\pa{k}\varpi_{2}(\alpha)+\lambda_{2}T_{3}(\pa{k}\varpi_{1})(\alpha)+\lambda_{2}T_{4}(\pa{k}\varpi_{2})(\alpha)=R_{k}^{3}(\varpi_{1})+R^{4}_{k}(\varpi{2})
\end{displaymath}
with
\begin{align*}
&R^{3}_{k}(\varpi_{1})=\pa{k}T_{3}(\varpi_{1})(\alpha)-\pa{k-1}T_{3}(\pa{}\varpi_{1})(\alpha),\\
&R^{4}_{k}(\varpi_{2})=\pa{k}T_{4}(\varpi_{2})(\alpha)-\pa{k-1}T_{4}(\pa{}\varpi_{2})(\alpha).
\end{align*}
Next let us consider
\begin{align}
\label{s1}
&\Lambda^{\frac{1}{2}}\pa{k}\varpi_{1}(\alpha)+\lambda_{1}T_{1}(\Lambda^{\frac{1}{2}}\pa{k}\varpi_{1})(\alpha)+\lambda_{1}T_{2}(\Lambda^{\frac{1}{2}}\pa{k}\varpi_{2})(\alpha)\\\nonumber
&=\lambda_{1}T_{1}(\Lambda^{\frac{1}{2}}\pa{k}\varpi_{1})(\alpha)+\lambda_{1}T_{2}(\Lambda^{\frac{1}{2}}\pa{k}\varpi_{2})(\alpha)-\lambda_{1}\Lambda^{\frac{1}{2}}T_{1}(\pa{k}\varpi_{1})(\alpha)-\lambda_{1}\Lambda^{\frac{1}{2}}T_{2}(\pa{k}\varpi_{2})(\alpha)\\
&+\Lambda^{\frac{1}{2}}R^{1}_{k}(\varpi_{1})(\alpha)+\Lambda^{\frac{1}{2}}R^{2}_{k}(\varpi_{2})(\alpha)-N\Lambda^{\frac{1}{2}}\pa{k+1}z_{2}(\alpha)\nonumber
\end{align}
and
\begin{align}
\label{s2}
&\Lambda^{\frac{1}{2}}\pa{k}\varpi_{2}(\alpha)+\lambda_{2}T_{3}(\Lambda^{\frac{1}{2}}\pa{k}\varpi_{1})(\alpha)+\lambda_{2}T_{4}(\Lambda^{\frac{1}{2}}\pa{k}\varpi_{2})(\alpha)\\\nonumber
&=\lambda_{2}T_{3}(\Lambda^{\frac{1}{2}}\pa{k}\varpi_{1})(\alpha)+\lambda_{2}T_{4}(\Lambda^{\frac{1}{2}}\pa{k}\varpi_{2})(\alpha)-\lambda_{2}\Lambda^{\frac{1}{2}}T_{3}(\pa{k}\varpi_{1})(\alpha)-\lambda_{2}\Lambda^{\frac{1}{2}}T_{4}(\pa{k}\varpi_{2})(\alpha)\\
&+\Lambda^{\frac{1}{2}}R^{3}_{k}(\varpi_{1})(\alpha)+\Lambda^{\frac{1}{2}}R^{4}_{k}(\varpi_{2})(\alpha).\nonumber
\end{align}
Then, we write
\begin{displaymath}
\left(\begin{matrix}
\Lambda^{\frac{1}{2}}\pa{k}\varpi_{1}\\
\Lambda^{\frac{1}{2}}\pa{k}\varpi_{2}
\end{matrix}\right)+\left(\begin{matrix}
\gamma_{1}&0\\
0&\gamma_{2}
\end{matrix}\right)\left(\begin{matrix}
T_{1}&T_{2}\\
T_{3}&T_{4}
\end{matrix}\right)\left(\begin{matrix}
\Lambda^{\frac{1}{2}}\pa{k}\varpi_{1}\\
\Lambda^{\frac{1}{2}}\pa{k}\varpi_{2}
\end{matrix}\right)=\left(\begin{matrix}
S_{1}\\
S_{2}
\end{matrix}\right)
\end{displaymath}
where $S_{1}$ is the right hand side of (\ref{s1}) and $S_{2}$ the right hand side of (\ref{s2}).
Using the estimate for the inverse $(I+M\OT)^{-1}$ in the space $H^{\frac{1}{2}}$  we get
\begin{displaymath}
\norm{\varpi}_{H^{k+1}}\le\norm{\Lambda^{\frac{1}{2}}\pa{k}\varpi}_{H^{\frac{1}{2}}}\le C\norm{(I+M\OT)^{-1}}_{H^{\frac{1}{2}}}\norm{S}_{H^{\frac{1}{2}}}\le e^{C\nor{z,h}^{2}}\norm{S}_{H^{\frac{1}{2}}}.
\end{displaymath}
We have that
\begin{displaymath}
S=\left(\begin{matrix}
S_{1}\\
S_{2}
\end{matrix}\right)=M\OT(\Lambda^{\frac{1}{2}}\pa{k}\varpi)-M\Lambda^{\frac{1}{2}}(\OT(\pa{k}\varpi))+\Lambda^{\frac{1}{2}}(\R_{k}(\varpi))-\left(\begin{matrix}
N\Lambda^{\frac{1}{2}}\pa{k+1}z_{2}(\alpha)\\
0
\end{matrix}\right)
\end{displaymath}
where $\R_{k}(\varpi)=\left(\begin{matrix}
R^{1}_{k}&R^{2}_{k}\\
R^{3}_{k}&R^{4}_{k}
\end{matrix}\right)\left(\begin{matrix}
\varpi_{1}\\
\varpi_{2}
\end{matrix}\right)$.
Thus,
\begin{displaymath}
\norm{S}_{H^{\frac{1}{2}}}\le C\norm{\OT(\Lambda^{\frac{1}{2}}\pa{k}\varpi)}_{H^{\frac{1}{2}}}+\norm{\OT(\pa{k}\varpi))}_{H^{1}}+\norm{\R_{k}(\varpi)}_{H^{1}}+\norm{z}_{H^{k+2}}.
\end{displaymath}
Using the lemma \ref{tl2h1},
\begin{displaymath}
\norm{\OT(\pa{k}\varpi)}_{H^{1}}\le C\norm{\F}_{L^{\infty}}^{2}\norm{\G}^{2}_{L^{\infty}}\norm{d(z,h)}^{2}_{L^{\infty}}\norm{z}_{\mathcal{C}^{2}}^{4}\norm{h}^{4}_{\mathcal{C}^{2}}\norm{\varpi}_{H^{k}}.
\end{displaymath}
Since $R^{i}_{k}(\varpi_{j})=\pa{k}T_{i}(\varpi_{j})(\alpha)-\pa{k-1}T_{i}(\pa{}\varpi_{j})(\alpha)$ for $i,j=1,2,3,4$ we can write
\begin{displaymath}
\R_{k}(\varpi)=\pa{k}\OT(\varpi)-\pa{k-1}\OT(\pa{}\varpi).
\end{displaymath}
Then using the lemma \ref{thkhk1} (proved below),
\begin{align*}
&\norm{\R_{k}(\varpi)}_{H^{1}}\le\norm{\pa{k}\OT(\varpi)}_{H^{1}}+\norm{\pa{k-1}\OT(\pa{}\varpi)}_{H^{1}}\le\norm{\OT(\varpi)}_{H^{k+1}}+\norm{\OT(\pa{}\varpi)}_{H^{k}}\\
&\le C\nor{z,h}^{2}(\norm{z}_{H^{k+2}}^{2}+\norm{h}_{H^{k+2}}^{2})\norm{\varpi}_{H^{k}}+C\nor{z,h}^{2}(\norm{z}_{H^{k+1}}^{2}+\norm{h}_{H^{k+1}}^{2})\norm{\pa{}\varpi}_{H^{k-1}}.
\end{align*}
Finally, using lemma \ref{tl2h1}
\begin{align*}
&\norm{\OT(\Lambda^{\frac{1}{2}}\pa{k}\varpi)}_{H^{\frac{1}{2}}}\le\norm{\OT(\Lambda^{\frac{1}{2}}\pa{k}\varpi)}_{H^{1}}\\
&\le C\norm{\F}^{2}_{L^{\infty}}\norm{\G}^{2}_{L^{\infty}}\norm{d(z,h)}^{2}_{L^{\infty}}\norm{z}^{4}_{\mathcal{C}^{2}}\norm{h}^{4}_{\mathcal{C}^{2}}\norm{\Lambda^{\frac{1}{2}}\pa{k}\varpi}_{L^{2}}.
\end{align*}
Since 
\begin{displaymath}
\pa{k}\varpi+M\OT\pa{k}\varpi=\R_{k}(\varpi)-\left(\begin{matrix}
N\pa{k+1}z_{2}\\
0
\end{matrix}\right),
\end{displaymath}
then
\begin{displaymath}
\pa{k}\varpi=(I+M\OT)^{-1}(\R_{k}(\varpi)-\left(\begin{matrix}
N\pa{k+1}z_{2}\\
0
\end{matrix}\right)).
\end{displaymath}
Therefore,
\begin{align*}
&\norm{\Lambda^{\frac{1}{2}}\pa{k}\varpi}_{L^{2}}=\norm{\pa{k}\varpi}_{H^{\frac{1}{2}}}\le\norm{(I+M\OT)^{-1}}_{H^{\frac{1}{2}}}(\norm{\R_{k}(\varpi)}_{H^{\frac{1}{2}}}+\norm{z}_{H^{k+\frac{3}{2}}})\\
&\le e^{C\nor{z,h}^{2}}(C\nor{z,h}^{2}(\norm{z}_{H^{k+2}}^{2}+\norm{h}^{2}_{H^{k+2}})\norm{\varpi}_{H^{k}}+\norm{z}_{H^{k+\frac{3}{2}}}).
\end{align*}
In conclusion,
\begin{displaymath}
\norm{\varpi}_{H^{k+1}}\le e^{C\nor{z,h}^{2}}(\norm{\F}^{2}_{L^{\infty}}\norm{d(z,h)}^{2}_{L^{\infty}}\norm{z}_{H^{k+2}}^{2}\norm{\varpi}_{H^{k}}+\norm{z}_{H^{k+2}}).
\end{displaymath}
For $k=\frac{1}{2}$, since $\norm{\varpi}_{H^{\frac{1}{2}}}\le e^{C\nor{z,h}^{2}}$ then
\begin{displaymath}
\norm{\varpi}_{H^{\frac{3}{2}}}\le \exp{C(\norm{\F}^{2}_{L^{\infty}}+\norm{d(z,h)}^{2}_{L^{\infty}}+\norm{z}^{2}_{H^{\frac{5}{2}}})}.
\end{displaymath}
Therefore using induction on $k\ge 2$ allows us to finish the proof.
\end{proof}
%%%%%%%%%%%%%%%%%%%%%%%%%%%%%%%%%%%%%%%%%%%%%%%%%%%%%%%%%%%%%%%
\begin{lem}
\label{thkhk1}
The operator $\OT$ maps Sobolev space $H^{k}\times H^{k}$, $k\ge 1$, into $H^{k+1}\times H^{k+1}$ as long as $z,h\in H^{k+2}$ and satisfies the estimate
\begin{displaymath}
\norm{\OT}_{H^{k}\times H^{k}\to H^{k+1}\times H^{k+1}}\le C\nor{z,h}^{2}\norm{z}^{2}_{H^{k+2}}
\end{displaymath}
\end{lem}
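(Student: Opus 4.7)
The plan is to bound $\norm{\pa{k+1}T_i(u)}_{L^2}$ for each $i\in\{1,2,3,4\}$ separately, so that summing the four contributions yields the claimed norm bound on $\OT:H^k\times H^k\to H^{k+1}\times H^{k+1}$. For $T_1$ and $T_4$, which coincide with the Birkhoff--Rott self-interaction operator on the single curves $z$ and $h$ respectively, I would invoke the higher-derivative analogue of Lemma~3.1 of \cite{hele} (the argument is the same one used to bootstrap from Lemma~\ref{tl2h1}, namely writing $\pa{k+1}T_1$ as a commutator plus a leading term that is the Hilbert transform of $\pa{k+1}u$ modulo smoother pieces). Since $h$ is fixed in time, the factor $\norm{\G}_{L^\infty}^2\norm{h}_{H^{k+2}}^2$ coming from $T_4$ is absorbed into the universal constant $C$.

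For the cross-interaction operators $T_2$ and $T_3$ the analysis is actually simpler than for $T_1,T_4$: the hypothesis $\norm{\di}_{L^\infty}<\infty$ guarantees that the kernel
\begin{displaymath}
K(\alpha,\beta)=\frac{(z(\alpha)-h(\alpha-\beta))^{\bot}\cdot\pa{}z(\alpha)}{\abs{z(\alpha)-h(\alpha-\beta)}^{2}}
\end{displaymath}
never becomes singular, so the principal values disappear and there is no singular-integral subtraction to perform. I would apply Leibniz's rule to
\begin{displaymath}
\pa{k+1}T_{2}(v)(\alpha)=\frac{1}{\pi}\sum_{j=0}^{k+1}\binom{k+1}{j}\int_{\R}\pa{k+1-j}K(\alpha,\beta)\,\pa{j}v(\alpha-\beta)\,d\beta,
\end{displaymath}
integrate by parts in the one term where all derivatives fall on $v$ (as in the treatment of $I_2$ inside Lemma~\ref{tl2h1}), and then estimate each remaining term in $L^2$ using that $\pa{k+1-j}K$ is a finite sum of products of derivatives of $z$ and $h$ (at most order $k+2$) divided by $\abs{z(\alpha)-h(\alpha-\beta)}^{2\ell}$ for some $\ell$ bounded by the Fa\`a di Bruno degree.

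The top-order pieces -- those in which all $k+1$ derivatives collapse onto a single factor of $\pa{}z$ or $\pa{}h$ -- are controlled by pulling the bounded quantities $\norm{\di}_{L^\infty}$, $\norm{z}_{\mathcal{C}^1}$ and $\norm{h}_{\mathcal{C}^1}$ out in $L^\infty$, keeping the top-order derivative in $L^2$, and applying Young's/Minkowski's inequality on the now-regular kernel; this gives contributions of the form $C\norm{\di}^2_{L^\infty}\norm{z}_{H^{k+2}}^2\norm{v}_{H^k}$. Intermediate Leibniz terms (derivatives split between several factors) are absorbed by Sobolev interpolation and the embedding $H^{k}\hookrightarrow\mathcal{C}^{k-1}$, valid for $k\ge 1$. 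The estimate for $T_3$ follows by exchanging the roles of $z$ and $h$, after which its $h$-dependent constants are again absorbed into $C$.

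The main obstacle is not conceptual but combinatorial: one has to verify that \emph{every} Leibniz term arising from differentiating the numerator $(\Delta zh)^{\bot}\cdot\pa{}z(\alpha)$ together with the multiple applications of the chain rule to $\abs{\Delta zh}^{-2}$ (each of which spawns a sum over partitions of $k+1$) can be bounded by $C\nor{z,h}^2\norm{z}_{H^{k+2}}^2\norm{(u,v)}_{H^k}$. For $T_1,T_4$ the additional subtlety is the singularity at $\beta=0$, but this is precisely what Lemma~\ref{tl2h1} and its higher-order extension in \cite{hele} are designed to handle, via the identity $\pa{}z(\alpha)\cdot\pa{\bot}z(\alpha)=0$ which eliminates the most dangerous principal-value terms exactly as in the treatment of $I_2^{21}$ and $I_2^{22}$ inside the proof of Proposition~\ref{estimaT}.
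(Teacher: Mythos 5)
Your proposal follows essentially the same route as the paper: the self-interaction operators $T_{1},T_{4}$ are handled by citing the higher-order lemma of \cite{hele} (Lemma 5.2 there), while for the cross terms $T_{2},T_{3}$ you apply Leibniz's rule, integrate by parts in the single term carrying $\pa{k+1}$ of the density, and exploit the non-singularity of the kernel via $\norm{d(z,h)}_{L^{\infty}}<\infty$ together with control of the top-order factor $\pa{k+2}z$, exactly as in the paper's treatment of $T_{2}(\pa{k+1}\varpi_{2})$ and $J_{1}$; the $h$-dependent constants are absorbed since $h$ is fixed in time. The only difference is cosmetic (you bound the top-order term by pulling out $L^{\infty}$ factors and Young/Minkowski where the paper uses a Cauchy inequality), so the argument is correct and matches the paper's proof.
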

\begin{proof}
For the lemma $5.2$ in \cite{hele} we have
\begin{displaymath}
\norm{T_{1}(\varpi_{1})}_{H^{k+1}}\le C(\norm{\F}^{2}_{L^{\infty}}+\norm{z}^{2}_{H^{3}})\norm{z}^{2}_{H^{k+2}}\norm{\varpi_{1}}_{H^{k}}
\end{displaymath}
and changing $z$ for $h$ then
\begin{displaymath}
\norm{T_{4}(\varpi_{2})}_{H^{k+1}}\le C(\norm{\G}^{2}_{L^{\infty}}+\norm{h}^{2}_{H^{3}})\norm{h}^{2}_{H^{k+2}}\norm{\varpi_{2}}_{H^{k}}.
\end{displaymath}
Let us see what happens with $T_{2}(\varpi_{2})$. Taking the $k+1$-derivatives,
\begin{align*}
&\pa{k+1}T_{2}(\varpi_{2})(\alpha)=\sum_{j=0}^{k+1}\frac{C_{k}}{\pi}\int_{\R}\pa{k+1-j}(\frac{\Delta zh^{\bot}\cdot\pa{}z(\alpha)}{\abs{\Delta zh}^{2}})\pa{j}\varpi_{2}(\alpha-\beta)d\beta\\
&=T_{2}(\pa{k+1}\varpi_{2})(\alpha)+\frac{1}{\pi}\int_{\R}\pa{k+1-j}(\frac{\Delta zh^{\bot}\cdot\pa{}z(\alpha)}{\abs{\Delta zh}^{2}})\varpi_{2}(\alpha-\beta)d\beta\\
&+\sum_{j=1}^{k}\frac{C_{k}}{\pi}\int_{\R}\pa{k+1-j}(\frac{\Delta zh^{\bot}\cdot\pa{}z(\alpha)}{\abs{\Delta zh}^{2}})\pa{j}\varpi_{2}(\alpha-\beta)d\beta\\
&=T_{2}(\pa{k+1}\varpi_{2})(\alpha)+\frac{1}{\pi}\int_{\R}\frac{\Delta zh^{\bot}\cdot\pa{k+2}z(\alpha)}{\abs{\Delta zh}^{2}}\varpi_{2}(\alpha-\beta)d\beta\\
&+ \text{``other terms''}\\
&=T_{2}(\pa{k+1}\varpi_{2})(\alpha)+J_{1}+\text{``other terms''}
\end{align*}
The estimate for ``other terms'' is straighforward. For $T_{2}(\pa{k+1}\varpi_{2})(\alpha)$ we integrate by parts:
\begin{align*}
&T_{2}(\pa{k+1}\varpi_{2})(\alpha)=\frac{-1}{\pi}\int_{\R}\frac{\Delta zh^{\bot}\cdot\pa{}z(\alpha)}{\abs{\Delta zh}^{2}}\partial_{\beta}(\pa{k}\varpi_{2}(\alpha-\beta))d\beta\\
&=\frac{1}{\pi}\int_{\R}\partial_{\beta}(\frac{\Delta zh^{\bot}\cdot\pa{}z(\alpha)}{\abs{\Delta zh}^{2}})\pa{k}\varpi_{2}(\alpha-\beta)d\beta\\
&=-\frac{1}{\pi}\int_{\R}\frac{\pa{\bot}h(\alpha-\beta)\cdot\pa{}z(\alpha)}{\abs{\Delta zh}^{2}}\pa{k}\varpi_{2}(\alpha-\beta)d\beta\\
&+\frac{2}{\pi}\int_{\R}\frac{\Delta zh^{\bot}\cdot\pa{}z(\alpha)\Delta zh\cdot\pa{}h(\alpha-\beta)}{\abs{\Delta zh}^{4}}\pa{k}\varpi_{2}(\alpha-\beta)d\beta\equiv I_{1}+I_{2}.
\end{align*}
It is easy estimate $I_{1}$
\begin{displaymath}
\abs{I_{1}}\le C\norm{\pa{}z}_{L^{\infty}}\norm{d(z,h)}_{L^{\infty}}\norm{h}_{H^{1}}\norm{\varpi_{2}}_{H^{k}}.
\end{displaymath}
For $I_{2}$, using the Cauchy inequality
\begin{align*}
&\abs{I_{2}}\le \frac{1}{2\pi}\int_{\R}\frac{(\abs{\Delta zh}^{2}+\abs{\pa{}z(\alpha)}^{2})(\abs{\Delta zh}^{2}+\abs{\pa{}h(\alpha-\beta)}^{2})}{\abs{\Delta zh}^{4}}\pa{k}\varpi_{2}(\alpha-\beta)d\beta\\
&\le C\norm{\varpi_{2}}_{H^{k}}+C\norm{d(z,h)}_{L^{\infty}}\norm{z}_{\mathcal{C}^{1}}^{2}\norm{\varpi_{2}}_{H^{k}}+C\norm{d(z,h)}_{L^{\infty}}\norm{h}_{\mathcal{C}^{1}}^{2}\norm{\varpi_{2}}_{H^{k}}+C\norm{d(z,h)}^{2}_{L^{\infty}}\norm{z}_{\mathcal{C}^{1}}^{2}\norm{h}_{\mathcal{C}^{1}}^{2}\norm{\varpi_{2}}_{H^{k}}.
\end{align*}
If we use the same procedure to estimate $J_{1}$,
\begin{align*}
J_{1}\le\frac{1}{2\pi}\int_{\R}\frac{\abs{\Delta zh}^{2}+\abs{\pa{k+2}z(\alpha)}^{2}}{\abs{\Delta zh}^{2}}\varpi_{2}(\alpha-\beta)d\beta\le C\norm{\varpi_{2}}_{L^{2}}+C\norm{d(z,h)}_{L^{\infty}}\norm{z}_{H^{k+2}}^{2}\norm{\varpi_{2}}_{L^{2}}.
\end{align*}

Then,
\begin{align*}
\norm{T_{2}(\varpi_{2})}_{H^{k+1}}\le C\nor{z,h}^{2}\norm{z}^{2}_{H^{k+2}}\norm{\varpi_{2}}_{H^{k}}.
\end{align*}
Since $T_{3}(\varpi_{1})$ is $T_{2}(\varpi_{2})$ changing $z$ for $h$ and viceverse, the estimations will be
\begin{align*}
\norm{T_{3}(\varpi_{1})}_{H^{k+1}}\le C\nor{z,h}^{2}\norm{h}^{2}_{H^{k+2}}\norm{\varpi_{1}}_{H^{k}}.
\end{align*}
Therefore,
\begin{displaymath}
\norm{\OT\varpi}_{H^{k+1}}\le C\nor{z,h}^{2}(\norm{z}_{H^{k+2}}^{2}+\norm{h}_{H^{k+2}}^{2})\norm{\varpi}_{H^{k}}.
\end{displaymath}
Since $h$ is fixed on time, we get the desired estimate.
\end{proof}

%%%%%%%%%%%%%%%%%%%%%%%%%%%%%%%%%%%%%%%%%%%%%%%%%%%%%%%
\section{Estimates on $\brz{z}+\brh{z}+\brz{h}+\brh{h}$}\label{estibr}
This section is devoted to show that the Birkhoff-Rott integral is as regular as $\pa{}z$.
\begin{lem}
The following estimate holds
\begin{align}
\label{brzh}
&\norm{\brz{z}}_{H^{k}}+\norm{\brh{z}}_{H^{k}}+\norm{\brz{h}}_{H^{k}}+\norm{\brh{h}}_{H^{k}}\\\nonumber
&\le\esth{k+1}
\end{align}
for $k\ge 2$.
\end{lem}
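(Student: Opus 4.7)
The plan is to reduce all four Birkhoff--Rott integrals to the estimate $\norm{\varpi}_{H^k}\le\esth{k+1}$ from the previous lemma, using the fact that $z\in H^{k+1}$ and $h\in H^{k+1}$ are both controlled by the same exponential quantity (the latter being constant in time).

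The four integrals split into two natural groups. The self-type integrals $\brz{z}$ and $\brh{h}$ are singular Birkhoff--Rott integrals, each defined on a single smooth curve, and their $H^k$ estimates are obtained exactly by the arguments developed for the operator $T$ in \cite{hele} (the same ones already invoked in Lemma \ref{thkhk1}): one obtains
\[
\norm{\brz{z}}_{H^k}\le C\nor{z,h}^{2}\norm{z}_{H^{k+1}}\norm{\varpi_{1}}_{H^k},
\]
and analogously for $\brh{h}$ with $h,\varpi_{2},\G$ in place of $z,\varpi_{1},\F$. The cross-type integrals $\brh{z}$ and $\brz{h}$ have non-singular kernels thanks to $\norm{\di}_{L^\infty}<\infty$. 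For these I take $k$ derivatives in $\alpha$, expand by Leibniz, and treat each resulting piece as in the proof of Lemma \ref{thkhk1}: when all $k$ derivatives fall on the kernel, producing at worst $\pa{k+1}z$ or $\pa{k+1}h$, the Cauchy inequality $\abs{uv}\le\tfrac12\abs{u}^{2}+\tfrac12\abs{v}^{2}$ converts the integral into a convolution of a bounded kernel with $\varpi_{i}$; when $k$ derivatives fall on $\varpi_{i}$, integration by parts in $\beta$ transfers one derivative onto the smooth kernel at no boundary cost. The intermediate cross terms are of lower order and are controlled by Sobolev embedding together with the $L^\infty$ bounds on $\F$, $\G$, and $\di$.

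The main obstacle is the singular self-type integrals at top order: differentiating $\brz{z}$ up to order $k$ produces a term in which $\pa{k+1}z$ appears under a Hilbert-type singular integral, and no naive pointwise bound of the kernel is available. As in Section \ref{estiminversesect}, the remedy is the splitting
\[
\frac{(z(\alpha)-z(\alpha-\beta))^{\bot}}{\abs{z(\alpha)-z(\alpha-\beta)}^{2}}=\left(\frac{(z(\alpha)-z(\alpha-\beta))^{\bot}}{\abs{z(\alpha)-z(\alpha-\beta)}^{2}}-\frac{\pa{\bot}z(\alpha)}{\beta\abs{\pa{}z(\alpha)}^{2}}\right)+\frac{\pa{\bot}z(\alpha)}{\beta\abs{\pa{}z(\alpha)}^{2}},
\]
in which the bracketed difference is controlled in $L^{2}$ by $\norm{\F}_{L^\infty}$ together with the regularity of $z$ (via mean-value expansions analogous to those used for $I_{2}^{21}$ in Proposition \ref{estimaT}), while the remaining term is a Hilbert transform times a smooth factor, hence bounded on $L^{2}$.

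Combining the four estimates with $\norm{\varpi}_{H^k}\le\esth{k+1}$, absorbing the fixed $h$-norms into the universal constant, and using the exponential form to swallow the polynomial factors in $\nor{z,h}$, one obtains the claimed bound $\esth{k+1}$ for the full sum of the four Birkhoff--Rott norms.
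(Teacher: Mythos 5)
Your proposal is correct and follows essentially the same route as the paper: the self-interaction terms $\brz{z}$ and $\brh{h}$ are handled by the singular-integral machinery of \cite{hele} (the paper simply cites its Lemma 6.1), while the cross terms $\brh{z}$ and $\brz{h}$ are treated by Leibniz expansion with the non-singular kernel controlled through $\norm{d(z,h)}_{L^{\infty}}$, everything being reduced to the amplitude bound $\norm{\varpi}_{H^{k}}\le\esth{k+1}$ and the time-independence of the $h$-norms. The only deviations (integrating by parts in $\beta$ where a direct bound suffices, and counting a $\pa{k+1}$ where only $\pa{k}$ arises before the mean-value expansion) are harmless.
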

\begin{proof}
The lemma $6.1$ on \cite{hele} gives us,
\begin{displaymath}
\norm{\brz{z}}_{H^{k}}\le\le\exp{C(\norm{\F}_{L^{\infty}}^{2}+\norm{z}^{2}_{H^{k+1}})}
\end{displaymath}
and
\begin{displaymath}
\norm{\brh{h}}_{H^{k}}\le\exp{C(\norm{\G}_{L^{\infty}}^{2}+\norm{h}^{2}_{H^{k+1}})}.
\end{displaymath}
Using that $\norm{h}^{2}_{H^{k+1}}$ and $\norm{\G}_{L^{\infty}}^{2}$ are not dependent of time,
\begin{align*}
&\norm{\brz{z}}_{H^{k}}+\norm{\brh{h}}_{H^{k}}\\
&\le\esth{k+1}.
\end{align*}
Let us see what happens with $\brz{h}$ and $\brh{z}$.
It is enough study one of then. For example let study $\brh{z}$.
For $k=2$,
\begin{align*}
&\norm{\brh{z}}_{L^{2}}\le C\norm{d(z,h)}_{L^{\infty}}(\norm{z}_{L^{2}}+\norm{h}_{L^{2}})\norm{\varpi_{2}}_{L^{2}}\\
&\le\esth{1}.
\end{align*} 
If we take two derivatives, we get $\brh{z}=B_{1}+B_{2}+B_{3}+\text{``other terms''}$ where
\begin{align*}
&B_{1}=\frac{1}{2\pi}\int_{\R}\frac{(\Delta zh)^{\bot}}{\abs{\Delta zh}^{2}}\pa{2}\varpi_{2}(\alpha-\beta)d\beta,\\
&B_{2}=\frac{1}{2\pi}\int_{\R}\frac{(\pa{2}z(\alpha)-\pa{2}h(\alpha-\beta))^{\bot}}{\abs{\Delta zh}^{2}}\varpi_{2}(\alpha-\beta)d\beta,\\
&B_{3}=-\frac{1}{\pi}\int_{\R}\frac{(\Delta zh)^{\bot}(\Delta zh\cdot(\pa{2}z(\alpha)-\pa{2}h(\alpha-\beta)))}{\abs{\Delta zh}^{4}}\varpi_{2}(\alpha-\beta)d\beta.
\end{align*}
Using the estimations in $\varpi$ and the distance of $z$ and $h$,
\begin{align*}
&\norm{B_{1}}_{L^{2}}\le C\norm{d(z,h)}_{L^{\infty}}^{\frac{1}{2}}\norm{\varpi_{2}}_{H^{2}}\\
&\le\esth{3}.
\end{align*}
For $B_{2}$,
\begin{align*}
&\norm{B_{2}}_{L^{2}}\le C\norm{d(z,h)}_{L^{\infty}}(\norm{z}_{H^{2}}+\norm{h}_{H^{2}})\norm{\varpi_{2}}_{L^{2}}\\
&\le\esth{2}.
\end{align*}
And $B_{3}$ will be the same,
\begin{align*}
&\norm{B_{3}}_{L^{2}}\le C\norm{d(z,h)}_{L^{\infty}}(\norm{z}_{H^{2}}+\norm{h}_{H^{2}})\norm{\varpi_{2}}_{L^{2}}\\
&\le\esth{2}.
\end{align*}
These estimations allow us to get the desire result.
\end{proof}

\section{A priori estimates on $z(\alpha,t)$}\label{estimacionesz}
In this section we want to give the following a priori estimates
\begin{align*}
&\frac{d}{dt}\norm{z}_{H^{k}}^{2}\le\esth{k}\\
&-\frac{\kappa^{1}}{2\pi(\mu^{1})+\mu^{2}}\int_{\T}\frac{\sigma(\alpha)}{A(t)}\pa{k}z(\alpha)\cdot\Lambda(\pa{k}z)(\alpha)d\alpha
\end{align*} 
for $k\ge 3$.

To do that, we split the computations in four subsections and we will do it for $k=3$. The case $k>3$ is left to the reader.

\subsection{Estimates for the $L^{2}$ norm of the curve}\label{estimal2}
We have
\begin{displaymath}
z_{t}(\alpha)=\brz{z}+\brh{z}+c(\alpha)\pa{}z(\alpha)
\end{displaymath}
where
\begin{align*}
c(\alpha)=&\frac{\alpha+\pi}{2\pi A(t)}\int_{\T}\pa{}z(\beta)\cdot(\pa{}\brz{z}+\pa{}\brh{z})d\beta\\
&-\int_{\pi}^{\alpha}\frac{\pa{}z(\beta)}{A(t)}\cdot(\pa{}\brz{z}+\pa{}\brh{z})d\beta.
\end{align*}
Recall that $A(t)=\abs{\pa{}z(\alpha)}^{2}$.
Then,
\begin{align*}
&\frac{1}{2}\frac{d}{dt}\int_{\T}\abs{z(\alpha)}^{2}d\alpha=\int_{\T}z(\alpha)\cdot z_{t}(\alpha)d\alpha=\int_{\T}z(\alpha)\cdot\brz{z}d\alpha\\
&+\int_{\T}z(\alpha)\cdot\brh{z}d\alpha+\int_{\T}c(\alpha)z(\alpha)\cdot\pa{}z(\alpha)d\alpha\equiv I_{1}+I_{2}+I_{3}.
\end{align*}
Taking $I_{1}+I_{2}\le\norm{z}_{L^{2}}(\norm{\brz{z}}_{L^{2}}+\norm{\brh{z}}_{L^{2}})$ and the inequality (\ref{brzh}) allow us to write,
\begin{displaymath}
I_{1}+I_{2}\le\esth{1}.
\end{displaymath}
Next we get,
\begin{align*}
I_{3}&\le A^{\frac{1}{2}}(t)\norm{c}_{L^{\infty}}\int_{\T}\abs{z(\alpha)}d\alpha\le 2\int_{\T}\abs{\brz{z}}+\abs{\brh{z}}d\alpha\int_{\T}\abs{z(\alpha)}d\alpha\\
&\le\esth{1}.
\end{align*}
Therefore,
\begin{displaymath}
\frac{d}{dt}\norm{z}_{L^{2}}^{2}(t)\le\esth{3}.
\end{displaymath}

%%%%%%%%%%%%%%%%%%%%%%%%%%%%%%%%%%%%%%%%%%%%%%%%%%%%%%%%%%%%%
\subsection{Estimates on the $H^{3}$ norm}

Taking the $3$ derivatives on the curve, we get
\begin{align*}
&\int_{\T}\pa{3}z(\alpha)\cdot\pa{3}z_{t}(\alpha)d\alpha=\int_{\T}\pa{3}z(\alpha)\cdot\pa{3}\brz{z}d\alpha\\
&+\int_{\T}\pa{3}z(\alpha)\cdot\pa{3}\brh{z}d\alpha+\int_{\T}\pa{3}z(\alpha)\cdot\pa{3}(c(\alpha)\pa{}z(\alpha))d\alpha\\
&\equiv I_{1}+I_{2}+I_{3}.
\end{align*}
Here and in the next section we will study $I_{1}+I_{2}$. We shall estimate $I_{3}$ in section \ref{estimacionesc}. Let estimate first the term $I_{2}$. We can split $I_{2}=J_{1}+J_{2}+J_{3}+J_{4}$,  where
\begin{align*}
&J_{1}=\frac{1}{2\pi}\int_{\T}\int_{\R}\pa{3}z(\alpha)\cdot\pa{3}(\frac{(z(\alpha)-h(\alpha-\beta))^{\bot}}{\abs{z(\alpha)-h(\alpha-\beta)}^{2}})\varpi_{2}(\alpha-\beta)d\beta d\alpha,\\
&J_{2}=\frac{3}{2\pi}\int_{\T}\int_{\R}\pa{3}z(\alpha)\cdot\pa{2}(\frac{(z(\alpha)-h(\alpha-\beta))^{\bot}}{\abs{z(\alpha)-h(\alpha-\beta)}^{2}})\pa{}\varpi_{2}(\alpha-\beta)d\beta d\alpha,\\
&J_{3}=\frac{3}{2\pi}\int_{\T}\int_{\R}\pa{3}z(\alpha)\cdot\pa{}(\frac{(z(\alpha)-h(\alpha-\beta))^{\bot}}{\abs{z(\alpha)-h(\alpha-\beta)}^{2}})\pa{2}\varpi_{2}(\alpha-\beta)d\beta d\alpha,\\
&J_{4}=\frac{1}{2\pi}\int_{\T}\int_{\R}\pa{3}z(\alpha)\cdot\frac{(z(\alpha)-h(\alpha-\beta))^{\bot}}{\abs{z(\alpha)-h(\alpha-\beta)}^{2}}\pa{3}\varpi_{2}(\alpha-\beta)d\beta d\alpha.\\
\end{align*} 
 The most singular terms for $J_{1}$ are:

\begin{align*}
&J_{1}^{1}=\frac{1}{4\pi}\int_{\T}\int_{\R}\pa{3}z(\alpha)\cdot(\frac{(\pa{3}z(\alpha)-\pa{3}h(\alpha-\beta))^{\bot}}{\abs{z(\alpha)-h(\alpha-\beta)}^{2}})\varpi_{2}(\alpha-\beta)d\beta d\alpha,\\
&J_{1}^{2}=-\frac{1}{2\pi}\int_{\T}\int_{\R}\pa{3}z(\alpha)\cdot(\frac{(\Delta zh)^{\bot}\Delta zh\cdot(\pa{3}z(\alpha)-\pa{3}h(\alpha-\beta))}{\abs{z(\alpha)-h(\alpha-\beta)}^{4}})\varpi_{2}(\alpha-\beta)d\beta d\alpha.\\
\end{align*}
Using $\pa{3}z\cdot\pa{3}z^{\bot}=0$,
\begin{displaymath}
\abs{J_{1}^{1}}\le C\norm{d(z,h)}_{L^{\infty}}\norm{z}_{H^{3}}\norm{h}_{H^{3}}\norm{\varpi_{2}}_{L^{\infty}}.
\end{displaymath}
Using the same technique,
\begin{align*}
&\abs{J_{1}^{2}}\le C\norm{d(z,h)}_{L^{\infty}}\norm{z}_{H^{3}}(\norm{z}_{H^{3}}+\norm{h}_{H^{3}})\norm{\varpi_{2}}_{L^{\infty}}.
\end{align*}

Then,
\begin{displaymath}
J_{1}\le\esth{3}.
\end{displaymath}
The most singular term in $J_{2}$ is:
\begin{align*}
&J_{2}^{1}=C\int_{\T}\int_{\R}\pa{3}z(\alpha)\cdot\frac{(\pa{2}z(\alpha)-\pa{2}h(\alpha-\beta))^{\bot}}{\abs{\Delta zh}^{2}}\pa{}\varpi_{2}(\alpha-\beta)d\beta d\beta\\
&\le C\norm{d(z,h)}_{L^{\infty}}\norm{z}_{H^{3}}(\norm{z}_{\mathcal{C}^{2}}+\norm{h}_{\mathcal{C}^{2}})\norm{\varpi_{2}}_{H^{1}}.
\end{align*}
Then,
\begin{displaymath}
J_{2}\le\esth{3}.
\end{displaymath}
For $J_{3}$,
\begin{align*}
&J_{3}^{1}=C\int_{\T}\int_{\R}\pa{3}z(\alpha)\cdot\frac{(\pa{}z(\alpha)-\pa{}h(\alpha-\beta))^{\bot}}{\abs{\Delta zh}^{2}}\pa{2}\varpi_{2}(\alpha-\beta)d\beta d\alpha\\
&\le C\norm{d(z,h)}_{L^{\infty}}(\norm{z}_{\mathcal{C}^{1}}+\norm{h}_{\mathcal{C}^{1}})\norm{z}_{H^{3}}\norm{\varpi_{2}}_{H^{2}}\\
&\le\esth{3}.
\end{align*}
Using integration by parts we will estimate $J_{4}$,
\begin{align*}
&J_{4}=-\frac{1}{2\pi}\int_{\T}\int_{\R}\pa{3}z(\alpha)\cdot\frac{(\Delta zh)^{\bot}}{\abs{\Delta zh}^{2}}\partial_{\beta}\pa{2}\varpi_{2}(\alpha-\beta)d\beta d\alpha\\
&=-\frac{1}{2\pi}\int_{\T}\int_{\R}\pa{3}z(\alpha)\cdot\frac{\pa{\bot}h(\alpha-\beta)}{\abs{\Delta zh}^{2}}\pa{2}\varpi_{2}(\alpha-\beta)d\beta d\alpha\\
&+\frac{1}{\pi}\int_{\T}\int_{\R}\pa{3}z(\alpha)\cdot\frac{(\Delta zh)^{\bot}\Delta zh\cdot\pa{}h(\alpha-\beta)}{\abs{\Delta zh}^{4}}\partial_{\beta}\pa{2}\varpi_{2}(\alpha-\beta)d\beta d\alpha\equiv J_{4}^{1}+J_{4}^{2}.\\
\end{align*}
It is clear,
\begin{align*}
&J_{4}^{1}\le C\norm{d(z,h)}_{L^{\infty}}\norm{h}_{\mathcal{C}^{1}}\norm{z}_{H^{3}}\norm{\varpi_{2}}_{H^{2}},\\
&J_{4}^{2}\le C\norm{d(z,h)}_{L^{\infty}}\norm{h}_{\mathcal{C}^{1}}\norm{z}_{H^{3}}\norm{\varpi_{2}}_{H^{2}}.
\end{align*}
Therefore,
\begin{displaymath}
I_{2}\le\esth{3}.
\end{displaymath}

%%%%%%%%%%%%%%%%%%%%%%%%%%%%%%%%%%%%%%%%%%%%%%%%%%%%%%%%
\subsection{Estimations on $I_{1}$}\label{estimini7}
We can split $I_{1}$ in the following terms:
\begin{align*}
&I_{1}^{1}=\frac{1}{2\pi}\int_{\T}\int_{\R}\pa{3}z(\alpha)\cdot\pa{3}(\frac{(z(\alpha)-z(\alpha-\beta))^{\bot}}{\abs{z(\alpha)-z(\alpha-\beta)}^{2}})\varpi_{1}(\alpha-\beta)d\beta d\alpha,\\
&I_{1}^{2}=\frac{3}{2\pi}\int_{\T}\int_{\R}\pa{3}z(\alpha)\cdot\pa{2}(\frac{(z(\alpha)-z(\alpha-\beta))^{\bot}}{\abs{z(\alpha)-z(\alpha-\beta)}^{2}})\pa{}\varpi_{1}(\alpha-\beta)d\beta d\alpha,\\
&I_{1}^{3}=\frac{3}{2\pi}\int_{\T}\int_{\R}\pa{3}z(\alpha)\cdot\pa{}(\frac{(z(\alpha)-z(\alpha-\beta))^{\bot}}{\abs{z(\alpha)-z(\alpha-\beta)}^{2}})\pa{2}\varpi_{1}(\alpha-\beta)d\beta d\alpha,\\
&I_{1}^{4}=\frac{1}{2\pi}\int_{\T}\int_{\R}\pa{3}z(\alpha)\cdot\frac{(z(\alpha)-z(\alpha-\beta))^{\bot}}{\abs{z(\alpha)-z(\alpha-\beta)}^{2}}\pa{3}\varpi_{1}(\alpha-\beta)d\beta d\alpha.\\
\end{align*}
The terms $I_{1}^{1}$, $I_{1}^{2}$ and $I_{1}^{3}$ can be estimated like in the section $7.2$ in \cite{hele}. Then we have to estimate $I_{1}^{4}$.
\begin{align*}
&I_{1}^{4}=\frac{1}{2\pi}\int_{\T}\int_{\R}\pa{3}z(\alpha)\cdot(\frac{(\Delta z)^{\bot}}{\abs{\Delta z}^{2}}-\frac{\pa{\bot}z(\alpha)}{\beta\abs{\pa{}z(\alpha)}^{2}})\pa{3}\varpi_{1}(\alpha-\beta)d\beta d\alpha\\
&+\frac{1}{2\pi}\int_{\T}\int_{\R}\pa{3}z(\alpha)\cdot(\frac{\pa{\bot}z(\alpha)}{\beta\abs{\pa{}z(\alpha)}^{2}})\pa{3}\varpi_{1}(\alpha-\beta)d\beta d\alpha\equiv I_{1}^{41}+I_{1}^{42}.\\
\end{align*}
Using integration by parts,
\begin{align*}
&I_{1}^{41}=-\frac{1}{2\pi}\int_{\T}\int_{\R}\pa{3}z(\alpha)\cdot(\frac{(\Delta z)^{\bot}}{\abs{\Delta z}^{2}}-\frac{\pa{\bot}z(\alpha)}{\beta\abs{\pa{}z(\alpha)}^{2}})\partial_{\beta}\pa{2}\varpi_{1}(\alpha-\beta)d\beta d\alpha\\
&=\frac{1}{2\pi}\int_{\T}\int_{\R}\pa{3}z(\alpha)\cdot\partial_{\beta}(\frac{(\Delta z)^{\bot}}{\abs{\Delta z}^{2}}-\frac{\pa{\bot}z(\alpha)}{\beta\abs{\pa{}z(\alpha)}^{2}})\pa{2}\varpi_{1}(\alpha-\beta)d\beta d\alpha.
\end{align*}
If we decompose,
\begin{align}
&\partial_{\beta}(\frac{(\Delta z)^{\bot}}{\abs{\Delta z}^{2}}-\frac{\partial^{\bot}_{\alpha}z(\alpha)}{\abs{\partial_{\alpha}z(\alpha)}^{2}\beta})\nonumber\\
&=\frac{(\Delta\partial_{\alpha}z)^{\bot}}{\abs{\Delta z}^{2}}+\partial^{\bot}_{\alpha}z(\alpha)(\frac{1}{\abs{\Delta z}^{2}}-\frac{1}{\abs{\partial_{\alpha}z(\alpha)}^{2}\beta^{2}})-2\frac{(\Delta z)^{\bot}\Delta z\cdot\Delta\partial_{\alpha}z}{\abs{\Delta z}^{4}}\nonumber\\
&-2\frac{(\Delta z)^{\bot}(\Delta z-\beta\partial_{\alpha}z(\alpha))\cdot\partial_{\alpha}z(\alpha)}{\abs{\Delta z}^{4}}-2\frac{(\Delta z-\beta\partial_{\alpha}z(\alpha))^{\bot}\beta\abs{\partial_{\alpha}z(\alpha)}^{2}}{\abs{\Delta z}^{4}}\nonumber\\
&+(\frac{2\partial^{\bot}_{\alpha}z(\alpha)}{\abs{\partial_{\alpha}z(\alpha)}^{2}\beta^{2}}-\frac{2\beta^{2}\partial^{\bot}_{\alpha}z(\alpha)\abs{\partial_{\alpha}z(\alpha)}^{2}}{\abs{\Delta z}^{4}})\nonumber\\
&\equiv F_{1}(\alpha,\beta)+F_{2}(\alpha,\beta)+F_{3}(\alpha,\beta)+F_{4}(\alpha,\beta)+F_{5}(\alpha,\beta)+F_{6}(\alpha,\beta).\label{descomposicion}
\end{align}
We have
\begin{align*}
I_{1}^{41}&=\frac{1}{2\pi}\int_{\T}\int_{\R}\pa{3}z(\alpha)\cdot F_{1}(\alpha,\beta)\pa{2}\varpi_{1}(\alpha-\beta)d\beta d\alpha\\
&+\frac{1}{2\pi}\int_{\T}\int_{\R}\pa{3}z(\alpha)\cdot F_{2}(\alpha,\beta)\pa{2}\varpi_{1}(\alpha-\beta)d\beta d\alpha\\
&+\frac{1}{2\pi}\int_{\T}\int_{\R}\pa{3}z(\alpha)\cdot F_{3}(\alpha,\beta)\pa{2}\varpi_{1}(\alpha-\beta)d\beta d\alpha\\
&+\frac{1}{2\pi}\int_{\T}\int_{\R}\pa{3}z(\alpha)\cdot F_{4}(\alpha,\beta)\pa{2}\varpi_{1}(\alpha-\beta)d\beta d\alpha\\
&+\frac{1}{2\pi}\int_{\T}\int_{\R}\pa{3}z(\alpha)\cdot F_{5}(\alpha,\beta)\pa{2}\varpi_{1}(\alpha-\beta)d\beta d\alpha\\
&+\frac{1}{2\pi}\int_{\T}\int_{\R}\pa{3}z(\alpha)\cdot F_{6}(\alpha,\beta)\pa{2}\varpi_{1}(\alpha-\beta)d\beta d\alpha\\
&\equiv I_{1}^{411}+I_{1}^{412}+I_{1}^{413}+I_{1}^{414}+I_{1}^{415}+I_{1}^{416}.
\end{align*}
Computing
\begin{align*}
&F_{1}(\alpha,\beta)-\frac{\pa{2}z(\alpha)^{\bot}}{\beta\abs{\pa{}z(\alpha)}^{2}}=\frac{\beta^{2}\int_{0}^{1}\pa{2}z(\alpha-\beta ts)^{\bot}-\pa{2}z(\alpha)^{\bot}dsdt}{\abs{\Delta z}^{2}}\\
&+\frac{\beta^{2}\pa{2}z(\alpha)^{\bot}\int_{0}^{1}\int_{0}^{1}(1-t)\pa{2}z(\alpha)dsdt\cdot\int_{0}^{1}\pa{}z(\alpha)+\pa{}z(\alpha-\beta+\beta t)dt}{\abs{\Delta z}^{2}\abs{\pa{}z(\alpha)}^{2}}
\end{align*}
where $\alpha=\alpha-\beta+\beta t+s\beta+\beta ts$, we get
\begin{align*}
&I_{1}^{411}\le C\norm{\F}_{L^{\infty}}\norm{z}_{\mathcal{C}^{2,\delta}}\norm{z}_{H^{3}}\norm{\varpi_{1}}_{H^{2}}+C\norm{\F}^{\frac{3}{2}}_{L^{\infty}}\norm{z}_{\mathcal{C}^{2}}\norm{z}_{H^{3}}\norm{\varpi_{1}}_{H^{2}}\\
&+\frac{1}{2\pi}\int_{\T}\pa{3}z(\alpha)\cdot\frac{\pa{2}z(\alpha)^{\bot}}{\abs{\pa{}z(\alpha)}^{2}}H(\pa{2}\varpi_{1})(\alpha)d\alpha.\\
\end{align*}
Then,
\begin{displaymath}
I_{1}^{411}\le\esth{3}.
\end{displaymath}
Analogously,
\begin{align*}
I_{1}^{412}\le C\norm{\F}^{\frac{3}{2}}_{L^{\infty}}\norm{z}_{\mathcal{C}^{2}}^{2}\norm{z}_{H^{3}}\norm{\varpi_{1}}_{H^{2}}+\frac{1}{\pi}\int_{\T}\pa{3}z(\alpha)\cdot\pa{\bot}z(\alpha)\frac{\pa{2}z(\alpha)\cdot\pa{}z(\alpha)}{\abs{\pa{}z(\alpha)}^{2}}H(\pa{2}\varpi_{1})(\alpha)d\alpha.
\end{align*}
Using the fact that we can split $F_{3}$, with $\phi=\alpha-\beta+\beta t$ and $\psi=\alpha-\beta+\beta t+s\beta-\beta ts$.
\begin{align*}
&F_{3}(\alpha,\beta)=-2\beta^{3}\int_{0}^{1}\pa{\bot}z(\phi)dt\int_{0}^{1}\pa{}z(\phi)dt\cdot\int_{0}^{1}\pa{2}z(\phi)dt(\frac{1}{\abs{\Delta z}^{4}}-\frac{1}{\beta^{4}\abs{\pa{}z(\alpha)}^{4}})\\
&-\frac{2\int_{0}^{1}\pa{\bot}z(\phi)dt\int_{0}^{1}\pa{}z(\phi)dt\cdot\int_{0}^{1}\pa{2}z(\phi)dt}{\beta\abs{\pa{}z(\alpha)}^{4}}
\end{align*}
and
\begin{align*}
&\frac{1}{\abs{\Delta z}^{4}}-\frac{1}{\beta^{4}\abs{\pa{}z(\alpha)}^{4}}\\
&=\frac{\beta\int_{0}^{1}\int_{0}^{1}\pa{2}z(\psi)(t-1)dtds\cdot\int_{0}^{1}\pa{}z(\alpha)+\pa{}z(\phi)dt\int_{0}^{1}\abs{\pa{}z(\alpha)}^{2}+\abs{\pa{}z(\phi)}^{2}}{\abs{\Delta z}^{4}\abs{\pa{}z(\alpha)}^{4}},
\end{align*}
we get,
\begin{align*}
&I_{1}^{413}\le\esth{3}\\
&-\frac{1}{\pi}\int_{\T}\pa{3}z(\alpha)\cdot\pa{\bot}z(\alpha)\frac{\pa{}z(\alpha)\cdot\pa{2}z(\alpha)}{\abs{\pa{}z(\alpha)}^{4}}H(\pa{2}\varpi_{1})(\alpha)d\alpha\\
&\le\esth{3}+C\norm{\F}_{L^{\infty}}\norm{z}_{\mathcal{C}^{2}}\norm{z}_{H^{3}}\norm{\varpi_{1}}_{H^{2}}.
\end{align*}
For $I_{1}^{414}$ and $I_{1}^{415}$ it is easy to see that it is bounded in the same way as the above terms. Let us study the term $I_{1}^{416}$.
 Since,
 \begin{align*}
 &-\frac{1}{2}F_{6}(\alpha,\beta)=\partial^{\bot}_{\alpha}z(\alpha)\frac{\beta^{4}\abs{\partial_{\alpha}z(\alpha)}^{4}-\abs{\Delta z}^{4}}{\abs{\Delta z}^{4}\abs{\partial_{\alpha}z(\alpha)}^{2}\beta^{2}}\\
 &=\partial^{\bot}_{\alpha}z(\alpha)\frac{\beta^{3}\int_{0}^{1}\int_{0}^{1}(\partial^{2}_{\alpha}z(\psi)-\partial^{2}_{\alpha}z(\alpha))(1-s)dtds\int^{1}_{0}[\partial_{\alpha}z(\alpha)+\partial_{\alpha}z(\phi)]ds\int^{1}_{0}[\abs{\partial_{\alpha}z(\alpha)}^{2}+\abs{\partial_{\alpha}z(\phi)}^{2}]ds}{\abs{\Delta z}^{4}\abs{\partial_{\alpha}z(\alpha)}^{2}}\\
 &+\frac{\partial^{\bot}_{\alpha}z(\alpha)}{2}\frac{\beta^{4}\partial^{2}_{\alpha}z(\alpha)\int_{0}^{1}\int_{0}^{1}\partial^{2}_{\alpha}z(\eta)(s-1)dtds\int^{1}_{0}[\abs{\partial_{\alpha}z(\alpha)}^{2}+\abs{\partial_{\alpha}z(\phi)}^{2}]ds}{\abs{\Delta z}^{4}\abs{\partial_{\alpha}z(\alpha)}^{2}}\\
 &+\partial^{\bot}_{\alpha}z(\alpha)\frac{\beta^{4}\partial^{2}_{\alpha}z(\alpha)\partial_{\alpha}z(\alpha)\int_{0}^{1}\int_{0}^{1}\partial_{\alpha}z(\eta)\cdot\partial^{2}_{\alpha}z(\eta)(s-1)dtds}{\abs{\Delta z}^{4}\abs{\partial_{\alpha}z(\alpha)}^{2}}+\partial^{\bot}_{\alpha}z(\alpha)\frac{\beta^{3}\partial^{2}_{\alpha}z(\alpha)\partial_{\alpha}z(\alpha)}{\abs{\Delta z}^{4}}\\
 &\equiv U_{1}(\alpha,\beta)+U_{2}(\alpha,\beta)+U_{3}(\alpha,\beta)+U_{4}(\alpha,\beta)
 \end{align*}
 we get,
 \begin{align*}
 I_{1}^{416}=&-\frac{1}{\pi}\int_{\T}\int_{\R}\pa{3}z(\alpha)\cdot U_{1}(\alpha,\beta)\partial^{2}_{\alpha}\varpi(\alpha-\beta)d\alpha d\beta-\frac{1}{\pi}\int_{\T}\int_{\R}\pa{3}z(\alpha)\cdot U_{2}(\alpha,\beta)\partial^{2}_{\alpha}\varpi(\alpha-\beta)d\alpha d\beta\\
 &-\frac{1}{\pi}\int_{\T}\int_{\R}\pa{3}z(\alpha)\cdot U_{3}(\alpha,\beta)\partial^{2}_{\alpha}\varpi(\alpha-\beta)d\alpha d\beta-\frac{1}{\pi}\int_{\T}\int_{\R}\pa{3}z(\alpha)\cdot U_{4}(\alpha,\beta)\partial^{2}_{\alpha}\varpi(\alpha-\beta)d\alpha d\beta\\
 &\equiv Q_{1}+Q_{2}+Q_{3}+Q_{4}.
 \end{align*}
 It is clear,
 \begin{align*}
 &Q_{1}\le C\norm{\F}_{L^{\infty}}\norm{z}_{\mathcal{C}^{2,\delta}}\norm{z}_{H^{3}}\norm{\varpi_{1}}_{H^{2}},\\
 &Q_{2}\le C\norm{\F}^{2}_{L^{\infty}}\norm{z}_{\mathcal{C}^{2}}^{2}\norm{z}_{\mathcal{C}^{1}}\norm{z}_{H^{3}}\norm{\varpi_{1}}_{H^{2}},\\
 &Q_{3}\le C\norm{\F}^{2}_{L^{\infty}}\norm{z}_{\mathcal{C}^{2}}^{2}\norm{z}_{\mathcal{C}^{1}}\norm{z}_{H^{3}}\norm{\varpi_{1}}_{H^{2}},\\
 &Q_{4}\le C\norm{\F}^{2}_{L^{\infty}}\norm{z}_{\mathcal{C}^{2}}^{2}\norm{z}_{\mathcal{C}^{1}}\norm{z}_{H^{3}}\norm{\varpi_{1}}_{H^{2}}\\
 &-\frac{1}{\pi}\int_{\T}\pa{3}z(\alpha)\cdot\pa{\bot}z(\alpha)\frac{\pa{2}z(\alpha)\cdot\pa{}z(\alpha)}{\abs{\pa{}z(\alpha)}^{4}}H(\pa{2}\varpi_{1})(\alpha)d\alpha.\\
 \end{align*}
 Therefore,
 \begin{displaymath}
 I_{1}^{41}\le\esth{3}.
 \end{displaymath}
 Now, we have to study $I_{1}^{42}$. Using $\pa{}H=\Lambda$,
\begin{align*}
&I_{1}^{42}=\frac{1}{2\pi}\int_{\T}\pa{3}z(\alpha)\cdot\frac{\pa{\bot}z(\alpha)}{\abs{\pa{}z(\alpha)}^{2}}H(\pa{3}\varpi_{1})(\alpha)d\alpha=\frac{1}{2\pi}\int_{\T}\pa{3}z(\alpha)\cdot\frac{\pa{\bot}z(\alpha)}{\abs{\pa{}z(\alpha)}^{2}}\Lambda(\pa{2}\varpi_{1})(\alpha)d\alpha\\
&=\frac{1}{2\pi A(t)}\int_{\T}\Lambda(\pa{3}z(\alpha)\cdot\pa{\bot}z(\alpha))\pa{2}\varpi_{1}(\alpha)d\alpha.\\
\end{align*}
Since $\varpi_{1}=-\gamma_{1}T_{1}(\varpi_{1})-\gamma_{1}T_{2}(\varpi_{2})-N\pa{}z_{2}(\alpha)$ we split $I_{1}^{42}$,
\begin{align*}
&I_{1}^{421}=\frac{-N}{2\pi A(t)}\int_{\T}\Lambda(\pa{3}z(\alpha)\cdot\pa{\bot}z(\alpha))\pa{3}z_{2}(\alpha)d\alpha,\\
&I_{1}^{422}=\frac{-\gamma_{1}}{2\pi A(t)}\int_{\T}\Lambda(\pa{3}z\cdot\pa{\bot}z)(\alpha)(\pa{2}T_{1}(\varpi_{1})+\pa{2}T_{2}(\varpi_{2}))d\alpha.\\
\end{align*}
We can write $I_{1}^{421}=L_{1}+L_{2}$ where
\begin{align*}
&L_{1}=\frac{N}{2\pi A(t)}\int_{\T}\Lambda(\pa{3}z_{1}\pa{}z_{2})(\alpha)\pa{3}z_{2}(\alpha)d\alpha,\\
&L_{2}=\frac{-N}{2\pi A(t)}\int_{\T}\Lambda(\pa{3}z_{2}\pa{}z_{1})(\alpha)\pa{3}z_{2}(\alpha)d\alpha.\\
\end{align*}
Use the commutator estimation allow us,
\begin{align*}
&L_{1}\le C\norm{z}_{\mathcal{C}^{2,\delta}}\norm{z}_{H^{3}}^{2}+\frac{N}{2\pi A(t)}\int_{\T}\pa{}z_{2}(\alpha)\pa{3}z_{2}(\alpha)\Lambda(\pa{3}z_{1})(\alpha)d\alpha\\
&\le\esth{3}+L_{1}^{1}.
\end{align*}
Since $A(t)=\abs{\pa{}z(\alpha)}^{2}$ if we derivate twice with $\pa{}$ we get
\begin{displaymath}
\pa{}z_{2}(\alpha)\pa{3}z_{2}(\alpha)=-\pa{}z_{1}(\alpha)\pa{3}z_{1}(\alpha)-\abs{\pa{2}z(\alpha)}^{2}.
\end{displaymath}
Then,
\begin{align*}
&L_{1}^{1}=-\frac{N}{2\pi A(t)}\int_{\T}\abs{\pa{2}z(\alpha)}^{2}\Lambda(\pa{3}z_{1})(\alpha)d\alpha-\frac{N}{2\pi A(t)}\int_{\T}\pa{}z_{1}(\alpha)\pa{3}z_{1}(\alpha)\Lambda(\pa{3}z_{1})(\alpha)d\alpha\\
&=\frac{N}{2\pi A(t)}\int_{\T}\pa{}\abs{\pa{2}z(\alpha)}^{2}H(\pa{3}z_{1})(\alpha)d\alpha-\frac{N}{2\pi A(t)}\int_{\T}\pa{}z_{1}(\alpha)\pa{3}z_{1}(\alpha)\Lambda(\pa{3}z_{1})(\alpha)d\alpha\\
&\le C\norm{\F}_{L^{\infty}}\norm{z}_{\mathcal{C}^{2}}\norm{z}_{H^{3}}^{2}-\frac{N}{2\pi A(t)}\int_{\T}\pa{}z_{1}(\alpha)\pa{3}z_{1}(\alpha)\Lambda(\pa{3}z_{1})(\alpha)d\alpha.\\
\end{align*}
In the same way, using the commutator estimation we have,
\begin{displaymath}
L_{2}\le C\norm{\F}_{L^{\infty}}\norm{z}_{\mathcal{C}^{2,\delta}}\norm{z}_{H^{3}}^{2}-\frac{N}{2\pi A(t)}\int_{\T}\pa{}z_{1}(\alpha)\pa{3}z_{2}(\alpha)\Lambda(\pa{3}z_{2})(\alpha)d\alpha.
\end{displaymath}
Therefore,
\begin{align*}
&I_{1}^{421}\le\esth{3}\\
&-\frac{N}{2\pi A(t)}\int_{\T}\pa{}z_{1}(\alpha)\pa{3}z(\alpha)\cdot\Lambda(\pa{3}z)(\alpha)d\alpha.
\end{align*}
Here we can observe that a part of the Rayleigh-Taylor condition appears. 
Let us estimate the term $I_{1}^{422}$. We can split this term in 
\begin{align*}
&L_{3}=\frac{-\gamma_{1}}{\pi A(t)}\int_{\T}\Lambda(\pa{3}z\cdot\pa{\bot}z)(\alpha)(\pa{2}\brz{z}+\pa{2}\brh{z})\cdot\pa{}z(\alpha)d\alpha,\\
&L_{4}=\frac{-\gamma_{1}}{\pi A(t)}\int_{\T}\Lambda(\pa{3}z\cdot\pa{\bot}z)(\alpha)(\pa{}\brz{z}+\pa{}\brh{z})\cdot\pa{2}z(\alpha)d\alpha,\\
&L_{5}=\frac{-\gamma_{1}}{\pi A(t)}\int_{\T}\Lambda(\pa{3}z\cdot\pa{\bot}z)(\alpha)(\brz{z}+\brh{z})\cdot\pa{3}z(\alpha)d\alpha.\\
\end{align*}
We will estimate $L_{3}+L_{4}$ and then we will find the rest of the R-T condition in the estimations of the term $L_{5}$.
For $L_{3}$, using integration by parts,
\begin{align*}
&L_{3}=\frac{\gamma_{1}}{\pi A(t)}\int_{\T}H(\pa{3}z\cdot\pa{\bot}z)(\alpha)(\pa{2}\brz{z}+\pa{2}\brh{z})\cdot\pa{2}z(\alpha)d\alpha\\
&+\frac{\gamma_{1}}{\pi A(t)}\int_{\T}H(\pa{3}z\cdot\pa{\bot}z)(\alpha)(\pa{3}\brz{z}+\pa{3}\brh{z})\cdot\pa{}z(\alpha)d\alpha\\
&\equiv L_{3}^{1}+L_{3}^{2}.
\end{align*}
Directly, using (\ref{brzh})
\begin{align*}
&L_{3}^{1}\le C\norm{\F}_{L^{\infty}}\norm{\pa{3}z\cdot\pa{\bot}z}_{L^{2}}(\norm{\pa{2}\brz{z}}_{L^{2}}+\norm{\pa{2}\brh{z}}_{L^{2}})\norm{z}_{\mathcal{C}^{2}}\\
&\le\esth{3}.
\end{align*}
For $L_{3}^{2}$, we write
\begin{align*}
&L_{3}^{2}=\frac{\gamma_{1}}{\pi A(t)}\int_{\T}H(\pa{3}z\cdot\pa{\bot}z)(\alpha)\pa{3}\brz{z}\cdot\pa{}z(\alpha)d\alpha\\
&+\frac{\gamma_{1}}{\pi A(t)}\int_{\T}H(\pa{3}z\cdot\pa{\bot}z)(\alpha)\pa{3}\brh{z}\cdot\pa{}z(\alpha)d\alpha\equiv L_{3}^{21}+L_{3}^{22}.
\end{align*}
The application of the Leibniz's rule to $\pa{3}\brz{z}$ produces terms which can be estimated with the same tools used before. The most singular terms for $L_{3}^{21}$ are
\begin{align*}
&L_{3}^{211}=\frac{\gamma_{1}}{\pi A(t)}\int_{\T}H(\pa{3}z\cdot\pa{\bot}z)(\alpha)\pa{}(BR(\pa{2}\varpi_{1},z)_{z})\cdot\pa{}z(\alpha)d\alpha,\\
&L_{3}^{212}=\frac{\gamma_{1}}{\pi A(t)}\int_{\T}\int_{\R}H(\pa{3}z\cdot\pa{\bot}z)(\alpha)\frac{(\Delta\pa{3}z)^{\bot}\cdot\pa{}z(\alpha)}{\abs{\Delta z}^{2}}\varpi_{1}(\alpha-\beta)d\beta d\alpha,\\
&L_{3}^{213}=\frac{\gamma_{1}}{\pi A(t)}\int_{\T}\int_{\R}H(\pa{3}z\cdot\pa{\bot}z)(\alpha)\frac{(\Delta z)^{\bot}\cdot\pa{}z(\alpha)\Delta z\cdot\Delta\pa{3}z}{\abs{\Delta z}^{4}}\varpi_{1}(\alpha-\beta)d\beta d\alpha.\\
\end{align*}
Since,
\begin{displaymath}
\pa{}(BR(\pa{2}\varpi_{1},z)_{z})\cdot\pa{}z(\alpha)=\pa{}(T_{1}(\pa{2}\varpi_{1}))-BR(\pa{2}\varpi_{1},z)_{z}\cdot\pa{2}z(\alpha).
\end{displaymath}
And using the estimations on $\norm{\OT}_{L^{2}\times L^{2}\to H^{1}\times H^{1}}$ and the estimations on $\brz{z}$ we get
\begin{align*}
L_{3}^{211}&\le C\norm{\F}_{L^{\infty}}\norm{\pa{3}z\cdot\pa{\bot}z}_{L^{2}}(\norm{T_{1}(\pa{2}\varpi_{1})}_{H^{1}}+\norm{BR(\pa{2}\varpi_{1},z)_{z}}_{L^{2}}\norm{z}_{\mathcal{C}^{2}}).\\
&\le\esth{3}
\end{align*}
 For $L_{3}^{212}$ we get,
 \begin{align*}
 &M_{1}=\frac{\gamma_{1}}{\pi A(t)}\int_{\T}\int_{\R}H(\pa{3}z\cdot\pa{\bot}z)(\alpha)\Delta\pa{3}z^{\bot}\cdot\pa{}z(\alpha)\varpi_{1}(\alpha-\beta)(\frac{1}{\abs{\Delta z}^{2}}-\frac{1}{\beta^{2}\abs{\pa{}z(\alpha)}^{2}})d\beta d\alpha,\\
 &M_{2}=\frac{\gamma_{1}}{\pi A(t)}\int_{\T}\int_{\R}H(\pa{3}z\cdot\pa{\bot}z)(\alpha)\Delta\pa{3}z^{\bot}\cdot\pa{}z(\alpha)\frac{\varpi_{1}(\alpha-\beta)}{\beta^{2}\abs{\pa{}z(\alpha)}^{2}}d\beta d\alpha.\\
 \end{align*}
 If we compute $\frac{1}{\abs{\Delta z}^{2}}-\frac{1}{\beta^{2}\abs{\pa{}z(\alpha)}^{2}}=B_{1}+B_{2}+B_{3}$ where
\begin{align*}
&B_1(\alpha,\beta)=\frac{\beta\int_{0}^{1}\int_{0}^{1}\frac{\partial^{2}_{\alpha}z(\psi)-\partial^{2}_{\alpha}z(\alpha)}{\abs{\psi-\alpha}^{\delta}}\beta^{\delta}(1+s+t-st)^{\delta}(1-s)dtds\int^{1}_{0}[\partial_{\alpha}z(\alpha)+\partial_{\alpha}z(\phi)]ds}{\abs{z(\alpha)-z(\alpha-\beta)}^{2}\abs{\partial_{\alpha}z(\alpha)}^{2}},\\
&B_3(\alpha,\beta)=\frac{\beta^{2}\partial^{2}_{\alpha}z(\alpha)\int_{0}^{1}\int_{0}^{1}\partial^{2}_{\alpha}z(\eta)(s-1)dtds}{\abs{z(\alpha)-z(\alpha-\beta)}^{2}\abs{\partial_{\alpha}z(\alpha)}^{2}},\\
&B_4(\alpha,\beta)=\frac{\beta\partial^{2}_{\alpha}z(\alpha)2\partial_{\alpha}z(\alpha)}{\abs{z(\alpha)-z(\alpha-\beta)}^{2}\abs{\partial_{\alpha}z(\alpha)}^{2}},
\end{align*}
we can split $M_{1}=M_{1}^{1}+M_{1}^{2}+M_{1}^{3}$ for
\begin{align*}
&M_{1}^{1}=\frac{\gamma_{1}}{\pi A(t)}\int_{\T}\int_{\R}H(\pa{3}z\cdot\pa{\bot}z)(\alpha)\Delta\pa{3}z^{\bot}\cdot\pa{}z(\alpha)\varpi_{1}(\alpha-\beta)B_{1}(\alpha,\beta)d\beta d\alpha,\\
&M_{1}^{2}=\frac{\gamma_{1}}{\pi A(t)}\int_{\T}\int_{\R}H(\pa{3}z\cdot\pa{\bot}z)(\alpha)\Delta\pa{3}z^{\bot}\cdot\pa{}z(\alpha)\varpi_{1}(\alpha-\beta)B_{2}(\alpha,\beta)d\beta d\alpha,\\
&M_{1}^{3}=\frac{\gamma_{1}}{\pi A(t)}\int_{\T}\int_{\R}H(\pa{3}z\cdot\pa{\bot}z)(\alpha)\Delta\pa{3}z^{\bot}\cdot\pa{}z(\alpha)\varpi_{1}(\alpha-\beta)B_{3}(\alpha,\beta)d\beta d\alpha.\\
\end{align*}
It is easy see that 
\begin{align*}
&M_{1}^{1}\le C\norm{\F}^{2}_{L^{\infty}}\norm{z}_{\mathcal{C}^{2,\delta}}\norm{z}_{\mathcal{C}^{1}}\norm{\varpi_{1}}_{L^{\infty}}\norm{z}^{2}_{H^{3}},\\
&M_{1}^{2}\le C\norm{\F}^{2}_{L^{\infty}}\norm{z}^{2}_{\mathcal{C}^{2}}\norm{\varpi_{1}}_{L^{\infty}}\norm{z}^{2}_{H^{3}}.\\
\end{align*}
We need to study more precisely the term $M_{1}^{3}$. Again, we decompose $M_{1}^{3}$ in the following terms:
\begin{align*}
&M_{1}^{31}=\frac{2\gamma_{1}}{\pi A^{2}(t)}\int_{\T}\int_{\R}H(\pa{3}z\cdot\pa{\bot}z)(\alpha)\Delta\pa{3}z^{\bot}\cdot\pa{}z(\alpha)\varpi_{1}(\alpha-\beta)\beta\pa{2}z(\alpha)\cdot\pa{}z(\alpha)B(\alpha,\beta)d\beta d\alpha,\\
&M_{1}^{32}=\frac{2\gamma_{1}}{\pi A^{3}(t)}\int_{\T}\int_{\R}H(\pa{3}z\cdot\pa{\bot}z)(\alpha)\Delta\pa{3}z^{\bot}\cdot\pa{}z(\alpha)\frac{\varpi_{1}(\alpha-\beta)}{\beta}\pa{2}z(\alpha)\cdot\pa{}z(\alpha)d\beta d\alpha,\\
\end{align*}
where
\begin{displaymath}
B(\alpha,\beta)=\frac{1}{\abs{\Delta z}^{2}}-\frac{1}{\beta^{2}\abs{\pa{}z(\alpha)}^{2}}=\frac{\beta\int_{0}^{1}\int_{0}^{1}\partial^{2}_{\alpha}z(\psi)(1-s)dtds\cdot\int^{1}_{0}[\partial_{\alpha}z(\alpha)+\partial_{\alpha}z(\phi)]ds}{\abs{z(\alpha)-z(\alpha-\beta)}^{2}\abs{\partial_{\alpha}z(\alpha)}^{2}}.
\end{displaymath}
Directly,
\begin{displaymath}
M_{1}^{31}\le C\norm{\F}_{L^{\infty}}\norm{z}_{\mathcal{C}^{2}}^{2}\norm{\varpi_{1}}_{L^{\infty}}\norm{z}_{H^{3}}.
\end{displaymath}
For $M_{1}^{32}$,
\begin{align*}
&M_{1}^{321}=\frac{2\gamma_{1}}{\pi A^{3}(t)}\int_{\T}\int_{\R}H(\pa{3}z\cdot\pa{\bot}z)(\alpha)\pa{3}z(\alpha)^{\bot}\cdot\pa{}z(\alpha)\frac{\varpi_{1}(\alpha-\beta)}{\beta}\pa{2}z(\alpha)\cdot\pa{}z(\alpha)d\beta d\alpha\\
&=\frac{2\gamma_{1}}{\pi A^{3}(t)}\int_{\T}H(\pa{3}z\cdot\pa{\bot}z)(\alpha)\pa{3}z(\alpha)^{\bot}\cdot\pa{}z(\alpha)H\varpi_{1}(\alpha)\pa{2}z(\alpha)\cdot\pa{}z(\alpha)d\beta d\alpha\\
&\le C\norm{\F}^{\frac{3}{2}}_{L^{\infty}}\norm{z}_{\mathcal{C}^{2}}\norm{H\varpi_{1}}_{L^{\infty}}\norm{z}^{2}_{H^{3}}
\end{align*}
and
\begin{align*}
&M_{1}^{322}=\frac{2\gamma_{1}}{\pi A^{3}(t)}\int_{\T}\int_{\R}H(\pa{3}z\cdot\pa{\bot}z)(\alpha)\pa{3}z(\alpha-\beta)^{\bot}\cdot\pa{}z(\alpha)\frac{\varpi_{1}(\alpha-\beta)}{\beta}\pa{2}z(\alpha)\cdot\pa{}z(\alpha)d\beta d\alpha\\
&\le C\norm{\F}^{\frac{3}{2}}_{L^{\infty}}\norm{z}_{\mathcal{C}^{2}}\norm{\varpi}_{\mathcal{C}^{1}}\norm{z}^{2}_{H^{3}}\\
&+\frac{2\gamma_{1}}{\pi A^{3}(t)}\int_{\T}H(\pa{3}z\cdot\pa{\bot}z)(\alpha)H(\pa{3}z^{\bot})(\alpha)\cdot\pa{}z(\alpha)\varpi_{1}(\alpha)\pa{2}z(\alpha)\cdot\pa{}z(\alpha)d\alpha\\
&\le C\norm{\F}^{\frac{3}{2}}_{L^{\infty}}\norm{z}_{\mathcal{C}^{2}}\norm{\varpi}_{\mathcal{C}^{1}}\norm{z}^{2}_{H^{3}}.\\
\end{align*}
Therefore, $M_{1}\le\esth{3}$.

For $M_{2}$ we proceed as follows:
\begin{align*}
&M_{2}^{1}=\frac{\gamma_{1}}{\pi A^{2}(t)}\int_{\T}\int_{\R}H(\pa{3}z\cdot\pa{\bot}z)(\alpha)\Delta\pa{3}z^{\bot}\cdot\pa{}z(\alpha)\frac{\varpi_{1}(\alpha-\beta)-\varpi_{1}(\alpha)}{\beta^{2}}d\beta d\alpha,\\
&M_{2}^{2}=\frac{\gamma_{1}}{\pi A^{2}(t)}\int_{\T}H(\pa{3}z\cdot\pa{\bot}z)(\alpha)\Lambda(\pa{3}z^{\bot})(\alpha)\cdot\pa{}z(\alpha)\varpi_{1}(\alpha)d\alpha.\\
\end{align*}
In the same way as before,
\begin{align*}
&M_{2}^{1}=\frac{\gamma_{1}}{\pi A^{2}(t)}\int_{\T}H(\pa{3}z\cdot\pa{\bot}z)(\alpha)\pa{3}z(\alpha)^{\bot}\cdot\pa{}z(\alpha)\Lambda\varpi_{1}(\alpha)d\alpha\\
&+\frac{\gamma_{1}}{\pi A^{2}(t)}\int_{\T}\int_{\R}H(\pa{3}z\cdot\pa{\bot}z)(\alpha)\pa{3}z(\alpha-\beta)^{\bot}\cdot\pa{}z(\alpha)\frac{\int_{0}^{1}\pa{}\varpi_{1}(\alpha-\beta+\beta t)-\pa{}\varpi_{1}(\alpha)dt}{\beta}d\beta d\alpha\\
&+\frac{\gamma_{1}}{\pi A^{2}(t)}\int_{\T}H(\pa{3}z\cdot\pa{\bot}z)(\alpha)H(\pa{3}z^{\bot})(\alpha)\cdot\pa{}z(\alpha)\pa{}\varpi_{1}(\alpha)d\alpha\\
&\le C\norm{\F}_{L^{\infty}}\norm{\varpi_{1}}_{\mathcal{C}^{1,\delta}}\norm{z}^{2}_{H^{3}}.
\end{align*}
Using the commutator estimation and $\Lambda H=-\pa{}$
\begin{align*}
&M_{2}^{2}=\frac{\gamma_{1}}{\pi A^{2}(t)}\int_{\T}H(\pa{3}z\cdot\pa{\bot}z)(\alpha)(\Lambda(\pa{3}z^{\bot})(\alpha)\cdot\pa{}z(\alpha)\varpi_{1}(\alpha)-\Lambda(\pa{3}z^{\bot}\cdot\pa{}z\varpi_{1})(\alpha))d\alpha\\
&+\frac{\gamma_{1}}{\pi A^{2}(t)}\int_{\T}H(\pa{3}z\cdot\pa{\bot}z)(\alpha)\Lambda(\pa{3}z^{\bot}\cdot\pa{}z\varpi_{1})(\alpha)d\alpha\\
&\le C\norm{\F}_{L^{\infty}}^{2}\norm{\pa{3}z\pa{}z}_{L^{2}}\norm{\varpi_{1}\pa{}z}_{\mathcal{C}^{1,\delta}}\norm{\pa{3}z}_{L^{2}}\\
&-\frac{\gamma_{1}}{\pi A^{2}(t)}\int_{\T}\pa{}(\pa{3}z\cdot\pa{\bot}z)(\alpha)\pa{3}z^{\bot}(\alpha)\cdot\pa{}z(\alpha)\varpi_{1}(\alpha)d\alpha\\
&\le\esth{3}\\
&+\frac{\gamma_{1}}{\pi A^{2}(t)}\int_{\T}\pa{}(\pa{3}z\cdot\pa{\bot}z)(\alpha)\pa{3}z(\alpha)\cdot\pa{\bot}z(\alpha)\varpi_{1}(\alpha)d\alpha\\
&\le\esth{3}\\
&+C\norm{\F}_{L^{\infty}}\norm{\varpi_{1}}_{\mathcal{C}^{1}}\norm{z}_{H^{3}}
\end{align*}
We can estimate $L_{3}^{213}$ as before. Then, we get the estimation for $L_{3}^{21}$.

Let us estimate $L_{3}^{22}$. The most singular terms are
\begin{align*}
&L_{3}^{221}=\frac{\gamma_{1}}{\pi A(t)}\int_{\T}H(\pa{3}z\cdot\pa{\bot}z)(\alpha)\pa{}(BR(\pa{2}\varpi_{2},h)_{z})\cdot\pa{}z(\alpha)d\alpha,\\
&L_{3}^{222}=\frac{\gamma_{1}}{\pi A(t)}\int_{\T}\int_{\R}H(\pa{3}z\cdot\pa{\bot}z)(\alpha)\frac{(\Delta\pa{3}zh)^{\bot}\cdot\pa{}z(\alpha)}{\abs{\Delta zh}^{2}}\varpi_{2}(\alpha-\beta)d\beta d\alpha\\
&L_{3}^{223}=\frac{\gamma_{1}}{\pi A(t)}\int_{\T}\int_{\R}H(\pa{3}z\cdot\pa{\bot}z)(\alpha)\frac{(\Delta zh)^{\bot}\cdot\pa{}z(\alpha)\Delta zh\cdot\Delta\pa{3}zh}{\abs{\Delta zh}^{4}}\varpi_{2}(\alpha-\beta)d\beta d\alpha\\
\end{align*}
Since
\begin{align*}
\pa{}(BR(\pa{2}\varpi_{2},h)_{z})\cdot\pa{}z=\pa{}(T_{2}(\pa{2}\varpi_{2}))-BR(\pa{2}\varpi_{2},h)_{z}\cdot\pa{2}z
\end{align*}
then,
\begin{align*}
L_{3}^{221}\le C\norm{\F}_{L^{\infty}}\norm{\pa{3}z\pa{\bot}z}_{L^{2}}(\norm{T_{2}(\pa{2}\varpi_{2})}_{H^{1}}+\norm{BR(\pa{2}\varpi_{2},h)_{z}}_{L^{2}}\norm{\pa{2}z}_{L^{\infty}})
\end{align*}
Using the estimation on $\norm{\OT\varpi}_{H^{1}}$ and $\brh{z}$, $L_{3}^{221}$ is controlled.
We can get,
\begin{align*}
L_{3}^{222}\le C\norm{\F}_{L^{\infty}}\norm{d(z,h)}_{L^{\infty}}\norm{\pa{3}z\pa{}z}_{L^{2}}(\norm{\pa{3}z}_{L^{2}}+\norm{\pa{3}h}_{L^{2}})\norm{z}_{\mathcal{C}^{1}}\norm{\varpi_{2}}_{L^{\infty}}.
\end{align*}
For $L_{3}^{223}$ we get the same
\begin{align*}
L_{3}^{223}\le C\norm{\F}_{L^{\infty}}\norm{d(z,h)}_{L^{\infty}}\norm{\pa{3}z}_{L^{2}}(\norm{\pa{3}z}_{L^{2}}+\norm{\pa{3}h}_{L^{2}})\norm{z}^{2}_{\mathcal{C}^{1}}\norm{\varpi_{2}}_{L^{\infty}}.
\end{align*}
For $L_{4}$ integrating by parts we obtain:
\begin{align*}
&L_{4}\le C\norm{\F}_{L^{\infty}}\norm{\pa{3}z\pa{}z}_{L^{2}}((\norm{\pa{2}\brz{z}}_{L^{2}}+\norm{\pa{2}\brh{z}})\norm{\pa{2}z}_{L^{\infty}}\\
&+(\norm{\pa{}\brz{z}}_{L^{\infty}}+\norm{\pa{}\brh{z}}_{L^{\infty}})\norm{\pa{3}z}_{L^{2}})\\
&\le\esth{3}.
\end{align*}
Finally we have to find $\sigma(\alpha)$ in $L_{5}$ to finish the estimations. To do that let us split $L_{5}=L_{5}^{1}+L_{5}^{2}+L_{5}^{3}+L_{5}^{4}$ where
\begin{align*}
&L_{5}^{1}=\frac{\gamma_{1}}{\pi A(t)}\int_{\T}\Lambda(\pa{3}z_{1}\pa{}z_{2})(\alpha)(BR_{1}(\varpi_{1},z)_{z}+BR_{1}(\varpi_{2},h)_{z})\pa{3}z_{1}(\alpha)d\alpha,\\
&L_{5}^{2}=\frac{\gamma_{1}}{\pi A(t)}\int_{\T}\Lambda(\pa{3}z_{1}\pa{}z_{2})(\alpha)(BR_{2}(\varpi_{1},z)_{z}+BR_{2}(\varpi_{2},h)_{z})\pa{3}z_{2}(\alpha)d\alpha,\\
&L_{5}^{3}=-\frac{\gamma_{1}}{\pi A(t)}\int_{\T}\Lambda(\pa{3}z_{2}\pa{}z_{1})(\alpha)(BR_{1}(\varpi_{1},z)_{z}+BR_{1}(\varpi_{2},h)_{z})\pa{3}z_{1}(\alpha)d\alpha,\\
&L_{5}^{4}=-\frac{\gamma_{1}}{\pi A(t)}\int_{\T}\Lambda(\pa{3}z_{2}\pa{}z_{1})(\alpha)(BR_{2}(\varpi_{1},z)_{z}+BR_{2}(\varpi_{2},h)_{z})\pa{3}z_{2}(\alpha)d\alpha.\\
\end{align*}
In order to reduce the notation, we denote $BR_{i}=BR_{i}(\varpi_{1},z)_{z}+BR_{i}(\varpi_{2},h)_{z}$ for $i=1,2$.
Then we can write,
\begin{align*}
&L_{5}^{1}=\frac{\gamma_{1}}{\pi A(t)}\int_{\T}(\Lambda(\pa{3}z_{1}\pa{}z_{2})(\alpha)-\pa{}z_{2}\Lambda(\pa{3}z)(\alpha))BR_{1}\pa{3}z_{1}(\alpha)d\alpha\\
&+\frac{\gamma_{1}}{\pi A(t)}\int_{\T}BR_{1}\pa{}z_{2}(\alpha)\pa{3}z_{1}(\alpha)\Lambda(\pa{3}z_{1})(\alpha)d\alpha\\
&\le C\norm{\pa{}z}_{\mathcal{C}^{1,\delta}}\norm{BR_{1}}_{L^{\infty}}\norm{z}_{H^{3}}^{2}+\frac{\gamma_{1}}{\pi A(t)}\int_{\T}BR_{1}\pa{}z_{2}(\alpha)\pa{3}z_{1}(\alpha)\Lambda(\pa{3}z_{1})(\alpha)d\alpha.
\end{align*}
In the same way,
\begin{align*}
&L_{5}^{2}\le\esth{3}+ L^{21}_{5}
\end{align*}
where
\begin{align*}
&L_{5}^{21}=\frac{\gamma_{1}}{\pi A(t)}\int_{\T}BR_{2}\pa{}z_{2}(\alpha)\pa{3}z_{2}(\alpha)\Lambda(\pa{3}z_{1})(\alpha)d\alpha.
\end{align*}
Using $\pa{}z_{2}\pa{3}z_{2}=-\pa{}z_{1}\pa{3}z_{1}-\abs{\pa{2}z}^{2}$ we separate $L_{5}^{21}$ in
\begin{align*}
&L_{5}^{211}=-\frac{\gamma_{1}}{\pi A(t)}\int_{\T}BR_{2}\abs{\pa{2}z(\alpha)}^{2}\Lambda(\pa{3}z_{1})(\alpha)d\alpha,\\
&L_{5}^{212}=-\frac{\gamma_{1}}{\pi A(t)}\int_{\T}BR_{2}\pa{}z_{1}(\alpha)\pa{3}z_{1}(\alpha)\Lambda(\pa{3}z_{1})(\alpha)d\alpha.
\end{align*}
The fact that $\Lambda=\pa{}H$ allows us to
\begin{align*}
&L_{5}^{211}=\frac{\gamma_{1}}{\pi A(t)}\int_{\T}\pa{}(BR_{2}\abs{\pa{2}z(\alpha)}^{2})H(\pa{3}z_{1})(\alpha)d\alpha\\
&\le C(\norm{\pa{}BR_{2}}_{L^{2}}\norm{z}^{2}_{\mathcal{C}^{2}}+\norm{BR_{2}}_{L^{2}}\norm{z}_{\mathcal{C}^{2}}^{2})\norm{z}_{H^{3}}\norm{\F}_{L^{\infty}}\\
&\le\esth{3}.
\end{align*}
Then we get,
\begin{align*}
&L_{5}^{2}\le\esth{3}\\
&-\frac{\gamma_{1}}{\pi A(t)}\int_{\T}BR_{2}\pa{}z_{1}(\alpha)\pa{3}z_{1}(\alpha)\Lambda(\pa{3}z_{1})(\alpha)d\alpha.
\end{align*}
Now, we add $L_{5}^{1}+L_{5}^{2}$:
\begin{align*}
&L_{5}^{1}+L_{5}^{2}\le\esth{3}\\
&-\frac{\gamma_{1}}{\pi A(t)}\int_{\T}(\brz{z}+\brh{z})\cdot\pa{\bot}z(\alpha)\pa{3}z_{1}(\alpha)\Lambda(\pa{3}z_{1})(\alpha)d\alpha.
\end{align*}
Analogously, using $\pa{}z_{1}\pa{3}z_{1}=-\pa{}z_{2}\pa{3}z_{2}-\abs{\pa{2}z}^{2}$ we get:
\begin{align*}
&L_{5}^{3}+L_{5}^{4}\le\esth{3}\\
&-\frac{\gamma_{1}}{\pi A(t)}\int_{\T}(\brz{z}+\brh{z})\cdot\pa{\bot}z(\alpha)\pa{3}z_{2}(\alpha)\Lambda(\pa{3}z_{2})(\alpha)d\alpha.
\end{align*}
Therefore,
\begin{align*}
&L_{5}\le\esth{3}\\
&-\frac{\gamma_{1}}{\pi A(t)}\int_{\T}(\brz{z}+\brh{z})\cdot\pa{\bot}z(\alpha)\pa{3}z(\alpha)\cdot\Lambda(\pa{3}z)(\alpha)d\alpha.
\end{align*}
In conclusion,
\begin{align*}
&I_{1}^{4}\le\esth{3}\\
&-\frac{1}{\pi A(t)}\int_{\T}\left[\gamma_{1}(\brz{z}+\brh{z})\cdot\pa{\bot}z(\alpha)+N\pa{}z_{1}(\alpha)\right]\pa{3}z(\alpha)\cdot\Lambda(\pa{3}z)(\alpha)d\alpha.
\end{align*}
Since,
\begin{displaymath}
\sigma(\alpha,t)=\frac{\mu_{2}-\mu_{1}}{\kappa_{1}}(\brz{z}+\brh{z})\cdot\pa{\bot}z(\alpha)+(\rho_{2}-\rho_{1})g\pa{}z_{1}(\alpha)
\end{displaymath}
then,
\begin{align*}
&I_{1}\le\esth{3}\\
&-\frac{\kappa_{1}}{2\pi(\mu_{2}+\mu_{1})A(t)}\int_{\T}\sigma(\alpha,t)\pa{3}z(\alpha)\cdot\Lambda(\pa{3}z)(\alpha)d\alpha.
\end{align*}

%%%%%%%%%%%%%%%%%%%%%%%%%%%%%%%%%%%%%%%%%%%%%%%%%%%%%%
\subsection{Estimates on $I_{3}$}
\label{estimacionesc}
To finish all estimation on $z$, we consider:
\begin{align*}
&I_{3}=\int_{\T}\pa{3}z(\alpha)\cdot\pa{4}z(\alpha)c(\alpha)d\alpha+3\int_{\T}\abs{\pa{3}z(\alpha)}^{2}\pa{}c(\alpha)d\alpha\\
&+3\int_{\T}\pa{3}z(\alpha)\cdot\pa{2}z(\alpha)\pa{2}c(\alpha)d\alpha+\int_{\T}\pa{3}z(\alpha)\cdot\pa{}z(\alpha)\pa{3}c(\alpha)d\alpha\\
&I_{3}^{1}+I_{3}^{2}+I_{3}^{3}+I_{3}^{4}.
\end{align*}
Integrating by parts and using the definition of $c(\alpha)$,
\begin{align*}
&I_{3}^{1}=-\frac{1}{2}\int_{\T}\abs{\pa{3}z(\alpha)}^{2}\pa{}c(\alpha)d\alpha\le C\norm{\pa{}c}_{L^{\infty}}\norm{\pa{3}z}^{2}_{L^{2}}\\
&\le 2C\norm{\F}^{\frac{1}{2}}_{L^{\infty}}(\norm{\pa{}\brz{z}}_{L^{\infty}}+\norm{\pa{}\brh{z}}_{L^{\infty}})\norm{\pa{3}z}^{2}_{L^{2}}.
\end{align*}
Since $I_{3}^{2}=-6I_{3}^{1}$ we have $I_{3}^{2}$ controlled.
Computing $\pa{2}c$,
\begin{align*}
&\pa{2}c(\alpha)=-\frac{\pa{2}z(\alpha)}{A(t)}\cdot(\pa{}\brz{z}+\pa{}\brh{z})\\
&-\frac{\pa{}z(\alpha)}{A(t)}\cdot(\pa{2}\brz{z}+\pa{2}\brh{z}).
\end{align*} 
Thus,
\begin{align*}
&I_{3}^{3}=-3\int_{\T}\pa{3}z(\alpha)\cdot\pa{2}z(\alpha)\frac{\pa{2}z(\alpha)}{A(t)}\cdot(\pa{}\brz{z}+\pa{}\brh{z})d\alpha\\
&-3\int_{\T}\pa{3}z(\alpha)\cdot\pa{2}z(\alpha)\frac{\pa{}z(\alpha)}{A(t)}\cdot(\pa{2}\brz{z}+\pa{2}\brh{z})d\alpha\\
&\equiv I_{3}^{31}+I_{3}^{32}
\end{align*}
where
\begin{align*}
&I_{3}^{31}\le C\norm{\F}_{L^{\infty}}\norm{z}_{\mathcal{C}^{2}}^{2}(\norm{\pa{}\brz{z}}_{L^{\infty}}+\norm{\pa{}\brh{z}}_{L^{\infty}})\norm{\pa{3}z}_{L^{2}},\\
&I_{3}^{32}\le C\norm{\F}^{\frac{1}{2}}_{L^{\infty}}\norm{z}_{\mathcal{C}^{2}}(\norm{\pa{2}\brz{z}}_{L^{2}}+\norm{\pa{2}\brh{z}}_{L^{2}})\norm{\pa{3}z}_{L^{2}}.\\
\end{align*}
Using the estimation on $\norm{\brz{z}}_{H^{k}}+\norm{\brh{z}}_{H^{k}}$ we obtain,
\begin{displaymath}
I_{3}^{3}\le\esth{3}.
\end{displaymath}
Since $\abs{\pa{}z(\alpha)}^{2}=A(t)$, if we differentiate respect to $\alpha$:
\begin{align*}
0=2\abs{\pa{2}z(\alpha)}^{2}+2\pa{}z(\alpha)\cdot\pa{3}z(\alpha)\Rightarrow \pa{}z(\alpha)\cdot\pa{3}z(\alpha)=-\abs{\pa{2}z(\alpha)}^{2}.
\end{align*}
Then, integrating by parts
\begin{align*}
I_{3}^{4}=-\int_{\T}\abs{\pa{2}z(\alpha)}^{2}\pa{3}c(\alpha)d\alpha=2\int_{\T}\pa{2}z(\alpha)\cdot\pa{3}z(\alpha)\pa{2}c(\alpha)d\alpha=\frac{2}{3}I_{3}^{3}.
\end{align*}
Therefore,
\begin{displaymath}
I_{3}^{4}\le\esth{3}.
\end{displaymath}

Putting together all above estimates, since the case for $k>3$ is straighforward we have
\begin{align*}
&\frac{d}{dt}\norm{z}_{H^{k}}^{2}\le\esth{k}\\
&-\frac{\kappa^{1}}{2\pi(\mu^{1})+\mu^{2}}\int_{\T}\frac{\sigma(\alpha)}{A(t)}\pa{k}z(\alpha)\cdot\Lambda(\pa{k}z)(\alpha)d\alpha
\end{align*} 
for $k\ge 3$.

\section{Evolution of the distance between $z$ and $h$}
\label{evoldi}
Remind that we relate the distance of the curve $z$ with $h$ through the function
\begin{displaymath}
d(z,h)=\frac{1}{\abs{z(\alpha)-h(\alpha-\beta)}^{2}}
\end{displaymath}
\begin{lem}
The following estimate holds
\begin{align*}
\frac{d}{dt}\norm{d(z,h)}_{L^{\infty}}^{2}\le\esth{3}.
\end{align*}
\end{lem}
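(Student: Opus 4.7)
The plan is to differentiate the $L^\infty$ norm in time by evaluating at a maximizing point and to reduce everything to an $L^\infty$ bound on $z_t$, which in turn uses the Birkhoff--Rott estimates of the previous section.

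First, I would set $D(\alpha,\beta,t)=|z(\alpha,t)-h(\beta)|^{-2}$. Pointwise differentiation (recall that $h$ is fixed in time) gives
\begin{equation*}
\partial_t D(\alpha,\beta,t)=-2\,\frac{(z(\alpha,t)-h(\beta))\cdot z_t(\alpha,t)}{|z(\alpha,t)-h(\beta)|^{4}},
\end{equation*}
so $|\partial_t D(\alpha,\beta,t)|\le 2\,\|z_t\|_{L^\infty}\,D(\alpha,\beta,t)^{3/2}$. Since $z\in\mathcal{C}^{1}([0,T],H^{k})$ with $k\ge 3$ implies that $D$ is continuous on the compact domain and $\mathcal{C}^1$ in time, the standard argument (Danskin) for differentiating the supremum yields, at a maximizing pair $(\alpha^{*},\beta^{*})$,
\begin{equation*}
\frac{d}{dt}\norm{\di}^{2}_{L^\infty}\le 4\,\norm{\di}_{L^\infty}^{5/2}\,\|z_t\|_{L^\infty}.
\end{equation*}
Any polynomial in $\norm{\di}_{L^\infty}$ is absorbed into $\esth{3}$, so it suffices to show $\|z_t\|_{L^\infty}\le\esth{3}$.

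The key step is then to control $z_t=\brz{z}+\brh{z}+c(\alpha)\pa{}z$ in $L^\infty$. By Sobolev embedding and the estimate (\ref{brzh}) with $k=2$,
\begin{equation*}
\norm{\brz{z}}_{L^\infty}+\norm{\brh{z}}_{L^\infty}\le C\bigl(\norm{\brz{z}}_{H^{2}}+\norm{\brh{z}}_{H^{2}}\bigr)\le\esth{3}.
\end{equation*}
From the definition of $c$ and Cauchy--Schwarz,
\begin{equation*}
\norm{c}_{L^\infty}\le \frac{C}{A(t)^{1/2}}\bigl(\norm{\pa{}\brz{z}}_{L^{2}}+\norm{\pa{}\brh{z}}_{L^{2}}\bigr)\le\esth{3},
\end{equation*}
again by (\ref{brzh}) (noting $A(t)^{-1/2}\le C\norm{\F}_{L^\infty}^{1/2}$), and $\norm{\pa{}z}_{L^\infty}\le C\norm{z}_{H^{2}}$. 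Combining these gives $\|z_t\|_{L^\infty}\le\esth{3}$, which finishes the proof.

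The only delicate point is the differentiation of the supremum norm; this is routine under the regularity of $z$ already available, and the rest is a direct application of the bounds established in Sections \ref{estiamplitud} and \ref{estibr}. Thus no new analytical obstacle appears here, and the estimate follows cleanly once those previous lemmas are in hand.
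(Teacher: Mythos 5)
Your argument is correct, but it follows a different route from the paper. The paper never differentiates the supremum directly: it computes $\frac{d}{dt}\norm{d(z,h)}_{L^{p}}^{p}$ for finite $p$, splits $(z(\alpha)-h(\alpha-\beta))\cdot z_{t}(\alpha)$ into the four terms $J_{1},\dots,J_{4}$ coming from $z_t=\brz{z}+\brh{z}+c\,\pa{}z$, bounds each by $\esth{3}\norm{d(z,h)}_{L^{p}}^{p}$ using the $L^{2}$ bounds on the Birkhoff--Rott integrals and on $c$, integrates in time, lets $p\to\infty$, and only then recovers the $L^{\infty}$ differential inequality through a difference quotient. You instead differentiate $D(\alpha,\beta,t)=\abs{z(\alpha,t)-h(\beta)}^{-2}$ pointwise, use $\abs{\partial_{t}D}\le 2\norm{z_t}_{L^{\infty}}D^{3/2}$, and invoke a Danskin-type differentiation of the sup at a maximizer, reducing everything to $\norm{z_t}_{L^{\infty}}\le\esth{3}$, which you get from Sobolev embedding, the estimate (\ref{brzh}) with $k=2$, and the Cauchy--Schwarz bound on $c$ (with $A(t)^{-1/2}\le\norm{\F}_{L^{\infty}}^{1/2}$); the extra powers of $\norm{\di}_{L^{\infty}}$ are indeed absorbed into the exponential. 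Your route is shorter and avoids the $J_{i}$ bookkeeping, at the small price of justifying the a.e.\ differentiability of the supremum (you should note that the sup is attained thanks to periodicity and the decay of $D$ as $\abs{\alpha-\beta}\to\infty$, and that one really controls an upper Dini derivative, which suffices here and is the same device the paper uses for $m(t)=\min_{\alpha}\sigma(\alpha,t)$); the paper's $L^{p}\to L^{\infty}$ limit sidesteps that regularity issue at the cost of a longer computation. Both proofs ultimately rest on the same ingredients from Sections \ref{estiamplitud} and \ref{estibr}, so your proposal is a valid alternative proof of the lemma.
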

\begin{proof}
If we take $p>1$, we get
\begin{align*}
&\frac{d}{dt}\norm{d(z,h)}_{L^{p}}^{p}(t)=\frac{d}{dt}\int_{\T}\int_{\T}\frac{1}{\abs{z(\alpha)-h(\alpha-\beta)}^{2p}}d\alpha d\beta\\
&=-2p\int_{\T}\int_{\T}\frac{(z(\alpha)-h(\alpha-\beta))\cdot z_{t}(\alpha)}{\abs{z(\alpha)-h(\alpha-\beta)}^{2p+2}}d\alpha d\beta\\
&=-2p\int_{\T}\int_{\T}\frac{z(\alpha)\cdot z_{t}(\alpha)}{\abs{z(\alpha)-h(\alpha-\beta)}^{2p+2}}d\alpha d\beta+2p\int_{\T}\int_{\T}\frac{h(\alpha-\beta)\cdot z_{t}(\alpha)}{\abs{z(\alpha)-h(\alpha-\beta)}^{2p+2}}d\alpha d\beta\\
&=-2p\int_{\T}\int_{\T}\frac{z(\alpha)\cdot(\brz{z}+\brh{z})}{\abs{z(\alpha)-h(\alpha-\beta)}^{2p+2}}d\alpha d\beta\\
&-2p\int_{\T}\int_{\T}\frac{z(\alpha\cdot\pa{}z(\alpha)c(\alpha)}{\abs{z(\alpha)-h(\alpha-\beta)}^{2p+2}}d\alpha d\beta+2p\int_{\T}\int_{\T}\frac{h(\alpha-\beta)\cdot(\brz{z}+\brh{z})}{\abs{z(\alpha)-h(\alpha-\beta)}^{2p+2}}d\alpha d\beta\\
&+2p\int_{\T}\int_{\T}\frac{h(\alpha-\beta)\cdot \pa{}z(\alpha)c(\alpha)}{\abs{z(\alpha)-h(\alpha-\beta)}^{2p+2}}d\alpha d\beta\equiv J_{1}+J_{2}+J_{3}+J_{4}.
\end{align*}
It is easy to see that
\begin{align*}
&J_{1}\le C\norm{d(z,h)}_{L^{\infty}}\norm{z}_{L^{2}}\norm{\brz{z}+\brh{z}}_{L^{2}}\int_{\T}\int_{\T}\frac{1}{\abs{z(\alpha)-h(\alpha-\beta)}^{2p}}d\alpha d\beta\\
&\le\esth{3}\norm{d(z,h)}_{L^{p}}^{p},\\
&J_{3}\le C\norm{d(z,h)}_{L^{\infty}}\norm{h}_{L^{2}}\norm{\brz{z}+\brh{z}}_{L^{2}}\int_{\T}\int_{\T}\frac{1}{\abs{z(\alpha)-h(\alpha-\beta)}^{2p}}d\alpha d\beta\\
&\le\esth{3}\norm{d(z,h)}_{L^{p}}^{p},\\
&J_{2}\le C\norm{d(z,h)}_{L^{\infty}}\norm{c}_{L^{\infty}}\norm{z}_{L^{\infty}}\norm{z}_{\mathcal{C}^{1}}\int_{\T}\int_{\T}\frac{1}{\abs{z(\alpha)-h(\alpha-\beta)}^{2p}}d\alpha d\beta\\
&\le\esth{3}\norm{d(z,h)}_{L^{p}}^{p},
\end{align*}
and
\begin{align*}
&J_{4}\le C\norm{d(z,h)}_{L^{\infty}}\norm{c}_{L^{\infty}}\norm{h}_{L^{\infty}}\norm{z}_{\mathcal{C}^{1}}\int_{\T}\int_{\T}\frac{1}{\abs{z(\alpha)-h(\alpha-\beta)}^{2p}}d\alpha d\beta\\
&\le\esth{3}\norm{d(z,h)}_{L^{p}}^{p}.
\end{align*}
Therefore,
\begin{align*}
\frac{d}{dt}\norm{d(z,h)}_{L^{p}}^{p}\le\exp C\nor{z,h}^{2}\norm{d(z,h)}_{L^{p}}^{p}.
\end{align*}
Let integrate on $t$,
\begin{align*}
&\norm{d(z,h)}_{L^{p}}(t+h)\le\norm{d(z,h)}_{L^{p}}(t)\exp(\int_{t}^{t+h}\exp C\nor{z,h}^{2}(s)ds).
\end{align*}
If we take $p\to\infty$ we get
\begin{align*}
\norm{d(z,h)}_{L^{\infty}}(t+h)\le\norm{d(z,h)}_{L^{\infty}}(t)\exp(\int_{t}^{t+h}\exp C\nor{z,h}^{2}(s)ds),
\end{align*}
then
\begin{align*}
&\frac{d}{dt}\norm{d(z,h)}_{L^{\infty}}(t)=\lim_{h\to 0}(\frac{\norm{d(z,h)}_{L^{\infty}}(t+h)-\norm{d(z,h)}_{L^{\infty}}(t)}{h})\\
&\le\norm{d(z,h)}_{L^{\infty}}(t)\lim_{h\to 0}(\frac{\exp\int_{t}^{t+h}\exp\nor{z,h}^{2}(s)ds-1}{h})\\
&\le\norm{d(z,h)}_{L^{\infty}}(t)\exp\nor{z,h}^{2}(t).
\end{align*}
\end{proof}

\
%%%%%%%%%%%%%%%%%%%%%%%%%%%%%%%%%%%%%%%%%%%%%%%%%%%%%%%
\section{Evolution of the minimum of $\sigma(\alpha,t)$}
\label{evolsig}
We know that 
\begin{displaymath}
\sigma(\alpha,t)=\frac{\mu^{2}-\mu^{1}}{\kappa^{1}}(\brz{z}+\brh{z})\cdot\pa{\bot}z(\alpha)+(\rho^{2}-\rho^{1})g\pa{}z_{1}(\alpha).
\end{displaymath}
\begin{lem}
Let $z(\alpha,t)$ be a solution of the system with $z(\alpha,t)\in\mathcal{C}^{1}(\left[0,T\right];H^{3})$ and $m(t)=\min_{\alpha\in\T}\sigma(\alpha,t)$.
Then
\begin{displaymath}
m(t)\ge m(0)-\int_{0}^{t}\exp C\nor{z,h}^{2}(s)ds.
\end{displaymath}
\end{lem}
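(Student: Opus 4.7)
The plan is to show that $m(t)$ is absolutely continuous in $t$ and that, wherever it is differentiable, $m'(t) \geq -\exp C\nor{z,h}^{2}(t)$; integrating this inequality on $[0,t]$ then yields the stated bound. Since $\sigma \in \mathcal{C}^{1}([0,T];\mathcal{C}(\T))$ (as $z \in \mathcal{C}^{1}([0,T];H^{3})$ and the Birkhoff-Rott terms inherit this regularity from Section \ref{estibr}), the function $m(t) = \min_{\alpha}\sigma(\alpha,t)$ is Lipschitz in $t$ and, by Rademacher, differentiable a.e.; at any such point, picking $\alpha_{t}$ with $\sigma(\alpha_{t},t)=m(t)$, one has $m'(t) = \partial_{t}\sigma(\alpha_{t},t)$. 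Hence it suffices to establish the pointwise bound
\begin{equation*}
\norm{\partial_{t}\sigma}_{L^{\infty}} \le \exp C\nor{z,h}^{2}.
\end{equation*}

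Next I would compute $\partial_{t}\sigma$ by differentiating in time the defining expression
\begin{equation*}
\sigma(\alpha,t)=\tfrac{\mu^{2}-\mu^{1}}{\kappa^{1}}(\brz{z}+\brh{z})\cdot\pa{\bot}z(\alpha)+(\rho^{2}-\rho^{1})g\pa{}z_{1}(\alpha),
\end{equation*}
getting three groups of terms: (i) $\partial_{t}(\brz{z}+\brh{z})\cdot\pa{\bot}z$, (ii) $(\brz{z}+\brh{z})\cdot\pa{\bot}z_{t}$, and (iii) $(\rho^{2}-\rho^{1})g\,\partial_{\alpha}(z_{1})_{t}$. For (ii) and (iii), I would substitute $z_{t}=\brz{z}+\brh{z}+c\,\pa{}z$, differentiate in $\alpha$, and use the $L^{\infty}$ control of the BR integrals and $c$, together with the estimates from Sections \ref{estiamplitud} and \ref{estibr}, to bound each contribution by $\exp C\nor{z,h}^{2}$.

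The main obstacle is (i), namely estimating $\partial_{t}\brz{z}$ and $\partial_{t}\brh{z}$ in $L^{\infty}$. For these, I would differentiate under the $PV$ integral, splitting into the contribution from $\partial_{t}\varpi_{i}$ (keeping the kernel) and the contribution from $\partial_{t}$ acting on the positions $z(\alpha,t)$ inside the kernel. The second kind is handled by the same singularity-extraction techniques used in Subsection \ref{estimini7} (adding and subtracting $\pa{\bot}z(\alpha)/[\beta\abs{\pa{}z(\alpha)}^{2}]$ and exploiting $\pa{}z\cdot\pa{\bot}z=0$), together with $z_{t}$'s regularity, to produce at worst a Hilbert transform of a Sobolev function and hence an $L^{\infty}$ bound in terms of $\nor{z,h}^{2}$. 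For the first kind, I would obtain $\partial_{t}\varpi$ by differentiating the relation $\varpi=(I+M\OT)^{-1}v$ (Section \ref{estiamplitud}), giving
\begin{equation*}
\partial_{t}\varpi = (I+M\OT)^{-1}\bigl(\partial_{t}v - M(\partial_{t}\OT)\varpi\bigr),
\end{equation*}
where $\partial_{t}\OT$ denotes $\OT$ with its kernel differentiated in $t$; then Proposition \ref{invertoperhunmedio} bounds the inverse in $H^{1/2}$, and the remaining pieces are bounded by the calculations analogous to Lemmas \ref{tl2h1} and \ref{thkhk1} once $z_{t}$ is identified with a combination of the BR integrals and the tangential term $c\,\pa{}z$. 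After Sobolev embedding this yields the required $L^{\infty}$ control on $\partial_{t}(\brz{z}+\brh{z})$.

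Combining (i), (ii), and (iii) gives $\partial_{t}\sigma(\alpha,t)\ge -\exp C\nor{z,h}^{2}(t)$ uniformly in $\alpha$, whence $m'(t)\ge -\exp C\nor{z,h}^{2}(t)$ a.e. Integration on $[0,t]$ completes the proof.
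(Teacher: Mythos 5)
Your plan follows essentially the same route as the paper: the same splitting of $\partial_{t}\sigma$ into the $\partial_{t}$(Birkhoff--Rott) part and the $z_{t}$ parts, the same singularity extraction for the kernel terms, and the same key step of differentiating the vorticity relation in time to get $\varpi_{t}=(I+M\OT)^{-1}\bigl(\partial_{t}v-M(\partial_{t}\OT)\varpi\bigr)$ and then invoking the $H^{1/2}$ bound of Proposition \ref{invertoperhunmedio} (the paper writes this as $\varpi_{t}+M\OT\varpi_{t}=-M\R\varpi-(N\partial_{t}\pa{}z_{2},0)^{T}$ and estimates $\norm{\R\varpi}_{H^{1}}$, plus a $\mathcal{C}^{\delta}$ bound on $\partial_{t}\varpi_{1}$ for the most singular term, which your Sobolev-embedding remark covers). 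The proposal is correct and matches the paper's argument, with only the standard a.e.-differentiability justification for $m(t)$ added.
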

Recall that
\begin{align*}
&\exp C\nor{z,h}^{2}=\esth{3}.
\end{align*}
\begin{proof}
We consider $\alpha_{t}\in\T$ such that
\begin{displaymath}
m(t)=\min_{\alpha\in\T}\sigma(\alpha,t)=\sigma(\alpha_{t},t).
\end{displaymath}
We may calculate the derivate of $m(t)$, to obtain
\begin{displaymath}
m'(t)=\sigma_{t}(\alpha_{t},t).
\end{displaymath}
Using the definition,
\begin{align*}
&\sigma_{t}(\alpha,t)=\frac{\mu^{2}-\mu^{1}}{\kappa^{1}}(\partial_{t}\brz{z}+\partial_{t}\brh{z})\cdot\pa{\bot}z(\alpha)\\
&+(\frac{\mu^{2}-\mu^{1}}{\kappa^{1}}(\brz{z}+\brh{z})\cdot\pa{\bot}z_{t}(\alpha)+(\rho^{2}-\rho^{1})g\pa{}\partial_{t}z_{1}(\alpha))\\
&\equiv I_{1}+I_{2}.
\end{align*}
We have,
\begin{align*}
&\abs{I_{2}}\le C(\norm{\brz{z}+\brh{z}}_{L^{\infty}}+1)\norm{\pa{}z_{t}}_{L^{\infty}}\\
&\le\exp C\nor{z,h}^{2}\norm{\pa{}z_{t}}_{L^{\infty}}.
\end{align*}
Using the equation of $z_{t}$ we can calculate the estimations of $\norm{\pa{}z_{t}}_{L^{\infty}}$. We have,
\begin{align}
\label{paz}
&\norm{\pa{}z_{t}}_{L^{\infty}}\le\norm{\brz{z}}_{L^{\infty}}+\norm{\brh{z}}_{L^{\infty}}+\norm{\pa{}c}_{L^{\infty}}\norm{\pa{}z}_{L^{\infty}}\\
&+\norm{c}_{L^{\infty}}\norm{\pa{2}z}_{L^{\infty}}\le\exp C\nor{z,h}^{2}\nonumber
\end{align}
then we obtain
\begin{displaymath}
\abs{I_{2}}\le\exp C\nor{z,h}^{2}.
\end{displaymath}
Let us write $\partial_{t}\brz{z}=B_{1}+B_{2}+B_{3}$ where
\begin{align*}
&B_{1}=\frac{1}{2\pi}PV\int_{\R}\frac{(\Delta z)^{\bot}}{\abs{\Delta z}^{2}}\partial_{t}\varpi_{1}(\alpha-\beta)d\alpha d\beta,\\
&B_{2}=\frac{1}{2\pi}PV\int_{\R}\frac{(\Delta z_{t})^{\bot}}{\abs{\Delta z}^{2}}\varpi_{1}(\alpha-\beta)d\alpha d\beta,\\
&B_{3}=-\frac{1}{\pi}PV\int_{\R}\frac{(\Delta z)^{\bot}\Delta z\cdot\Delta z_{t}}{\abs{\Delta z}^{4}}\varpi_{1}(\alpha-\beta)d\alpha d\beta.\\
\end{align*}
We split $B_{1}$ in the following way,
\begin{align*}
&B_{1}=\frac{1}{2\pi}PV\int_{\R}(\frac{(\Delta z)^{\bot}}{\abs{\Delta z}^{2}}-\frac{\pa{\bot}z(\alpha)}{\beta\abs{\pa{}z(\alpha)}^{2}})\partial_{t}\varpi_{1}(\alpha-\beta)d\alpha d\beta+\frac{\pa{\bot}z(\alpha)}{\abs{\pa{}z(\alpha)}^{2}}H(\partial_{t}\varpi_{1})(\alpha).
\end{align*}
Then,
\begin{align*}
&\abs{B_{1}}\le C\norm{\F}_{L^{\infty}}\norm{z}_{\mathcal{C}^{2}}\norm{\partial_{t}\varpi_{1}}_{L^{2}}+\norm{\F}^{\frac{1}{2}}_{L^{\infty}}\norm{\partial_{t}\varpi_{1}}_{\mathcal{C}^{\delta}}.
\end{align*}
For estimate $B_{2}$ we separate
\begin{align*}
&B_{2}=\frac{1}{2\pi}PV\int_{\R}(\Delta z_{t})^{\bot}(\frac{1}{\abs{\Delta z}^{2}}-\frac{1}{\beta^{2}\abs{\pa{}z(\alpha)}^{2}})\varpi_{1}(\alpha-\beta)d\alpha d\beta\\
&+\frac{1}{2\pi}PV\int_{\R}\frac{(\Delta z_{t})^{\bot}}{\beta^{2}\abs{\pa{}z(\alpha)}^{2}}\varpi_{1}(\alpha-\beta)d\alpha d\beta\equiv B_{2}^{1}+B_{2}^{2}.
\end{align*}
Since,
\begin{align*}
&z_{t}(\alpha)-z_{t}(\alpha-\beta)=((\brz{z}(\alpha)-\brz{z}(\alpha-\beta))\\
&+(\brh{z}(\alpha)-\brh{z}(\alpha-\beta)))+ (c(\alpha)-c(\alpha-\beta))\pa{}z(\alpha-\beta)\\
&+c(\alpha-\beta)(\pa{}z(\alpha)-\pa{}z(\alpha-\beta))\equiv J_{1}+J_{2}+J_{3}
\end{align*}
we have,
\begin{align}
\label{j1}
&J_{1}=\beta\int_{0}^{1}\pa{}\brz{z}(\alpha-\beta+t\beta)+\pa{}\brh{z}(\alpha-\beta+t\beta)dt\\\nonumber
&\le\abs{\beta}(\norm{\pa{}\brz{z}}_{L^{\infty}}+\norm{\pa{}\brh{z}}_{L^{\infty}}),
\end{align}
\begin{align}
\label{j2}
&J_{2}\le\frac{\abs{\beta}}{A^{\frac{1}{2}}(t)}(\norm{\pa{}\brz{z}}_{L^{\infty}}+\norm{\pa{}\brh{z}}_{L^{\infty}})
\end{align}
and
\begin{align}
\label{j3}
&J_{3}=c(\alpha-\beta)\beta\int_{0}^{1}\pa{2}z(\alpha-\beta+t\beta)dt\le\norm{c}_{L^{\infty}}\abs{\beta}\norm{z}_{\mathcal{C}^{2}}.
\end{align}
Computing $\frac{1}{\abs{\Delta z}^{2}}-\frac{1}{\beta^{2}\abs{\pa{}z(\alpha)}^{2}}$ and using (\ref{j1}), (\ref{j2}) and (\ref{j3}), we get
\begin{displaymath}
\abs{B_{2}^{1}}\le C\exp C\nor{z,h}^{2}\norm{\F}^{\frac{3}{2}}_{L^{\infty}}\norm{z}_{H^{2}}\norm{\varpi_{1}}_{L^{2}}.
\end{displaymath}
Since,
\begin{align*}
\Delta z_{t}=\beta^{2}\int_{0}^{1}\int_{0}^{1}\pa{2}z(\alpha-\beta s+\beta ts)(1-t)dsdt+\beta\pa{}z_{t}(\alpha)
\end{align*}
then
\begin{align*}
&B_{2}^{2}\le C\norm{\F}_{L^{\infty}}\norm{\pa{2}z_{t}}_{L^{2}}\norm{\varpi_{1}}_{L^{2}}+C\norm{\F}_{L^{\infty}}\norm{\pa{}z_{t}}_{L^{\infty}}\norm{H(\varpi_{1})}_{L^{2}}.
\end{align*}
Using (\ref{paz}) and
\begin{align}
\label{pa2z}
&\norm{\pa{2}z_{t}}_{L^{2}}\le\norm{\pa{2}\brz{z}+\pa{2}\brh{z}}_{L^{2}}+\norm{\pa{2}c}_{L^{2}}\norm{\pa{}z}_{L^{\infty}}\\\nonumber
&+\norm{\pa{}c}_{L^{\infty}}\norm{\pa{2}z}_{L^{2}}+\norm{c}_{L^{\infty}}\norm{\pa{3}z}_{L^{2}},
\end{align}
since
\begin{align*}
\pa{2}c(\alpha)=&-\frac{\pa{2}z(\alpha)}{A(t)}(\pa{}\brz{z}+\pa{}\brh{z})\\
&-\frac{\pa{}z(\alpha)}{A(t)}(\pa{2}\brz{z}+\pa{2}\brh{z}),
\end{align*} 
we get
\begin{align*}
B_{2}^{2}\le\exp C\nor{z,h}^{2}.
\end{align*}
Using the same proceeding, we have $B_{3}\le\exp C\nor{z,h}^{2}$.

On the other hand, we split $\partial_{t}\brh{z}=C_{1}+C_{2}+C_{3}$ where,
\begin{align*}
&C_{1}=\frac{1}{2\pi}PV\int_{\R}\frac{(\Delta zh)^{\bot}}{\abs{\Delta zh}^{2}}\partial_{t}\varpi_{2}(\alpha-\beta)d\alpha d\beta,\\
&C_{2}=\frac{1}{2\pi}PV\int_{\R}\frac{\partial_{t}^{\bot}z(\alpha)}{\abs{\Delta zh}^{2}}\varpi_{2}(\alpha-\beta)d\alpha d\beta,\\
&C_{3}=-\frac{1}{\pi}PV\int_{\R}\frac{(\Delta zh)^{\bot}\Delta zh\cdot\partial_{t}z(\alpha)}{\abs{\Delta z}^{4}}\varpi_{2}(\alpha-\beta)d\alpha d\beta.\\
\end{align*}
Thus we have
\begin{align*}
&C_{1}\le C\norm{d(z,h)}_{L^{\infty}}^{\frac{1}{2}}\norm{\partial_{t}\varpi_{2}}_{L^{2}},\\
&C_{2}\le C\norm{d(z,h)}_{L^{\infty}}\norm{\partial_{t}z}_{L^{\infty}}\norm{\varpi_{2}}_{L^{2}}\le\exp C\nor{z,h}^{2},\\
&C_{3}\le C\norm{d(z,h)}_{L^{\infty}}\norm{\pa{}z_{t}}_{L^{\infty}}\norm{\varpi_{2}}_{L^{2}}\le\exp C\nor{z,h}^{2}.
\end{align*}
We only need to know what happen with $\norm{\partial_{t}\varpi_{1}}_{L^{2}}$, $\norm{\partial_{t}\varpi_{2}}_{L^{2}}$ and $\norm{\varpi_{1}}_{\mathcal{C}^{\delta}}$.
Using the definitions of $\partial_{t}\varpi_{1}$ and $\partial_{t}\varpi_{2}$ we can see that
\begin{align*}
\varpi_{t}+M\OT(\varpi_{t})=-M\R\varpi-\left(\begin{matrix}
N\partial_{t}\pa{}z_{2}(\alpha)\\
0
\end{matrix}\right)
\end{align*}
where
\begin{displaymath}
\R=\left(\begin{matrix}
R_{1}&R_{2}\\
R_{3}&0
\end{matrix}\right)
\end{displaymath}
with
\begin{align*}
&R_{1}(\varpi_{1})=\frac{1}{\pi}PV\int_{\R}\partial_{t}(\frac{(z(\alpha)-z(\alpha-\beta))^{\bot}\cdot\pa{}z(\alpha)}{\abs{z(\alpha)-z(\alpha-\beta)}^{2}})\varpi_{1}(\alpha-\beta)d\beta,\\
&R_{2}(\varpi_{2})=\frac{1}{\pi}PV\int_{\R}\partial_{t}(\frac{(z(\alpha)-h(\alpha-\beta))^{\bot}\cdot\pa{}z(\alpha)}{\abs{z(\alpha)-h(\alpha-\beta)}^{2}})\varpi_{2}(\alpha-\beta)d\beta,\\
&R_{3}(\varpi_{1})=\frac{1}{\pi}PV\int_{\R}\partial_{t}(\frac{(h(\alpha)-z(\alpha-\beta))^{\bot}\cdot\pa{}z(\alpha)}{\abs{h(\alpha)-z(\alpha-\beta)}^{2}})\varpi_{1}(\alpha-\beta)d\beta.\\
\end{align*}
Then,
\begin{displaymath}
\norm{\varpi_{t}}_{H^{\frac{1}{2}}}\le\norm{(I+M\OT)^{-1}}_{H^{\frac{1}{2}}}(\norm{\R\varpi}_{H^{\frac{1}{2}}}+\norm{\pa{}z_{t}}_{H^{\frac{1}{2}}}).
\end{displaymath}
Therefore, it is clear that in order to control $\norm{\varpi_{t}}_{L^{2}}$ we only need to estimate $\norm{\R\varpi}_{H^{\frac{1}{2}}}$. To do that, let us estimate $\norm{\R\varpi}_{H^{1}}$:

Let separates $R_{1}(\varpi_{1})=S_{1}+S_{2}+S_{3}$ where
\begin{align*}
&S_{1}=\frac{1}{\pi}PV\int_{\R}\frac{(z_{t}(\alpha)-z_{t}(\alpha-\beta))^{\bot}\cdot\pa{}z(\alpha)}{\abs{z(\alpha)-z(\alpha-\beta)}^{2}}\varpi_{1}(\alpha-\beta)d\beta,\\
&S_{2}=\frac{1}{\pi}PV\int_{\R}\frac{(z(\alpha)-z(\alpha-\beta))^{\bot}\cdot\pa{}z_{t}(\alpha)}{\abs{z(\alpha)-z(\alpha-\beta)}^{2}}\varpi_{1}(\alpha-\beta)d\beta,\\
&S_{3}=-\frac{2}{\pi}PV\int_{\R}\frac{(\Delta z)^{\bot}\cdot\pa{}z(\alpha)\Delta z\cdot\Delta z_{t}}{\abs{z(\alpha)-z(\alpha-\beta)}^{4}}\varpi_{1}(\alpha-\beta)d\beta.\\
\end{align*}
We will estimate $\pa{}S_{1}$, the other terms $\pa{}S_{2}$ and $\pa{}S_{3}$ are estimated with the same procedure.
\begin{align*}
&\pa{}S_{1}=\frac{1}{\pi}PV\int_{\R}\frac{(\pa{}z_{t}(\alpha)-\pa{}z_{t}(\alpha-\beta))^{\bot}\cdot\pa{}z(\alpha)}{\abs{z(\alpha)-z(\alpha-\beta)}^{2}}\varpi_{1}(\alpha-\beta)d\beta\\
&+\frac{1}{\pi}PV\int_{\R}\frac{(z_{t}(\alpha)-z_{t}(\alpha-\beta))^{\bot}\cdot\pa{}z(\alpha)}{\abs{z(\alpha)-z(\alpha-\beta)}^{2}}\pa{}\varpi_{1}(\alpha-\beta)d\beta\\
&+\frac{1}{\pi}PV\int_{\R}\frac{(z_{t}(\alpha)-z_{t}(\alpha-\beta))^{\bot}\cdot\pa{2}z(\alpha)}{\abs{z(\alpha)-z(\alpha-\beta)}^{2}}\varpi_{1}(\alpha-\beta)d\beta\\
&-\frac{2}{\pi}PV\int_{\R}\frac{(\Delta z_{t})^{\bot}\cdot\pa{}z(\alpha)\Delta z\cdot\Delta\pa{}z}{\abs{z(\alpha)-z(\alpha-\beta)}^{4}}\varpi_{1}(\alpha-\beta)d\beta\\
&\equiv S_{1}^{1}+S_{1}^{2}+S_{1}^{3}+S_{1}^{4}.
\end{align*}
As we could see in the evolution of the arc-chord condition, using the definitions (\ref{j1}), (\ref{j2}) and (\ref{j3}), we can write $\Delta z_{t}=J_{1}+J_{2}+J_{3}$.

For $S_{1}^{1}$, we split
\begin{align*}
&S_{1}^{1}=\frac{1}{\pi}PV\int_{\R}\frac{(\pa{}z_{t}(\alpha)-\pa{}z_{t}(\alpha-\beta))^{\bot}\cdot\pa{}z(\alpha)}{\abs{z(\alpha)-z(\alpha-\beta)}^{2}}(\varpi_{1}(\alpha-\beta)-\varpi_{1}(\alpha))d\beta\\
&+\frac{1}{\pi}PV\int_{\R}\frac{(\pa{}z_{t}(\alpha)-\pa{}z_{t}(\alpha-\beta))^{\bot}\cdot\pa{}z(\alpha)}{\abs{z(\alpha)-z(\alpha-\beta)}^{2}}\varpi_{1}(\alpha)d\beta\\
&\equiv S_{1}^{11}+S_{1}^{12}.
\end{align*}
Since $\varpi_{1}(\alpha-\beta)-\varpi_{1}(\alpha)=\beta\int_{0}^{1}\pa{}\varpi_{1}(\alpha-\beta t)dt$ and $\pa{}z_{t}(\alpha)-\pa{}z_{t}(\alpha-\beta)=\beta\int_{0}^{1}\pa{2}z_{t}(\alpha+\beta-\beta t)dt$ and we have seen (\ref{pa2z}) then we have controlled $S_{1}^{11}$.

For $S_{1}^{12}$, computing 
\begin{displaymath}
B(\alpha,\beta)=\frac{1}{\abs{\Delta z}^{2}}-\frac{1}{\beta^{2}\abs{\pa{}z(\alpha)}^{2}}=\frac{\beta\int_{0}^{1}\int_{0}^{1}\pa{2}z(\psi)(t-1)dtds\cdot\int_{0}^{1}\pa{}z(\alpha)+\pa{}z(\phi)dt}{\abs{\pa{}z(\alpha)}^{2}\abs{\Delta z}^{2}}
\end{displaymath} we split
\begin{align*}
&S_{1}^{12}=\frac{1}{\pi}PV\int_{\R}(\beta\int_{0}^{1}\pa{2}z_{t}(\alpha-\beta+\beta t)^{\bot})dt\cdot\pa{}z(\alpha)B(\alpha,\beta)\varpi_{1}(\alpha)d\beta\\
&+\frac{1}{\pi}PV\int_{\R}\frac{(\pa{}z_{t}(\alpha)-\pa{}z_{t}(\alpha-\beta))^{\bot}\cdot\pa{}z(\alpha)}{\beta^{2}\abs{\pa{}z(\alpha)}^{2}}\varpi_{1}(\alpha)d\beta\\
&\le\exp C\nor{z,h}^{2}+S_{1}^{121}.
\end{align*}
Here, we have
\begin{align*}
&S_{1}^{121}=\Lambda(\pa{\bot}z_{t})\cdot\pa{}z(\alpha)\frac{\varpi_{1}(\alpha)}{\abs{\pa{}z(\alpha)}^{2}},
\end{align*}
then
\begin{align*}
\abs{S_{1}^{121}}\le\norm{\F}_{L^{\infty}}\norm{z}_{\mathcal{C}^{1}}\norm{\varpi_{1}}_{L^{\infty}}\abs{\Lambda(\pa{}z_{t})}.
\end{align*}
Thus,
\begin{align*}
\norm{S_{1}^{1}}_{L^{2}}\le\exp C\nor{z,h}^{2}\norm{\Lambda(\pa{}z_{t})}_{L^{2}}\le\exp C\nor{z,h}^{2}\norm{\pa{2}z_{t}}_{L^{2}}\le\exp C\nor{z,h}^{2}.
\end{align*}
For $S_{1}^{2}$ we do the same thing,
\begin{align*}
&S_{1}^{2}=\frac{1}{\pi}PV\int_{\R}\beta\int_{0}^{1}\pa{\bot}z_{t}(\alpha-\beta+\beta t)dt\cdot\pa{}z(\alpha)B(\alpha,\beta)\pa{}\varpi_{1}(\alpha-\beta)d\beta\\
&+\frac{1}{\pi}PV\int_{\R}\frac{\int_{0}^{1}\pa{\bot}z_{t}(\alpha-\beta+\beta t)dt\cdot\pa{}z(\alpha)}{\beta\abs{\pa{}z(\alpha)}^{2}}\pa{}\varpi_{1}(\alpha-\beta)d\beta\\
&\le\exp c\nor{z,h}^{2}\\
&+\frac{1}{\pi}PV\int_{\R}\frac{\int_{0}^{1}\int_{0}^{1}\pa{2}z_{t}(\alpha-\beta s+\beta ts)^{\bot}(1-t)dsdt\cdot\pa{}z(\alpha)}{\abs{\pa{}z(\alpha)}^{2}}\pa{}\varpi_{1}(\alpha-\beta)d\beta\\
&+\frac{\pa{\bot}z_{t}(\alpha)\cdot\pa{}z(\alpha)}{\abs{\pa{}z(\alpha)}^{2}}H(\pa{}\varpi_{1}).
\end{align*}
Therefore, using (\ref{paz}) and (\ref{pa2z})
\begin{align*}
\norm{S_{1}^{2}}_{L^{2}}\le\exp C\nor{z,h}^{2}.
\end{align*}
For $S_{1}^{3}$ exactly the same as in $S_{1}^{2}$. And for $S_{1}^{4}$, computing:
\begin{align*}
&C(\alpha,\beta)=\frac{1}{\abs{\Delta z}^{4}}-\frac{1}{\beta^{4}\abs{\pa{}z(\alpha)}^{4}}\\
&=\frac{\beta\int_{0}^{1}\int_{0}^{1}\pa{2}z(\psi)(t-1)dsdt\cdot\int_{0}^{1}\pa{}z(\alpha)+\pa{}z(\phi)dt\int_{0}^{1}\abs{\pa{}z(\alpha)}^{2}+\abs{\pa{}z(\phi)}^{2}dt}{\abs{\Delta z}^{4}\abs{\pa{}z(\alpha)}^{4}}
\end{align*}
where $\psi=\alpha-\beta+\beta t+\beta s-\beta ts$ and $\phi=\alpha-\beta+\beta t$. We have,
\begin{align*}
&S_{1}^{4}=-\frac{2}{\pi}PV\int_{\R}\beta^{3}\int_{0}^{1}\pa{}z_{t}(\phi)^{\bot}dt\cdot\pa{}z(\alpha)\int_{0}^{1}\pa{}z(\phi)dt\cdot\int_{0}^{1}\pa{2}z(\phi)dtC(\alpha,\beta)\varpi_{1}(\alpha-\beta)d\beta\\
&-\frac{2}{\pi}PV\int_{\R}\frac{\int_{0}^{1}\pa{}z_{t}(\phi)^{\bot}dt\cdot\pa{}z(\alpha)\int_{0}^{1}\pa{}z(\phi)dt\cdot\int_{0}^{1}\pa{2}z(\phi)dt}{\beta\abs{\pa{}z(\alpha)}^{4}}\varpi_{1}(\alpha-\beta)d\beta\\
&\le\exp C\nor{z,h}^{2}+S_{1}^{41}.
\end{align*}
It is easy to get
\begin{align*}
S_{1}^{41}\le\exp C\nor{z,h}^{2}-2\frac{\pa{\bot}z_{t}(\alpha)\cdot\pa{}z(\alpha)\pa{}z(\alpha)\cdot\pa{2}z(\alpha)}{\abs{\pa{}z(\alpha)}^{4}}H(\varpi_{1}).
\end{align*}
Then,
\begin{displaymath}
\norm{S_{1}^{4}}_{L^{2}}\le\exp C\nor{z,h}^{2}.
\end{displaymath}
Therefore, $\norm{\pa{}S_{1}}_{L^{2}}\le\exp C\nor{z,h}^{2}$.
 We have controlled $\norm{\pa{}S_{2}}_{L^{2}}+\norm{\pa{}S_{3}}_{L^{2}}$ in the same way.
Now let us estimate $\norm{\pa{}R_{2}}_{L^{2}}$. We have,
\begin{align*}
&R_{2}(\varpi_{2})=\frac{1}{\pi}PV\int_{\R}\frac{z^{\bot}_{t}(\alpha)\cdot\pa{}z(\alpha)}{\abs{z(\alpha)-h(\alpha-\beta)}^{2}}\varpi_{2}(\alpha-\beta)d\beta\\
&+\frac{1}{\pi}PV\int_{\R}\frac{(z(\alpha)-h(\alpha-\beta))^{\bot}\cdot\pa{}z_{t}(\alpha)}{\abs{z(\alpha)-h(\alpha-\beta)}^{2}}\varpi_{2}(\alpha-\beta)d\beta\\
&-\frac{2}{\pi}PV\int_{\R}\frac{(\Delta zh)^{\bot}\cdot\pa{}z(\alpha)\Delta zh\cdot z_{t}(\alpha)}{\abs{z(\alpha)-h(\alpha-\beta)}^{4}}\varpi_{2}(\alpha-\beta)d\beta\\
&\equiv S_{4}+S_{5}+S_{6}.
\end{align*}
Then,
\begin{align*}
&S_{4}\le C\norm{d(z,h)}_{L^{\infty}}\norm{z}_{\mathcal{C}^{1}}\norm{z_{t}}_{L^{2}}\norm{\varpi_{1}}_{L^{2}},\\
&S_{5}\le C\norm{d(z,h)}_{L^{\infty}}^{\frac{1}{2}}\norm{z}_{\mathcal{C}^{1}}\norm{\varpi_{1}}_{L^{2}},\\
&S_{6}\le C\norm{d(z,h)}^{\frac{1}{2}}_{L^{\infty}}\norm{z}_{\mathcal{C}^{1}}\norm{z_{t}}_{L^{2}}\norm{\varpi_{1}}_{L^{2}}.
\end{align*}
In conclusion,
\begin{displaymath}
\norm{\pa{}R_{2}(\varpi_{2})}_{L^{2}}\le\exp C\nor{z,h}^{2}.
\end{displaymath}
Moreover, $\pa{}R_{3}$ is like $\pa{}R_{2}$ changing $z$ with $h$, then $\norm{\pa{}R_{3}(\varpi_{1})}_{L^{2}}\le\exp C\nor{z,h}^{2}$. Thus, we have controlled $\norm{\partial_{t}\varpi_{1}}_{L^{2}}$ and $\norm{\partial_{t}\varpi_{2}}_{L^{2}}$.
Finally, in order to control $\norm{\partial_{t}\varpi_{1}}_{\mathcal{C}^{\delta}}$ we will use 
\begin{align*}
\norm{\partial_{t}\varpi_{1}}_{\mathcal{C}^{\delta}}\le C(\norm{T_{1}(\partial_{t}\varpi_{1})}_{\mathcal{C}^{\delta}}+\norm{T_{2}(\partial_{t}\varpi_{2})}_{\mathcal{C}^{\delta}}+\norm{R_{1}(\varpi_{1})}_{\mathcal{C}^{\delta}}+\norm{R_{2}(\varpi_{2})}_{\mathcal{C}^{\delta}}+\norm{\pa{}z_{t}}_{\mathcal{C}^{\delta}}).
\end{align*}
Using the Lemma \ref{tl2h1},
\begin{align*}
&\norm{T_{1}(\varpi_{1})}_{\mathcal{C}^{\delta}}\le\norm{T_{1}(\varpi_{1})}_{H^{1}}\le C\norm{\F}^{2}_{L^{\infty}}\norm{z}^{4}_{\mathcal{C}^{2,\delta}}\norm{\partial_{t}\varpi_{1}}_{L^{2}},\\
&\norm{T_{2}(\varpi_{2})}_{\mathcal{C}^{\delta}}\le\norm{T_{2}(\varpi_{2})}_{H^{1}}\le C\norm{d(z,h)}^{2}_{L^{\infty}}\norm{h}^{4}_{\mathcal{C}^{2,\delta}}\norm{\partial_{t}\varpi_{2}}_{L^{2}},\\
\end{align*}
for $\delta\le\frac{1}{2}$.
We have already seen $\norm{R_{1}(\varpi_{1})}_{H^{1}}+\norm{R_{2}(\varpi_{2})}_{H^{1}}\le\exp C\nor{z,h}^{2}$ then
\begin{displaymath}
\norm{R_{1}(\varpi_{1})}_{\mathcal{C}^{\delta}}+\norm{R_{2}(\varpi_{2})}_{\mathcal{C}^{\delta}}\le\exp C\nor{z,h}^{2}.
\end{displaymath}
Finally let us observe that $\norm{\pa{}z_{t}}_{\mathcal{C}^{\delta}}\le\norm{z_{t}}_{H^{2}}$ which is controlled by $\norm{\pa{2}z_{t}}_{L^{2}}$.
The upper bound 
\begin{displaymath}
\abs{\sigma_{t}(\alpha,t)}\le\exp C\nor{z,h}^{2}
\end{displaymath}
gives us
\begin{displaymath}
m'(t)\ge -\exp C\nor{z,h}^{2}
\end{displaymath}
for almost every $t$. And a futher integration yields
\begin{displaymath}
m(t)\ge m(0)-\int_{0}^{t}\exp C\nor{z,h}^{2}(s)ds.
\end{displaymath}
\end{proof}

\section{Conclusion of the Local-existence}
\label{regu}
This step is classical, then we only sketch this procedure.
We regularize the problem as follows:
\begin{align*}
&z^{\varepsilon}_{t}(\alpha,t)=\brzep{z^{\varepsilon}}(\alpha,t)+\brhep{z^{\varepsilon}}(\alpha,t)+c^{\varepsilon}(\alpha,t)\partial_{\alpha}z^{\varepsilon}(\alpha,t)\\
&z^{\varepsilon}(\alpha,0)=\phi_{\varepsilon}*z_{0}(\alpha)\\
\end{align*}
where
\begin{align*}
&c^{\varepsilon}(\alpha,t)=\frac{\alpha+\pi}{2\pi A^{\varepsilon}(t)}\int_{\T}\partial_{\alpha}z^{\varepsilon}(\beta,t)\cdot\partial_{\alpha}(\brzep{z^{\varepsilon}}+\brhep{z^{\varepsilon}})d\beta\\
&-\int_{-\pi}^{\alpha}\frac{\partial_{\alpha}z^{\varepsilon}(\beta,t)}{A^{\varepsilon}(t)}\cdot\partial_{\alpha}(\brzep{z^{\varepsilon}}+\brhep{z^{\varepsilon}})d\beta,\\
&\varpi_{1}^{\varepsilon}(\alpha,t)=-2\frac{\mu^{2}-\mu^{1}}{\mu^{2}+\mu^{1}}\phi_{\varepsilon}*\phi_{\varepsilon}*(\brzep{z^{\varepsilon}}+\brhep{z^{\varepsilon}})\cdot\partial_{\alpha}z^{\varepsilon}(\alpha,t)\\
&-2\kappa^{1}\frac{\rho^{2}-\rho^{1}}{\mu^{2}+\mu^{1}}g\phi_{\varepsilon}*\phi_{\varepsilon}*\partial_{\alpha}z^{\varepsilon}_{2}(\alpha,t)\\
&\varpi_{2}(\alpha,t)=-2\frac{\kappa^{2}-\kappa^{1}}{\kappa^{2}+\kappa^{1}}\phi_{\varepsilon}*\phi_{\varepsilon}*(\brzep{h^{\varepsilon}}+\brhep{h^{\varepsilon}})\cdot\partial_{\alpha}h^{\varepsilon}(\alpha)
\end{align*}
for $\phi\in\mathcal{C}^{\infty}_{c}$, $\phi(\alpha)\ge 0$, $\phi(-\alpha)=\phi(\alpha)$, $\int_{\R}\phi(\alpha)d\alpha=1$ and $\phi_{\varepsilon}(\alpha)=\phi(\frac{\alpha}{\varepsilon})/\varepsilon$.
Using the same techniques that in the above secction, we can prove that:
\begin{align*}
&\frac{d}{dt}\norm{z^{\varepsilon}}^{2}_{H^{k}}(t)\le\estheps{k}\\
&-\frac{\kappa^{1}}{2\pi(\mu^{2}+\mu^{1})}\int_{\T}\frac{\sigma^{\varepsilon}(\alpha,t)}{A^{\varepsilon}(t)}\phi_{\varepsilon}*\pa{k}z^{\varepsilon}\cdot\Lambda(\phi_{\varepsilon}*\pa{k}z^{\varepsilon})d\alpha
\end{align*}
where
\begin{align*}
\sigma^{\varepsilon}(\alpha,t)=\frac{\mu^{2}-\mu^{1}}{\kappa^{1}}(\brzep{z^{\varepsilon}}+\brhep{z^{\varepsilon}})\cdot\pa{\bot}z^{\varepsilon}(\alpha,t)+g(\rho^{2}-\rho^{1})\pa{}z^{\varepsilon}(\alpha,t)
\end{align*}

The next step is to integrate during a time $T$ independent of $\varepsilon$. Let us observe that if $\phi_{\varepsilon}*z_{0}(\alpha)\in H^{k}$, then we have the solution $z^{\varepsilon}\in\mathcal{C}^{1}([0,T^{\varepsilon}],H^{k})$. If $\sigma(\alpha,0)>0$, there exists $T^{\varepsilon}$ dependent of $\varepsilon$ where $\sigma^{\varepsilon}(\alpha,t)>0$. Then for $t\le T^{\varepsilon}$ with our a priori estimates and the fact that
\begin{displaymath}
f(\alpha)\Lambda f(\alpha)-\frac{1}{2}\Lambda(f^{2})(\alpha)\ge 0
\end{displaymath}
we get
\begin{align*}
&\frac{d}{dt}\norm{z^{\varepsilon}}^{2}_{H^{k}}(t)\le\estheps{k}\\
&-\frac{\kappa^{1}}{4\pi(\mu^{2}+\mu^{1})A^{\varepsilon}(t)}\int_{\T}\sigma^{\varepsilon}(\alpha,t)\Lambda(\abs{\phi_{\varepsilon}*\pa{k}z^{\varepsilon}}^{2})(\alpha)d\alpha.
\end{align*}
Since 
\begin{align*}
&\norm{\Lambda\sigma^{\varepsilon}}_{L^{\infty}}\le C\norm{\sigma^{\varepsilon}}_{H^{2}}\le C(\norm{\brzep{z^{\varepsilon}}+\brhep{z^{\varepsilon}}}_{L^{2}}\\
&+\norm{\pa{2}\brzep{z^{\varepsilon}}+\pa{2}\brhep{z^{\varepsilon}}}_{L^{2}}+1)\norm{z^{\varepsilon}}_{H^{3}},
\end{align*}
then
\begin{displaymath}
\frac{d}{dt}\norm{z^{\varepsilon}}^{2}_{H^{k}}(t)\le\estheps{k}.
\end{displaymath}
In the same way as in \ref{evoldi} we can check that
\begin{align*}
\frac{d}{dt}\norm{\mathcal{F}(z^{\varepsilon})}_{L^{\infty}}^{2}\le\estheps{k}.
\end{align*}
And we have
\begin{align*}
\frac{d}{dt}\norm{d(z^{\varepsilon},h^{\varepsilon})}_{L^{\infty}}^{2}\le\estheps{k}.
\end{align*}
Therefore,
\begin{align*}
\frac{d}{dt}(\norm{\mathcal{F}(z^{\varepsilon})}_{L^{\infty}}^{2}+\norm{d(z^{\varepsilon},h^{\varepsilon})}_{L^{\infty}}^{2}+\norm{z^{\varepsilon}}^{2}_{H^{k}})\le\estheps{k}.
\end{align*}
Integrating,
\begin{align*}
&\norm{\mathcal{F}(z^{\varepsilon})}_{L^{\infty}}^{2}+\norm{d(z^{\varepsilon},h^{\varepsilon})}_{L^{\infty}}^{2}+\norm{z^{\varepsilon}}^{2}_{H^{k}}\\
&\le -\frac{1}{C}\ln(-t+\exp(-C(\norm{\mathcal{F}(z_{0})}_{L^{\infty}}^{2}+\norm{d(z_{0},h)}_{L^{\infty}}^{2}+\norm{z_{0}}^{2}_{H^{k}}))).
\end{align*}
Since $m^{\varepsilon}(t)\ge m(0)-\int_{0}^{t}\estheps{k}(s)ds$, where $m^{\varepsilon}(t)=\min_{\alpha\in\T}\sigma^{\varepsilon}(\alpha,t)$ for $t\le T^{\varepsilon}$, using the above estimations
\begin{align*}
&m^{\epsilon}(t)\ge m(0)+C(\norm{\mathcal{F}(z_{0})}_{L^{\infty}}^{2}+\norm{d(z_{0},h)}_{L^{\infty}}^{2}+\norm{z_{0}}^{2}_{H^{k}})\\
&+\ln(-t+\exp(-C(\norm{\mathcal{F}(z_{0})}_{L^{\infty}}^{2}+\norm{d(z_{0},h)}_{L^{\infty}}^{2}+\norm{z_{0}}^{2}_{H^{k}})))
\end{align*}
for $t\le T^{\varepsilon}$.
Now if we $\varepsilon\to 0$ we have $T^{\varepsilon}\nrightarrow 0$. This is because if we take $T=\min(T^{1},T^{2})$ where $T^{1}$ satisfies,
\begin{align*}
&m(0)+C(\norm{\mathcal{F}(z_{0})}_{L^{\infty}}^{2}+\norm{d(z_{0},h)}_{L^{\infty}}^{2}+\norm{z_{0}}^{2}_{H^{k}})\\
&+\ln(-T^{1}+\exp(-C(\norm{\mathcal{F}(z_{0})}_{L^{\infty}}^{2}+\norm{d(z_{0},h)}_{L^{\infty}}^{2}+\norm{z_{0}}^{2}_{H^{k}})))>0
\end{align*}
and $T_{2}$ satisfies,
\begin{align*}
-\frac{1}{C}\ln(-T^{2}+\exp(-C(\norm{\mathcal{F}(z_{0})}_{L^{\infty}}^{2}+\norm{d(z_{0},h)}_{L^{\infty}}^{2}+\norm{z_{0}}^{2}_{H^{k}})))<\infty.
\end{align*}
For $t\le T$ we have $m^{\varepsilon}(t)>0$ and
\begin{align*}
&\norm{\mathcal{F}(z^{\varepsilon})}_{L^{\infty}}^{2}+\norm{d(z^{\varepsilon},h)}_{L^{\infty}}^{2}+\norm{z^{\varepsilon}}^{2}_{H^{k}}\\
&\le-\frac{1}{C}\ln(-T^{2}+\exp(-C(\norm{\mathcal{F}(z_{0})}_{L^{\infty}}^{2}+\norm{d(z_{0},h)}_{L^{\infty}}^{2}+\norm{z_{0}}^{2}_{H^{k}})))<\infty 
\end{align*}
and $T$ only depend on $z_{0}$. Then, we have local existence when $\varepsilon\to 0$.

\bibliography{mibiblioteca}

\begin{thebibliography}{10}

\bibitem{localsolv}
Luigi~C. Berselli, Diego C{\'o}rdoba, and Rafael Granero-Belinch{\'o}n.
\newblock Local solvability and turning for the inhomogeneous {M}uskat problem.
\newblock {\em Interfaces Free Bound.}, 16(2):175--213, 2014.

\bibitem{splash}
Angel Castro, Diego C\'ordoba, Charles Fefferman, and Francisco Gancedo.
\newblock Splash singularities for the one-phase {M}uskat problem in stable
  regimes.
\newblock {\em Arch. Ration. Mech. Anal.}, 222(1):213--243, 2016.

\bibitem{raytay}
{\'A}ngel Castro, Diego C{\'o}rdoba, Charles Fefferman, Francisco Gancedo, and
  Mar{\'{\i}}a L{\'o}pez-Fern{\'a}ndez.
\newblock Rayleigh-{T}aylor breakdown for the {M}uskat problem with
  applications to water waves.
\newblock {\em Ann. of Math. (2)}, 175(2):909--948, 2012.

\bibitem{granerh2}
C.~H.~Arthur Cheng, Rafael Granero-Belinch{\'o}n, and Steve Shkoller.
\newblock Well-posedness of the {M}uskat problem with {$H^2$} initial data.
\newblock {\em Adv. Math.}, 286:32--104, 2016.

\bibitem{onglobal2d3d}
Peter Constantin, Diego C{\'o}rdoba, Francisco Gancedo, Luis
  Rodr{\'i}guez-Piazza, and Robert~M. Strain.
\newblock On the {M}uskat problem: global in time results in 2{D} and 3{D}.
\newblock {\em Amer. J. Math.}, 138(6):1455--1494, 2016.

\bibitem{global}
Peter Constantin, Diego C{\'o}rdoba, Francisco Gancedo, and Robert~M. Strain.
\newblock On the global existence for the {M}uskat problem.
\newblock {\em J. Eur. Math. Soc. (JEMS)}, 15(1):201--227, 2013.

\bibitem{globalregul}
Peter Constantin, Francisco Gancedo, Roman Shvydkoy, and Vlad Vicol.
\newblock Global regularity for 2{D} {M}uskat equations with finite slope.
\newblock {\em Ann. I. H. Poincaré-AN}, 2016.

\bibitem{hele}
Antonio C{\'o}rdoba, Diego C{\'o}rdoba, and Francisco Gancedo.
\newblock Interface evolution: the {H}ele-{S}haw and {M}uskat problems.
\newblock {\em Ann. of Math. (2)}, 173(1):477--542, 2011.

\bibitem{contour}
Diego C{\'o}rdoba and Francisco Gancedo.
\newblock Contour dynamics of incompressible 3-{D} fluids in a porous medium
  with different densities.
\newblock {\em Comm. Math. Phys.}, 273(2):445--471, 2007.

\bibitem{para}
Joachim Escher and Bogdan-Vasile Matioc.
\newblock On the parabolicity of the {M}uskat problem: well-posedness,
  fingering, and stability results.
\newblock {\em Z. Anal. Anwend.}, 30(2):193--218, 2011.

\bibitem{Granero}
Rafael Granero-Belinch\'{o}n.
\newblock Global existence for the confined muskat problem.
\newblock {\em {SIAM} J. Math. Analysis}, 46(2):1651--1680, 2014.

\bibitem{ray}
Lord Rayleigh.
\newblock On {T}he {I}nstability {O}f {J}ets.
\newblock {\em Proc. London Math. Soc.}, S1-10(1):4.

\bibitem{tay}
P.~G. Saffman and Geoffrey Taylor.
\newblock The penetration of a fluid into a porous medium or {H}ele-{S}haw cell
  containing a more viscous liquid.
\newblock {\em Proc. Roy. Soc. London. Ser. A}, 245:312--329. (2 plates), 1958.

\bibitem{global2}
Michael Siegel, Russel~E. Caflisch, and Sam Howison.
\newblock Global existence, singular solutions, and ill-posedness for the
  {M}uskat problem.
\newblock {\em Comm. Pure Appl. Math.}, 57(10):1374--1411, 2004.

\end{thebibliography}
\bibliographystyle{plain}
\end{document}